\newtheorem{theorem}{Theorem}[section]
\newtheorem{lemma}[theorem]{Lemma}
\newtheorem{fact}[theorem]{Fact}
\newtheorem{cor}[theorem]{Corollary}
\newtheorem{ques}[theorem]{Question}
\title{Partition of Sparse Graphs into Two Forests with Bounded Degree}
\author{Matthew P. Yancey \thanks{Institute for Defense Analyses / Center for Computing Sciences (IDA / CCS), mpyance@super.net}}
\begin{document}
\maketitle

\begin{abstract}
Borodin and Kostochka proved that for $d_2 \geq 2d_1+2$ and a graph $G$ where every subgraph $H$ satisfies 
$$ e(H) < \left(2 - \frac{d_2+2}{(d_1+2)(d_2+1)}\right)n(H) + \frac{1}{d_2+1} $$
has a vertex partition $V(G) = V_1 \cup V_2$ such that $G[V_i]$ has maximum degree at most $d_i$ for each $i$.
We show that under the same conditions we can additionally conclude that each $G[V_i]$ is a forest.
\end{abstract}

\section{Introduction} \label{introduction section}

Let $\mathcal{G}^{(i)}$ denote a family of graphs.
A $(\mathcal{G}^{(1)}, \mathcal{G}^{(2)}, \ldots, \mathcal{G}^{(k)})$-coloring of a graph $G$ is a partition $V(G) = \cup_{i=1}^k V_i$ where $G[V_i] \in \mathcal{G}^{(i)}$ for each $i$.
We primarily focus on three classes of graph families: $\Delta_t$, which is graphs with vertex degrees bounded by $t$; $\mathcal{F}$, which is the family of forests; and $F_i = \mathcal{F} \cap \Delta_i$.
Observe that $\Delta_0 = F_0$ is the set of empty graphs, $\Delta_1 = F_1$ is the set of matchings, and $F_2$ is set of linear forests.
A $(\Delta_{i_1}, \Delta_{i_2}, \ldots, \Delta_{i_k})$-coloring is called a defective coloring.

Appel and Haken \cite{AppelHaken1977,AppelHaken1977_2} proved that every planar graph has a $(\Delta_0, \Delta_0, \Delta_0, \Delta_0)$-coloring in a result known as the Four Color Theorem.
Cowen, Cowen, and Woodall \cite{CowenCowenWoodall1986} proved that every planar graph has a $(\Delta_2, \Delta_2, \Delta_2)$-coloring.
This was improved independently by Poh \cite{Poh1990} and Goddard \cite{Goddard1991} who showed that every planar graph has a $(F_2, F_2, F_2)$-coloring, answering a question of Broere and Mynhardt \cite{BroereMynhardt1984}.
Outerplanar graphs are 2-degenerate, and therefore have a $(\Delta_0, \Delta_0, \Delta_0)$-coloring.
Akiyama, Era, Gervacio, and Watanabe \cite{AkiyamaEraGervacioWatanabe1989} and Broere and Mynhardt \cite{BroereMynhardt1984} showed that outerplanar graphs have a $(F_2, F_2)$-coloring.

Lov\'{a}sz \cite{Lovasz1966} proved that graphs in $\Delta_{-1 + \sum_{i=1}^k (d_i + 1)}$ have $(\Delta_{d_1}, \Delta_{d_2}, \ldots, \Delta_{d_k})$-coloring.
Borodin \cite{Borodin1976} and independently Bollob\'{a}s and Manvel \cite{BollobasManvel1979} showed that every graph in $\Delta_{2t}$ that does not contain $K_{2t+1}$ has a $(F_2, F_2, \ldots, F_2)$-coloring (it has $t$ distinct colors).

We are interested in sparsity conditions that imply a coloring exists.
For a graph $G$, let $e(G) = |E(G)|$ and $n(G) = |V(G)|$.
A graph $G$ is $(a,b)$-sparse if for every subgraph $H \subseteq G$ we have $e(H) \leq a n(H) - b$.
It is $(a,b)$-strictly sparse when $e(H) < a n(H) - b$ for each $H \subseteq G$.
A graph is $(a,b)$-tight if $e(G) = a n(G) - b$, while every proper subgraph of $G$ is $(a,b)$-strictly sparse. 
One of the strongest known results about $(\Delta_j, \Delta_k)$-coloring comes from a sparsity condition.
We say that a graph is $(\mathcal{G}^{(1)}, \mathcal{G}^{(2)})$-critical if it does not have an $(\mathcal{G}^{(1)}, \mathcal{G}^{(2)})$-coloring, but every proper subgraph does.

\begin{theorem}[Borodin and Kostochka \cite{BorodinKostochka2014}]\label{Borodin Kostochka theorem}

\begin{sloppypar}
Let $d_2 \geq 2d_1+2$.
If $G$ is $\left(2 - \frac{d_2+2}{(d_1+2)(d_2+1)}, \frac{-1}{d_2+1}\right)$-strictly sparse graph, then it has a $\left(\Delta_{d_1}, \Delta_{d_2}\right)$-coloring.
Moreover, there exists infinitely many $\left(\Delta_{d_1}, \Delta_{d_2}\right)$-critical graphs that are $\left(2 - \frac{d_2+2}{(d_1+2)(d_2+1)}, \frac{-1}{d_2+1}\right)$-tight.
\end{sloppypar}
\end{theorem}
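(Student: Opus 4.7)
The plan is the standard potential/discharging framework for critical graphs, together with an explicit construction for the moreover part. Write $\alpha := 2 - \frac{d_2+2}{(d_1+2)(d_2+1)}$, and introduce the potential $\rho(H) := \alpha\,n(H) - e(H)$; the sparsity hypothesis reads $\rho(H) > -\frac{1}{d_2+1}$ for every subgraph $H$. Suppose for contradiction that $G$ is a vertex-minimum $(\Delta_{d_1},\Delta_{d_2})$-critical graph satisfying this bound, so that every proper subgraph admits a $(\Delta_{d_1},\Delta_{d_2})$-coloring.

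The first phase forbids a list of local configurations in $G$ via reducibility lemmas. The simplest such lemma is that no vertex $v$ has $d(v) \leq d_1$: delete $v$, color $G-v$, and put $v$ in $V_1$. Stronger reductions handle vertices of degree up to about $d_1+d_2+1$ by deleting $v$ and extending a coloring of $G-v$ via Kempe-style $V_1 \leftrightarrow V_2$ swaps at individual neighbors; the hypothesis $d_2 \geq 2d_1+2$ is exactly what guarantees each swap preserves both degree constraints. A short additional list (two adjacent low-degree vertices, a path of three such vertices, and analogous small configurations) rounds out the reducible structures.

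Next I would run a discharging argument using the initial charge $\mathrm{ch}(v) := d(v) - 2\alpha$, so that the total charge equals $2e(G) - 2\alpha\,n(G) = -2\rho(G)$; sparsity forces this total to be strictly less than $\tfrac{2}{d_2+1}$. Rules move charge from heavy vertices (those of degree at least $d_1+d_2+2$) toward their lighter neighbors along edges avoiding the forbidden configurations. The goal is to show after discharging that every vertex has nonnegative final charge and that at least one vertex has final charge $\geq \tfrac{2}{d_2+1}$, contradicting the total bound. Tuning the discharging weights so the local deficit at every vertex of degree in the range $(d_1,d_1+d_2+1]$ is exactly covered by its permitted heavy neighbors is the main technical obstacle; the coefficient $\frac{d_2+2}{(d_1+2)(d_2+1)}$ in $\alpha$ should arise naturally from solving the local balance system, and most of the work lies in verifying this balance in every surviving configuration.

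For the moreover clause I would build an infinite family of tight critical graphs by first identifying one small $(\Delta_{d_1},\Delta_{d_2})$-critical gadget $H_0$ of average degree just below $2\alpha$ that already achieves $\rho(H_0) = -\tfrac{1}{d_2+1}$, and then producing larger members by gluing copies of $H_0$ along a fixed vertex interface so that each new copy contributes exactly $\alpha$ to the potential per added vertex while still blocking a coloring. A natural first candidate is a carefully proportioned bipartite or near-bipartite gadget whose parts are sized in terms of $d_1$ and $d_2$ so that any attempted partition exceeds one of the two degree budgets. The most delicate step is verifying that every proper subgraph of the composite remains strictly sparse, not merely $G$ itself; I would handle this by induction on the number of glued copies, showing that any proper subgraph either removes at least one entire copy (reducing to a smaller member of the family, hence strictly sparse by induction) or lies inside a single copy whose subgraph potentials can be checked by hand.
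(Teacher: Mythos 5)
This statement is quoted from Borodin and Kostochka; the paper does not reprove it but instead proves the stronger forest version by the potential method, and it describes the tight examples explicitly in Section~\ref{arithmetic and constructions section}. Measured against that machinery, your sketch is missing the one ingredient that makes the sharp density achievable: the gap lemma. Your reducibility step is ``delete $v$, color $G-v$, extend by Kempe-style swaps,'' but for the sharp bound the extension arguments require the colored smaller graph to be not $G-v$ itself but a \emph{modified} graph (extra weights/capacity restrictions on the neighbors of $v$, added edges, or attached gadgets encoding that those neighbors must tolerate one more same-colored neighbor). Such modified graphs are not subgraphs of $G$, so the sparsity hypothesis does not apply to them directly; one needs to know that every proper subgraph of a minimal counterexample has potential bounded \emph{strictly above} the critical threshold (in the paper's normalization, $\rho(H)>\alpha-\beta$ rather than merely $>-\beta$), so that the modification can ``spend'' up to $\alpha$ (or $1$) of potential and still leave a colorable graph. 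Without this gap, your induction does not close: plain deletion plus swaps, followed by discharging, yields only a weaker density than $2-\frac{d_2+2}{(d_1+2)(d_2+1)}$. Relatedly, your claim that $d_2\geq 2d_1+2$ is ``exactly what guarantees each swap preserves both degree constraints'' is not where the hypothesis actually enters; in the potential-method proof it is an arithmetic fact ($\alpha\geq 2\beta$) used in one specific reduction (the degree-two configuration), and the discharging itself trades charge among low-degree constrained vertices, not from vertices of degree $\geq d_1+d_2+2$.

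The ``moreover'' clause is also not substantiated. The known tight critical graphs are not obtained by gluing copies of a single gadget at a vertex interface; they are built recursively from a trivially critical seed by two precise operations --- appending pendant vertices that absorb one unit of $W_1$-weight, and attaching ``flags'' ($K_{1,d_1+1}$'s joined to vertices carrying $W_2$-weight) --- with weight bookkeeping chosen so that each operation changes the potential by exactly zero and preserves criticality. Your proposed gluing would have to be checked both for criticality (no coloring of the composite, every proper subgraph colorable) and for strict sparsity of all proper subgraphs, and gluing at a single vertex generically fails the first of these: a coloring of one copy need not conflict with a coloring of the other unless the interface vertex is forced into both color classes simultaneously, which is precisely what the weighted pendant/flag constructions are engineered to arrange. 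So both halves of your outline need the weighted-potential framework (or an equivalent) to be made rigorous.
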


Our main contribution is to show that the assumptions of Theorem \ref{Borodin Kostochka theorem} also imply a $(F_{d_1}, F_{d_2})$-coloring.
The inspiration for this project is that it is already known to be true in some special cases.
Dross, Montassier, and Pinlou \cite{DrossMontassierPinlou2018} proved that a $(\frac{3}{2}-\frac{1}{2d_2}, 0)$-strictly sparse graph has a $(F_0, F_{d_2})$-coloring when $d_2 \geq 3$.
Chen, Yu, and Wang \cite{ChenYuWang2018} improved this to $(\frac{3}{2} - \frac{1}{2(d_2+1)}, \frac{-1}{d_2+1})$-strictly sparse graphs having a $(F_0, F_{d_2})$-coloring when $d_2 \geq 2$, which matches Borodin and Kostochka's formula when $d_1=0$.
Chen, Raspaud, and Yu \cite{ChenRaspaudYu2022} proved that a $(1.6,0)$-sparse graph has a $(F_1, F_4)$-coloring (observe that $(\frac{8}{5}, 0)$-sparse is equivalent to Borodin and Kostochka's formula of $(\frac{8}{5}, \frac{-1}{5})$-strictly sparse for $(\Delta_1, \Delta_4)$-coloring).

\begin{theorem}\label{main theorem}
Let $d_2 \geq 2 d_1 + 2$.
If $G$ is a $(F_{d_1}, F_{d_2})$-critical multigraph without loops, then 
$$e(G) \geq \left(2 - \frac{d_2 + 2}{(d_1 + 2)(d_2 + 1)}\right)n(G) + \frac{1}{d_2 + 1}.$$
Moreover, if $G$ is a $\left(2 - \frac{d_2+2}{(d_1+2)(d_2+1)}, \frac{-1}{d_2+1}\right)$-strictly sparse multigraph without loops, then a $(F_{d_1}, F_{d_2})$-coloring of $G$ can be found in polynomial time.
\end{theorem}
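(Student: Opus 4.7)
The plan is to prove the inequality via a discharging argument on a minimum counterexample, after which the algorithmic ``moreover'' statement will follow. Suppose for contradiction that $G$ is a $(F_{d_1}, F_{d_2})$-critical multigraph minimizing $n(G)+e(G)$ and violating the edge bound, and set $c = 2 - \frac{d_2+2}{(d_1+2)(d_2+1)}$. Assigning each vertex $v$ the initial charge $\mu(v) = d(v) - 2c$ gives $\sum_v \mu(v) = 2e(G) - 2cn(G) < \frac{2}{d_2+1}$. The aim is to redistribute charge locally, using the structure forced by criticality, so that final charges sum to at least $\frac{2}{d_2+1}$, a contradiction.

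The core of the argument is a catalog of reducible configurations: small substructures which, if they appeared in $G$, would allow us to color $G-S$ for a small vertex set $S$ and extend the coloring to $S$, contradicting criticality. Because the target sparsity bound coincides with that of Theorem \ref{Borodin Kostochka theorem}, I expect the reducible configurations to be in essential correspondence with those of Borodin and Kostochka: a minimum degree lower bound of roughly $d_1+d_2+1$, restrictions on how many low-degree vertices may share a common neighbor, and forbidden multi-edge patterns involving low-degree vertices. Discharging rules of the form ``each high-degree vertex sends charge $\alpha$ to each low-degree neighbor'' can then be tuned so that the forbidden configurations are exactly the ones that would otherwise leave a vertex with negative final charge.

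The principal obstacle, and the main novelty compared to Borodin and Kostochka, is the forest constraint. To extend a partial coloring by placing a vertex $v$ into color class $i$, we need not only $|N(v) \cap V_i| \leq d_i$ but also that the neighbors of $v$ in $V_i$ lie in pairwise distinct components of $G[V_i]$, so that no cycle is created. This complicates the Kempe-type recolorings used in standard reducibility proofs, because swapping colors along an alternating path can merge or split tree components. I anticipate addressing this by strengthening each reducibility lemma to track the component structure around the configuration in addition to degrees, and, in the harder cases, by considering partial colorings of $G-S$ with $|S|\geq 2$ and performing careful local recolorings that preserve acyclicity after reinsertion. Finally, the polynomial-time algorithm for strictly sparse $G$ is a direct consequence: under strict sparsity, no subgraph of $G$ is $(F_{d_1}, F_{d_2})$-critical, so each reducible configuration in the proof can be searched for in polynomial time and applied as a recursive step, and the reduction strictly decreases $n(G)+e(G)$, giving an efficient constructive coloring.
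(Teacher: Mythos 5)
There is a genuine gap, and it starts with your guess about what criticality buys you. For defective (and forest-defective) colorings, deleting a vertex $v$ and trying to reinsert it is blocked not only by $v$'s own budget but by the defect budgets of its neighbors: a neighbor already having $d_i$ neighbors in $V_i$ cannot absorb $v$. Consequently $(F_{d_1},F_{d_2})$-critical graphs do \emph{not} have minimum degree near $d_1+d_2+1$; the sharp examples (the ``flag'' constructions of Borodin--Kostochka) are full of degree-$2$ and degree-$3$ vertices, and the paper's reducible configurations are precisely about such vertices, classified by their remaining \emph{capacities} rather than by degree. More importantly, your reducibility scheme ``color $G-S$ and extend, with Kempe-type fixes for acyclicity'' cannot be carried out as stated: to make the extension work one must record, on the neighbors of the deleted configuration, the constraints it leaves behind. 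The paper does this by proving a stronger, weighted statement (Theorem~\ref{central goal}, with weight functions $W_1,W_2$ and capacities), and the engine that makes the weighted reductions legal is the gap lemma (Lemma~\ref{gap lemma}): every nonempty proper subgraph of a minimal counterexample has potential strictly above $\alpha-\beta$, so one may increase weights on a few boundary vertices and still be guaranteed a coloring. A plain minimum-counterexample discharging with initial charge $d(v)-2c$ and purely local rules, without this potential-method strengthening, has no way to justify the reductions at the tight density $2-\frac{d_2+2}{(d_1+2)(d_2+1)}$; the subtraction of subgraph potentials (not just local charge bookkeeping) is what absorbs the boundary edges in the key lemmas (e.g.\ Lemmas~\ref{degree 3 not incident with parallel edge} and~\ref{triple three next to two somehow-constrained}). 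The forest constraint itself turns out to be the easy part: since the reinserted vertices have degree at most $3$, they lie in at most one monochromatic edge, and one reduction adds the edge $x_2x_3$ to forbid monochromatic cycles through $u$; no component tracking or alternating-path recoloring is needed.

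The algorithmic claim is also not a ``direct consequence'' of searching for reducible configurations. One of the unavoidable cases in the recursion is the gap-lemma case: the graph contains a nonempty proper (weighted) subgraph $H$ of small potential, and the algorithm must \emph{find} a potential-minimizing such $H$, color it recursively, and then recurse on a residual weighted graph $G^*$ whose weights encode the coloring of $H$. This requires a polynomial-time subroutine for minimizing the potential over nonempty non-spanning subgraphs (the paper imports Theorem~\ref{gap lemma algorithm} from earlier work), and it requires that all intermediate instances be weighted graphs satisfying the potential hypothesis --- neither of which follows from ``strict sparsity means no subgraph is critical, so search for configurations.'' Your outline would need the weighted generalization, the gap lemma, and the minimum-potential-subgraph algorithm to be turned into a proof.
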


Let $g(G)$ denote the girth of $G$.
It is well-known that planar graphs are $(\frac{g(G)}{g(G)-2},0)$-strictly sparse.
Let $\widehat{G}_\ell$ denote planar graphs with girth $\ell$ with no adjacent $\ell$-cycles.
A trivial discharging argument reveals that graphs in $\widehat{G}_\ell$ are $(\frac{\ell(\ell+1)}{\ell^2-\ell-1},0)$-strictly sparse.
Huang, Huang, and Lv \cite{HuangHuangLv2023} showed that planar graphs without cycles of length $4$ to $i$ are $(\frac{3i-3}{2(i-2)},0)$-strictly sparse.
These statements lead to quick corollaries in topological graph theory from results connecting sparsity to colorings.

For example, Dross, Montassier, and Pinlou \cite{DrossMontassierPinlou2018} showed that planar graph $G$ has a $(F_0, F_5)$-coloring if $g(G) \geq 7$, a $(F_0, F_3)$-coloring if $g(G) \geq 8$, and $(F_0, F_2)$-coloring if $g(G) \geq 10$.
Chen, Yu, and Wang \cite{ChenYuWang2018} improved this to a $(F_0, F_4)$-coloring if $g(G) \geq 7$ and a $(F_0, F_2)$-coloring if $g(G) \geq 8$.
Chen, Raspaud, and Yu \cite{ChenRaspaudYu2022} showed that if $G$ has genus at most $1$ and girth at least $6$, then it has a $(F_1, F_4)$-coloring.
It also follows quickly that if $G \in \widehat{G}_5$, then $G$ has a $(F_1, F_4)$-coloring.
Theorem \ref{main theorem} implies that such colorings can be found in polynomial time.
We also establish similar corollaries.

\begin{cor}\label{implications of main theorem}
Let $G$ be a planar graph.
\begin{itemize}
	\item[(A)] If $g(G) \geq 5$, then $G$ has a $(F_2, F_6)$-coloring.
	\item[(B)] If $G \in \widehat{G}_4$, then $G$ has a $(F_5, F_{12})$-coloring.
	\item[(C)] If $G$ has no cycles of length $4$ to $11$, then $G$ has a $(F_2, F_{6})$-coloring.
\end{itemize}
Moreover, the above colorings can be found in polynomial time.
\end{cor}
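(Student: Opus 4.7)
The plan is to derive all three parts directly from Theorem \ref{main theorem} by matching the sparsity constant built into each planar class against the Borodin--Kostochka threshold $\alpha(d_1,d_2) := 2 - \frac{d_2+2}{(d_1+2)(d_2+1)}$. In each case I would (i) cite the $(a,0)$-strict sparsity of the class from the introduction, (ii) check that the chosen pair $(d_1,d_2)$ satisfies $d_2 \geq 2d_1+2$, and (iii) verify the numerical inequality $a < \alpha(d_1,d_2)$. Whenever these hold, $a\,n(H) < \alpha(d_1,d_2)\,n(H) + \tfrac{1}{d_2+1}$ for every $n(H)\ge 1$, so the $(a,0)$-strict sparsity of $G$ implies the hypothesis of Theorem \ref{main theorem} and hence the desired $(F_{d_1},F_{d_2})$-coloring, produced in polynomial time by the moreover clause.

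For (A), a planar graph with $g(G)\ge 5$ is $(\tfrac{5}{3},0)$-strictly sparse; I would take $(d_1,d_2)=(2,6)$, noting $d_2=2d_1+2$ and $\alpha(2,6) = 2-\tfrac{8}{28} = \tfrac{12}{7}$, then check $\tfrac{5}{3}<\tfrac{12}{7}$ (i.e.\ $35<36$). For (B), the introduction records that $\widehat{G}_4$ is $(\tfrac{20}{11},0)$-strictly sparse; I would take $(d_1,d_2)=(5,12)$, with $\alpha(5,12)=2-\tfrac{14}{91}=\tfrac{24}{13}$, and check $\tfrac{20}{11}<\tfrac{24}{13}$ (i.e.\ $260<264$). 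For (C), I would apply the Huang--Huang--Lv bound with $i=11$ to get $(\tfrac{3\cdot 11 - 3}{2\cdot 9},0) = (\tfrac{5}{3},0)$-strict sparsity, and then repeat the case (A) argument with $(d_1,d_2)=(2,6)$.

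The hardest step is nothing more than bookkeeping: the substance lives entirely in Theorem \ref{main theorem}, and the corollary is a choice of parameters together with three numerical verifications. The one subtlety worth flagging is that the Borodin--Kostochka hypothesis carries the extra additive term $\tfrac{1}{d_2+1}$ on the right side, so one must check that this term is in the direction that helps; since each class is $(a,0)$-strictly sparse rather than $(a, \text{something positive})$-strictly sparse, the gap $a<\alpha(d_1,d_2)$ absorbs this term for free, and the reduction is unconditional.
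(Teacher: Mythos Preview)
The proposal is correct and is exactly the argument the paper has in mind: the corollary is stated without proof in the paper, being an immediate consequence of Theorem~\ref{main theorem} together with the sparsity facts recalled in the introduction, and you have supplied precisely those parameter choices and numerical checks.
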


Choi and Raspaud \cite{ChoiRaspaud2015} asked if planar graphs with girth $5$ have $(\Delta_1, \Delta_7)$-colorings.
As partial progress, Zhang, Chen, and Wang \cite{ZhangChenWang2017} proved that graphs in $\widehat{G}_5$ have $(\Delta_1, \Delta_7)$-colorings.
This result is improved by the above discussion.
Wang, Huang, and Finbow \cite{WangHuangFinbow2020} show that graphs in $\widehat{G}_5$ have $(F_3, F_3)$-colorings.

Dross, Montassier, and Pinlou \cite{DrossMontassierPinlou2015} showed that a planar graph with girth $4$ has a $(F_5, \mathcal{F})$-coloring.
This was improved by Feghali and \v{S}\'amal \cite{FeghaliSamal2024} to a $(F_3, \mathcal{F})$-coloring.
Liu and Wang \cite{LiuWang2022} showed that a planar graph with girth $4$ and no chorded $6$-cycle (which includes all graphs in $\widehat{G}_4$) has a $(F_2, \mathcal{F})$-coloring.
On the other hand, Montassier and Ochem \cite{MontassierOchem2013} constructed for any finite $d_1, d_2$ a planar graph with girth $4$ that has no $(\Delta_{d_1}, \Delta_{d_2})$-coloring.

Cho, Choi, and Park \cite{ChoChoiPark2021} showed that a planar graph without $4$- or $5$-cylces has a $(F_3, F_4)$-coloring, while asking if such a graph has a $(F_2, F_{d_2})$-coloring for some finite $d_2$.


There are three technical comments to be made about Theorem \ref{main theorem}.
First, the theorem considers multigraphs (we consider parallel edges to be cycles of length $2$, and thus any parallel edge must have endpoints with distinct colors in a $(F_{d_1}, F_{d_2})$-coloring).
In similar situations (see \cite{CranstonYancey2020} and \cite{DPcoloring0,DPcoloring} and \cite{digraphs}) the optimal sparsity conditions for a coloring depends on whether it is restricted to simple graphs, although that did not happen here.
Second, the polynomial-time algorithm is in contrast to results about similar colorings being NP-complete.
For example, Montassier and Ochem \cite{MontassierOchem2013} showed that it is NP-complete to determine whether a planar graph with girth $5$ has a $(\Delta_1, \Delta_3)$-coloring.
Moreover, polynomial-time algorithms have generated independent interest (for example, see \cite{cocoa} for an interest in algorithms to produce a $(\Delta_1, \Delta_1)$-coloring or Setion 5.1 of \cite{CorsiniDeschampsFeghaliGoncalvesLangloisTalon2023} for $(F_2, \ldots, F_2)$-coloring).
Third, let us return to the fact that in Theorems \ref{Borodin Kostochka theorem} and \ref{main theorem} that the optimal sparsity condition for a $(\Delta_j, \Delta_k)$-coloring is the same as for a $(\Delta_j \cap \mathcal{F}, \Delta_k \cap \mathcal{F})$-coloring.
We informally describe this phenominon as ``acyclic is free in the sparsity context'' for defective coloring.
Cranston and the author \cite{CranstonYancey2021} encountered this phenominon previously for a different type of coloring.
Which leads us to ask the informal question, for what other types of colorings is acyclic free in the sparsity context?
This question is uninteresting for defective colorings, as the remaining cases where optimal sparsity conditions are known are $(\Delta_i, \Delta_j)$-coloring where $i,j \leq 1$ (see \cite{BorodinKostochka2011} and \cite{BorodinKostochkaYancey}), and in such cases $\Delta_i = F_i$, $\Delta_j = F_j$.
But the question is interesting for other types of colorings (for example, defective DP-coloring \cite{DPcoloring0,DPcoloring}).

The paper is organized as follows.
In Section \ref{preliminaries section} we cover technical preliminaries and establish notation.
Sections \ref{gap lemma section}, \ref{reducible configurations section}, and \ref{discharging section} are dedicated to proving the first half of Theorem \ref{main theorem}.
We use the potential method, which is a discharging proof enhanced with an extra tool called the ``gap lemma.''
The gap lemma is established in Section \ref{gap lemma section}.
The two classical phases of a discharging proof, the reducible configurations and the discharging rules, are presented in Sections \ref{reducible configurations section} and \ref{discharging section}, respectively.
In Section \ref{algorithm section} we prove the second half of Theorem \ref{main theorem}---the polynomial algorithm---by reviewing the arguments made in Sections \ref{gap lemma section} and \ref{reducible configurations section}.
Section \ref{conjecture section} ends this paper with a discussion on whether the assumption $d_2 \geq 2d_1 + 2$ is sharp.

\section{Technical Preliminaries} \label{preliminaries section}

\subsection{Notation} \label{notation subsection}

We use $N^m(u)$ to denote the multiset $\{x: e \in E(G), e=ux\}$ such that $d(u) = |N^m(u)|$ (`m' stands for multiplicity).
Similarly we define the closed multi-neighborhood as $N^m[u] := \{u\} \cup N^m(u)$.
For vertex set $S$, let $N^m(u) \cap S$ denote the multisubset of $N^m(u)$ formed from restricting it to elements in $S$.
Let $N(u)$ denote the underlying set of $N^m(u)$.
For subgraph $H$ and vertex $x$, let $H + x$ denote the subgraph induced on $V(H) \cup \{x\}$ and $H - x$ denote the subgraph induced on $V(H) \setminus \{x\}$.

For $i \in \{1,2\}$, let $W_i: V \rightarrow \mathbb{Z}_{\geq 0}$ denote two weight functions.
A triple $(G,W_1,W_2)$ is denoted as a \emph{weighted graph}.
We say that a partition $V = V_1 \cup V_2$ is a desired $(d_1, d_2)$-coloring if each $G[V_j]$ is a forest and for each $x \in V_j$ we have that $|N^m(x) \cap V_j| + W_j(x) \leq d_j$.
If the values of $d_1, d_2$ are clear, then we simply call it a \emph{desired coloring}.
We say that $(H,W_1',W_2')$ is a \emph{weighted subgraph} of $(G,W_1,W_2)$ if $H$ is a subgraph of $G$ and $W_j'(u) \leq W_j(u)$ for all $j,u$.
Moreover, it is an \emph{induced} weighted subgraph if $H$ is induced and $W_i' = W_i$ over $V(H)$.
It should be clear that if $(G, W_1, W_2)$ has a desired coloring, then so does every weighted subgraph.
A graph is called \emph{critical} if there is no desired coloring of it, but each proper weighted subgraph can be desirably colored.

This notation is similar to prior work.
Chen, Yu, and Wang \cite{ChenYuWang2018} use a function $s$ that is equivalent to our $W_2$.
Our notation $(G, W_1, W_2)$ is implicitly equivalent to the notation by Chen, Raspaud, and Yu \cite{ChenRaspaudYu2022} of $(G^* - P, ph, f)$, where $P$ is the set of pendant hosts.
The following notation of capacities is the same after translation by $1$ as used by Kostochka, Xu, and Zhu \cite{KostochkaXuZhu2023}.

Define $c_j(u) = d_j + 1 - W_j(u)$, which we will call the \emph{capacity} of $u$ for color $j$.
The inequality for a desired coloring can be rewritten as $u \in V_j$ implies $|N^m[u] \cap V_j| \leq c_j(u)$.
If $u$ has zero (or less) capacity for color $j$, then a desired coloring has $u \in V_{3-j}$.
Capacity and weight are redundant terminology, but having both will make reading our arguments easier.
Weight is useful for understanding concepts like ``subgraph;'' while capacity provides a cleaner reference for when a desirable coloring does or does not exist.
In particular, we will frequently discuss vertices with zero capacity and therefore their coloring is predetermined.

We use
$$\alpha = \frac{d_2 + 2}{(d_1 + 2)(d_2 + 1)} \hspace{0.1in} \mbox{ and } \hspace{0.1in} \beta = \frac{1}{d_2 + 1}. $$
The theorem that we will prove in Sections 3, 4, and 5 is as follows.
If $W_1(v) = W_2(v) = 0$, then we say $v$ is \emph{weightless}.
The first half of Theorem \ref{main theorem} follows from Theorem \ref{central goal} by making every vertex weightless.

\begin{theorem}\label{central goal}
Let $d_1, d_2, W_1, W_2$ be fixed with $d_2 \geq 2 d_1 + 2$.
If $G$ is critical, then 
$$ e(H) \geq \beta + \sum_{x \in V(H)}\left( \alpha c_1(x)  + \beta(c_2(x)-1) \right) .$$
\end{theorem}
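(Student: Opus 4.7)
The plan is to use the potential method. Define the potential of any induced weighted subgraph $H$ by
$$\rho(H) = e(H) - \sum_{x \in V(H)} \bigl(\alpha c_1(x) + \beta(c_2(x)-1)\bigr),$$
so that the conclusion of the theorem is exactly $\rho(G) \geq \beta$. I would argue by contradiction: fix $d_1,d_2$ with $d_2 \geq 2d_1+2$ and a minimum counterexample $(G,W_1,W_2)$, a critical weighted graph with $\rho(G) < \beta$. Minimality refers to a well-ordering that prefers fewer edges, then fewer vertices, then larger total weight (this way every proper weighted subgraph is desirably colorable, and strictly smaller critical graphs satisfy the theorem).

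The first technical step is the \emph{gap lemma} of Section~\ref{gap lemma section}: for every proper nonempty induced weighted subgraph $H \subsetneq G$ one has a lower bound $\rho(H) \geq \beta - \varepsilon(H)$, where $\varepsilon(H)$ measures the ``deficiency'' that could come from a partial coloring of $G - V(H)$ being extendable. Concretely, one takes a desired coloring of $G - V(H)$, translates the edges across the boundary into an updated weight function $W_1',W_2'$ on $V(H)$ (each neighbor previously colored $j$ increases $W_j$ by its multiplicity), and asks when $H$ together with these updated weights is desirably colorable. If $\rho(H)$ were small enough relative to $\beta$, extendability would follow, contradicting criticality of $G$; this translates to the lower bound. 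The proof of the gap lemma itself is by induction on the smaller graph $H$ (invoking the theorem inductively), coupled with a case analysis on whether vertices in $H$ have zero capacity. The forest requirement enters by observing that a color class is a forest iff it has at most $|V_j|-c(G[V_j])$ edges, where $c$ counts components, which can be encoded inside the potential bookkeeping via small adjustments to the weights.

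The second step is to prove \emph{reducible configurations} (Section~\ref{reducible configurations section}): certain small structures cannot appear in $G$. Typical targets are (i) a vertex $v$ with $|N^m(v)| + W_1(v) + W_2(v)$ smaller than a threshold determined by $\alpha$ and $\beta$, (ii) short paths or cycles of low-capacity vertices, and (iii) particular local configurations with a neighbor of positive capacity in both colors. For each such configuration $S$, one removes $S$, colors $G-S$ by minimality, and invokes the gap lemma on a slightly larger set to produce enough flexibility to reinsert $S$ with an extended coloring; the forest constraint forces the ``extension'' to also avoid creating a monochromatic cycle, which is handled by choosing root components in $G-S$ carefully before reinserting $S$.

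The final step is the \emph{discharging argument} (Section~\ref{discharging section}). Assign each vertex $x$ the initial charge $\mathrm{ch}(x) = -\alpha c_1(x) - \beta(c_2(x)-1)$ and each edge charge $+1$, so the total charge equals $\rho(G) - \beta < 0$ by our assumption. Using the forbidden reducible configurations established above, I would design local redistribution rules pushing charge from high-degree, low-capacity vertices toward low-degree or high-capacity ones, then verify that every vertex and edge ends with nonnegative charge, producing the contradiction. The main obstacle I anticipate is the gap lemma: getting the forest condition to integrate cleanly with the capacity bookkeeping is delicate, because an extension that satisfies the degree constraints on both colors can still create a monochromatic cycle, and the weight functions must be rich enough to record both obstructions while still obeying a linear inequality in $\alpha$ and $\beta$ that matches the Borodin--Kostochka bound exactly.
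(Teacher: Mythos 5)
Your overall architecture (minimal counterexample, gap lemma, reducible configurations, discharging) is indeed the paper's strategy, but as written the plan has genuine gaps at exactly the points where the proof is hard. First, your treatment of the forest condition does not work: you propose to encode acyclicity ``inside the potential bookkeeping via small adjustments to the weights,'' but the weight functions only record degree-type constraints of the form $|N^m(x)\cap V_j|+W_j(x)\le d_j$, and a monochromatic cycle violates no such inequality; no local weight adjustment can express the global statement $e(G[V_j])\le |V_j|-c(G[V_j])$. The paper instead handles acyclicity combinatorially inside the reducible configurations: a degree-$3$ vertex placed in the minority color lies in at most one monochromatic edge and hence in no monochromatic cycle, a ``flipped coloring'' repairs the one possible overloaded neighbor, and in Lemma \ref{triple three next to two somehow-constrained} the auxiliary graph is built by \emph{adding the edge} $x_2x_3$ precisely so that any cycle through the deleted vertex would already appear in the smaller graph. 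You flag this integration as your ``main obstacle,'' and it is: without these devices the reinsertion step fails.

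Second, two quantitative points are off. The gap lemma you state ($\rho(H)\ge\beta-\varepsilon(H)$ with an unspecified deficiency, in your density-positive convention) is the wrong direction and too weak: what is needed, and what Lemma \ref{gap lemma} proves, is that \emph{every} nonempty proper weighted subgraph has potential strictly better than the explicit constant $\alpha-\beta$ (paper's convention), and this precise constant is consumed later when weights are raised or a vertex and its edges are re-attached (Corollary \ref{add a flag or two}); moreover the delicate regime $0<\rho(H)\le\alpha-\beta$ forces the extra trick of pre-loading weight on a boundary vertex $z$ and splitting on its color, which your sketch does not anticipate. Finally, your discharging endgame proves too little: in your convention the total charge equals $\rho(G)$ (not $\rho(G)-\beta$), so verifying that all final charges are nonnegative only gives $\rho(G)\ge 0$, which does not contradict $\rho(G)<\beta$. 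The paper must additionally extract a strict surplus of $\beta$ --- either one vertex of final charge at most $-\beta$ or two of charge at most $-\beta/2$ --- and this is where the structural lemmas (every remaining vertex doubly-constrained or triple-three, Lemmas \ref{triple-three next to a double constrained} and \ref{triple three next to two somehow-constrained}, and the refined charge bounds for $d_1=1,2,3$) are actually needed. Until these three items are supplied, the proposal is an outline of the right method rather than a proof.
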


For a weighted graph $(H, W_1, W_2)$, we denote the \emph{potential} of $H$ as 
$$ \rho_{d_1, d_2}(H, W_1, W_2) = \sum_{x \in V}\left( \alpha c_1(x)  +  \beta (c_2(x)-1) \right) - e(H). $$
When $d_1, d_2, W_1, W_2$ are clear from context, we will simply write $\rho(H)$ for the potential.
Furthermore, if $H$ is an induced weighted subgraph of $G$ and $G$ is clear from context, then we may simply write $\rho(V(H))$.
Observe that if $(H, W_1', W_2')$ is a weighted subgraph of $(G, W_1, W_2)$, then $\rho(H, W_1', W_2') \geq \rho(V(H))$ as decreasing the weight and removing edges only increases the potential.
Theorem \ref{central goal} is equivalent to $\rho(G) \leq -\beta$ for any critical $G$.

\subsection{Arithmetic and Constructions} \label{arithmetic and constructions section}

In the following we summarize arithmetic that will be repeated multiple times.
For the reader's benefit we will reference Fact \ref{basic arithmetic with potential} at appropriate times, although we use Fact \ref{basic arithmetic with potential}.2.e too frequently to refer to it each time.

\begin{fact}\label{basic arithmetic with potential}
In the following, we assume that the capacity is nonnegative.
\begin{enumerate}
	\item As $d_1 \geq 0$ and $d_2 \geq 2$ we have that $\alpha \leq 2/3$ and $\beta \leq 1/3$ and so $\alpha + \beta \leq 1$.
	\item The potential of a single vertex $u$ for different capacities:
	\begin{enumerate}
		\item If $c_1(u) = 1$ and $c_2(u) = 0$, then $\rho(\{u\}) = \alpha - \beta > 0$.
		\item If $c_1(u) = 0$ and $c_2(u) = 1$, then $\rho(\{u\}) = 0$. 
		\item If $c_1(u) = 0$, then $\rho(\{u\}) \leq 1 - \beta$.
		\item If $c_2(u) = 0$, then $\rho(\{u\}) \leq 1 - \alpha$.
		\item For every vertex $u$, $\rho(\{u\}) \leq 2 - \alpha$.
	\end{enumerate}
	\item If $c_1(u) = c_1(v)$ and $c_2(u) \geq c_2(v)$, then $\rho(\{u\}) - \rho(\{v\}) \leq 1$. 
	\item If $c_2(u) = c_2(v)$ and $c_1(u) \geq c_1(v)$, then $\rho(\{u\}) - \rho(\{v\}) \leq 1 - \alpha + \beta$. 
	\item Adding an unweighted vertex and two edges will change the potential an equal amount as incrementing $W_1$ by one for a single vertex.
	\item As $\alpha = \beta \frac{d_2 + 2}{d_1 + 2}$, we have that $\alpha \geq 2\beta$ if and only if $d_2 \geq 2 d_1 + 2$.  
	\item When $d_2 \geq 2 d_1 + 2$ we have that increasing $W_2$ by one for two vertices changes the potential at most as adding an unweighted vertex and two edges.
\end{enumerate}
\end{fact}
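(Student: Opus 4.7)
The plan is to derive each part by direct arithmetic from the definitions $\alpha = \frac{d_2+2}{(d_1+2)(d_2+1)}$, $\beta = \frac{1}{d_2+1}$, and $\rho(\{u\}) = \alpha c_1(u) + \beta(c_2(u)-1)$ (a single vertex contributes no edges). Two identities do almost all the work: $\beta(d_2+1) = 1$, which is immediate, and $\alpha(d_1+2) = \frac{d_2+2}{d_2+1} = 1 + \beta$, obtained by cancelling the factor $(d_1+2)$ in the numerator of $\alpha$. From these I will repeatedly use the consequences $\alpha(d_1+1) = 1 + \beta - \alpha$ and $\beta d_2 = 1 - \beta$.

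For part (1), monotonicity in $d_1$ gives $\alpha \leq (d_2+2)/[2(d_2+1)]$, which is at most $2/3$ exactly when $d_2 \geq 2$, while $\beta \leq 1/3$ is the same condition. For part (6), rewrite $\alpha/\beta = (d_2+2)/(d_1+2)$, so $\alpha \geq 2\beta$ is equivalent to $d_2 \geq 2d_1+2$. For part (2), plug the specified capacities into the formula for $\rho(\{u\})$: (a) uses $\alpha/\beta = (d_2+2)/(d_1+2) > 1$ (since $d_2 > d_1$) to get $\alpha > \beta$, and (b) is immediate; in (c)--(e) the maxima are attained at the largest admissible capacities ($c_1 \leq d_1+1$, $c_2 \leq d_2+1$), and the identities above yield the stated bounds with equality. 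Parts (3) and (4) bound $\rho(\{u\}) - \rho(\{v\})$ by maximizing the capacity of $u$ and minimizing that of $v$ in the free coordinate, which produces $\beta(d_2+1) = 1$ for (3) and $\alpha(d_1+1) = 1 - \alpha + \beta$ for (4).

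For parts (5) and (7) I track the net change in potential under each operation. Adding an unweighted vertex of full capacity contributes $2 - \alpha$ to $\rho$ by part (2e), so adding such a vertex together with two edges nets $-\alpha$; incrementing $W_1$ by one at a single vertex decreases $c_1$ by one and hence $\rho$ by exactly $\alpha$, giving (5). For (7), increasing $W_2$ by one at two vertices decreases $\rho$ by $2\beta$ in total, so asserting that this decrease is at most that of the operation in (5) is precisely $2\beta \leq \alpha$, which is part (6). The only subtlety in the entire statement is the sign convention in (7): both operations decrease the potential, so ``changes the potential at most as'' must be read as ``decreases the potential by at most as much as,'' after which the bookkeeping is routine.
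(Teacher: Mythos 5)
Your proof is correct and is exactly the routine verification the paper leaves implicit: the Fact is stated without proof, and your identities $\alpha(d_1+2)=1+\beta$ and $\beta(d_2+1)=1$, together with the admissible ranges $0\leq c_1(u)\leq d_1+1$ and $0\leq c_2(u)\leq d_2+1$, give each item by direct substitution just as intended. Your reading of item (7) as ``decreases the potential by at most as much as'' is the sense in which the paper uses it (it is what makes the $k'\beta$ estimate in Corollary \ref{add a flag or two}.II.B work), so there is no gap there.
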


The simplest critical graph is an isolated vertex with capacity $0$ in both colors, which is sharp for Theorem \ref{central goal}.
The sharp examples described by Borodin and Kostochka \cite{BorodinKostochka2014} in Theorem \ref{Borodin Kostochka theorem} are also critical weightless graphs, which proves the sharpness of Theorem \ref{central goal}.
We will describe those examples in the language of Section \ref{notation subsection}, which will provide intuition for some arguments we make later.
Borodin and Kostochka's examples are created by a construction with several steps, which we will break into two constructions.

For the first construction, let $(G,W_1,W_2)$ be critical and sharp for Theorem \ref{central goal} and contain a vertex $v$ with $W_1(v) > 0$.
A new sharp example can be constructed from $G$ by decrementing $W_1(v)$ by one and appending a leaf $v'$ to $v$ such that $W_1(v') = c_2(v') = 0$.
Borodin and Kostochka \cite{BorodinKostochka2014} use the phrase ``peripheral $(d_2, d_1)$-host'' for this construction, while Chen, Raspaud, and Yu \cite{ChenRaspaudYu2022} call it a ``pendant host.''

The second construction requires the following set up:
\begin{itemize}
	\item Let $\{(G_i, W_{1,i}, W_{2,i})\}_{1 \leq i \leq \ell}$ with $\ell \leq d_1 + 2$ be a nonempty set of critical graphs that are sharp for Theorem \ref{central goal}.
	\item Suppose each $(G_i, W_{1,i}, W_{2,i})$ contains a vertex $v_i$ with $W_{2}(v_i) > 0$.
	\item Let $H$ be a $K_{1,d_1+1}$ with each vertex unweighted and vertex set $u_1, \ldots, u_{d_1+2}$.
	\item Let $y_1,y_2,\ldots,y_{d_1+2}$ be a sequence of vertices, each chosen from $\{v_1, \ldots, v_\ell\}$, such that each $v_i$ is chosen at least once.
\end{itemize}
Under these circumstances, a new sharp critical graph can be constructed from $H \cup \bigcup _{1 \leq i \leq \ell} G_i$ by adding edge $y_j u_j$ for each $j$ and decrementing $W_{2}(v_i)$ by one for each $i$ (regardless of how many neighbors in $H$ it has).
Borodin and Kostochka \cite{BorodinKostochka2014} use the term ``flag'' for $H$ in the special case of this construction when $\ell = 1$.
Chen, Yu, and Wang \cite{ChenYuWang2018} used the name ``special triangles'' for flags when $d_1 = 0$.

Any critical graph can be expanded into a weightless critical graph by repeatedly applying the first construction and then repeatedly adding flags.
Moreover, if the original graph is sharp for Theorem \ref{central goal}, then the resulting weightless graph will be too.

There is a third construction that we can introduce.
Let $(G,W_1,W_2)$ be critical and sharp for Theorem \ref{central goal}.
If $c_i(u) = 0$ for some vertex $w$ and index $i$, then form a new sharp critical graph by reducing $W_i(u)$ to zero, appending a leaf $u'$ to $u$, and giving $u'$ capacities $c_i(u') = 1$, $c_{3-i}(u') = 0$.
The proof to our gap lemma will be slightly different than in \cite{BorodinKostochka2014,ChenRaspaudYu2022}, and this is because we will use this third construction to shorten our argument.

\subsection{Opening Statements of the Proof}\label{opening statements section}
We assume $d_1, d_2$ are fixed integers such that $d_2 \geq 2d_1 + 2$.
We say that weighted graph $(H', W_1', W_2')$ is \emph{smaller} than weighted graph $(H, W_1, W_2)$ if it contains fewer edges.
By way of contradiction, let $G$ be a smallest counterexample to Theorem \ref{central goal}.
That is, $G$ is a critical graph with $\rho(G) > -\beta$ and any critical graph $H$ with fewer edges than $G$ satisfies $\rho(H) \leq -\beta$.

We make a couple of obvious statements about $G$, as they are true for any critical graph.
We can assume $G$ is connected.
We can assume $c_j(w) \geq 0$ for all $w \in V$ and $j$.
It should also be clear that $c_1(w) + c_2(w) \geq 1$ for all $w \in V$, as this is true for any critical graph with an edge.
By simple case analysis, we can assume $G$ contains at least two edges.
The multiplicity of any edge in $G$ is at most $2$ (if it is at least two, then its endpoints have different colors in any desirable coloring).

\section{Gap Lemma} \label{gap lemma section}

\begin{lemma}\label{gap lemma}
If $(H, W_1', W_2')$ is a nonempty weighted subgraph of $(G, W_1, W_2)$ with $e(H) < e(G)$, then $\rho(H, W_1', W_2') > \alpha - \beta$.
\end{lemma}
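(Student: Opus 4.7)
My plan is proof by contradiction: assume some nonempty weighted subgraph $(H, W_1', W_2')$ of $(G, W_1, W_2)$ with $e(H) < e(G)$ satisfies $\rho(H, W_1', W_2') \leq \alpha - \beta$. Since replacing $H$ by $(G[V(H)], W_1', W_2')$ only adds edges and thus only decreases $\rho$, I would assume $H$ is induced (handling $V(H) = V(G)$ separately). I would further choose $H$ to maximize $|V(H)|$ among all such counterexamples, so that for every $v \in V(G) \setminus V(H)$ the graph $H + v$ has $\rho > \alpha - \beta$, which translates into the useful inequality $\alpha c_1(v) + \beta(c_2(v)-1) > |N^m(v) \cap V(H)|$.

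Next I would produce a desired coloring $\phi$ of $V(H)$. If $H$ itself admits one, use it; otherwise $H$ contains a critical weighted subgraph $H_0$ with $e(H_0) \leq e(H) < e(G)$, so by minimality of $G$ we get $\rho(H_0) \leq -\beta$, and I would leverage this via the third construction from Section \ref{arithmetic and constructions section}, which replaces vertices of zero capacity by pendants in a $\rho$-neutral and criticality-preserving way, reducing to the colorable case. Given $\phi$, I would define a reduced weighted graph $G^*$ on $V(G) \setminus V(H)$ by $W_j^*(v) = W_j(v) + |N^m(v) \cap \phi^{-1}(j)|$, which records the degree part of the constraint that any extension of $\phi$ through $G^*$ to $G$ must satisfy. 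To handle the forest part---if $v \in V(G) \setminus V(H)$ has two $\phi$-color-$j$ neighbors in $V(H)$ joined by a color-$j$ path in $H$, then assigning $v$ color $j$ would create a cycle---I would drop $c_j^*(v)$ to zero and absorb this via the third construction, again at no $\rho$-cost. Since $G$ has no desired coloring, neither does $G^*$, so $G^*$ contains a critical weighted subgraph $G'$ with $e(G') \leq e(G^*) < e(G)$, and minimality gives $\rho(G') \leq -\beta$.

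The final arithmetic combines $\rho(G[V(H)], W) \leq \rho(H, W_1', W_2') \leq \alpha - \beta$, the bound $\rho(G') \leq -\beta$, the contribution of the boundary edges between $V(H)$ and $V(G) \setminus V(H)$ (using $\alpha + \beta \leq 1$ from Fact \ref{basic arithmetic with potential}.1), and the maximality bound on vertices in $V(G^*) \setminus V(G')$, to yield $\rho(G) \leq -\beta$, contradicting $\rho(G) > -\beta$. The main obstacle is the forest condition at the boundary: unlike the Borodin--Kostochka setting where only vertex degrees need tracking across the $V(H)$--$V(G)\setminus V(H)$ cut, here we must additionally forbid cycles that span both sides. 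The third construction is exactly the shortcut the paper advertises---it represents forest obstructions as $\rho$-neutral capacity-zero constraints, so the boundary accounting closes cleanly without the more delicate case analysis used in \cite{BorodinKostochka2014,ChenRaspaudYu2022}.
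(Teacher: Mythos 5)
Your skeleton (color a low-potential subgraph $H$, push the constraints onto $G-V(H)$ via weights, extract a critical subgraph there, and combine potentials) matches the paper's, but the way you transfer constraints across the cut has a genuine gap. Your $G^*$, with $W_j^*(v)=W_j(v)+|N^m(v)\cap\phi^{-1}(j)|$, permits monochromatic edges between $V(H)$ and $V(G)\setminus V(H)$, and your cycle fix only zeroes $c_j^*(v)$ when a \emph{single} outside vertex $v$ would close a color-$j$ path of $H$. This leaves two failures of the implication ``desired coloring of $G^*$ $\cup$ $\phi$ $\Rightarrow$ desired coloring of $G$,'' which is exactly what you need to conclude $G^*$ has a critical subgraph. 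First, a monochromatic cycle can cross the cut through two or more outside vertices (e.g.\ a color-$j$ path in $H$ from $z_1$ to $z_2$, crossing edges $z_1v_1$ and $z_2v_2$, and a color-$j$ path from $v_1$ to $v_2$ in $G^*$); nothing in your construction forbids this. Second, your bookkeeping is one-sided: a monochromatic crossing edge adds to the monochromatic degree of its endpoint $z\in V(H)$, whose capacity was already saturated by $\phi$, and you never adjust anything at $z$. The paper's construction avoids both problems at once by setting $c_i^*(u)=0$ whenever $u\notin V(H)$ has \emph{any} neighbor in $V_i$, so that no monochromatic crossing edge exists at all; the resulting potential cost (up to $1$ per zeroed capacity, by Fact \ref{basic arithmetic with potential}.(3--4)) is charged via an injection to the crossing edges. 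The one place the paper does allow a crossing edge (Case 2.B) it is a single edge $zx$, which cannot lie on a cycle, and slack at $z$ is pre-arranged by incrementing $W_2(z)$ \emph{before} coloring $H$. Your appeal to the ``third construction'' as a $\rho$-neutral absorption device is also off: that construction builds new sharp critical examples from old ones; zeroing a capacity inside this proof is not free and must be paid for by the charging argument.

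Separately, your final arithmetic does not close on the hard range of $\rho(H)$. Even granting the transfer step, the straightforward combination yields $\rho(G)\leq\rho(H)-\beta$, which is a contradiction only when $\rho(H)\leq 0$; for $0<\rho(H)\leq\alpha-\beta$ you get at best $\rho(G)\leq\alpha-2\beta$, and since $\alpha\geq 2\beta$ this does not contradict $\rho(G)>-\beta$. The paper handles this with a separate case: it pre-loads one unit of $W_2$ on a boundary vertex $z$, colors the modified $H$, and then recovers the missing amount through a refined estimate (either gaining $\alpha-\beta$ via Fact \ref{basic arithmetic with potential}.4 when $z\in V_1'$, or bounding the cost at $x$ by $\beta$ when $z\in V_2'$, using that $N^m(x)\cap V(H)=\{z\}$). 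Your proposal has no counterpart to this refinement, and your extremal choice (maximizing $|V(H)|$ rather than minimizing potential) does not supply one; the ``maximality bound on vertices of $V(G^*)\setminus V(G')$'' you invoke is not needed in the paper's accounting (those vertices are simply excluded from $\widehat{H}$) and does not produce the missing $\alpha-\beta$. Finally, a small point: since $H$ has fewer edges than the critical graph $G$, it automatically admits a desired coloring, so your ``otherwise'' branch extracting a critical $H_0\subseteq H$ is vacuous.
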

\begin{proof}
Let $(H, W_1', W_2')$ be a proper weighted subgraph of $(G, W_1, W_2)$ that minimizes $\rho(H, W_1', W_2')$ among all those with at least one fewer edge than $G$.
By way of contradiction, assume $\rho(H) \leq \alpha - \beta$.
Reducing weight only increases potential, so we may assume that $W_1' = W_1$ and $W_2' = W_2$.

If $V(H) = V(G)$, then $\rho(H) \geq \rho(G) + 1 > -\beta + 1 > \alpha - \beta$.
Therefore $V(H) \subsetneq V(G)$.
As removing edges only increases potential, we may assume that $H$ is an induced weighted subgraph.

Our argument now splits into two cases, although the arguments are similar in each.
It will benefit the reader to observe now that the assumption of case 1 is not used until the last line of the argument.

\textbf{Case 1:} $\rho(H) \leq 0$.
As $G$ is critical, there is a desired coloring of $H$; let $V(H) = V_1 \cup V_2$ be such a coloring.
We construct a new weighted graph $(G^*, W_1^*, W_2^*)$ as follows:
\begin{itemize}
	\item[(A)] Let $G^* = G - V(H)$.
	\item[(B)] Let $W_i^* = W_i$, with the following exceptions:
	\item[(C)] If $u \notin V(H)$ and $N_G(u) \cap V_i \neq \emptyset$, then set $c_i^*(u) = 0$.
\end{itemize}
If there is a desired coloring of $(G^*, W_1^*, W_2^*)$, then unioning that coloring with the coloring of $H$ would create a desired coloring of $G$, as by construction there is no monochromatic edge that bridges $V(H)$ and $G \setminus V(H)$.
This contradicts the criticality of $G$, so there must be a critical subgraph $(\widehat{G}, \widehat{W_1}, \widehat{W_2})$ of $(G^*, W_1^*, W_2^*)$.
As it has fewer edges, $\widehat{G}$ is smaller than $G$, and so by assumption $\rho(\widehat{G}, \widehat{W_1}, \widehat{W_2}) \leq -\beta$.

Let $(\widehat{H}, W_1, W_2)$ be the weighted induced subgraph of $(G, W_1, W_2)$ over the vertex set $V(H) \cup V(\widehat{G})$.
As the potential function is linear, we can compute $\rho(\widehat{H}, W_1, W_2)$ as a sum of `contributions' from four steps:
\begin{enumerate}
	\item[(1)] the sum of $\rho(H, W_1, W_2)$ and $\rho(\widehat{G}, \widehat{W_1}, \widehat{W_2})$,
	\item[(2)] plus $\rho(V(\widehat{G}), W_1^*, W_2^*) - \rho(\widehat{G}, \widehat{W_1}, \widehat{W_2})$,
	\item[(3)] plus $\rho(V(\widehat{G}), W_1, W_2) - \rho(V(\widehat{G}), W_1^*, W_2^*)$, which comes from the change in capacity in step (C) in the construction of $G^*$, and
	\item[(4)] minus $1$ for each edge with an endpoint in $H$ and another endpoint in $\widehat{G}$. 
\end{enumerate}
Observe that as $\widehat{G}$ is a weighted subgraph of $G^*$, the contribution of step (2) is nonpositive.
By Fact \ref{basic arithmetic with potential}.(3-4), each application of step (C) in the construction of $G^*$ affects the potential by at most $1$.
By construction, there is an injection from each application of (C) on a vertex in $\rho(\widehat{G})$ to the edges with an endpoint in $H$ and another endpoint in $\widehat{G}$.
Thus the net overall contribution of steps (3) and (4) above is nonpositive.
Therefore the overall potential is bounded by step (1) above and we have 
$$\rho(\widehat{H}, W_1, W_2) \leq \rho(H, W_1, W_2) + \rho(\widehat{G}, \widehat{W_1}, \widehat{W_2}) \leq \rho(H, W_1, W_2)-\beta.$$

So the choice of $H$ as minimizing potential among all proper subgraphs implies that $\widehat{H} = G$.
But as $\rho(H) \leq 0$, this implies $\rho(G) \leq -\beta$, which contradicts the choice of $G$.

\textbf{Case 2:} $\rho(H) > 0$.  
This case follows similarly to case 1.
The difference is that we increase the weight on a vertex in $H$ to influence the desired coloring of $H$, which is then used to slightly modify the construction of $G^*$.
In the following we carefully describe the new aspects of the argument, while quickly mentioning details that are unchanged from case 1.

Pick an arbitrary edge $zx$ such that $z \in H$ and $x \notin H$.
Let $(H', W_1', W_2')$ be the weighted graph formed from $H$ by increasing $W_2(z)$ by one.
By choice of $H$, the assumption of this case implies that $\rho(H^*) > 0$ for all proper weighted subgraphs $H^*$ of $G$.
Therefore every weighted subgraph of $H'$ has potential strictly larger than $-\beta$.
By the minimality of $G$, this implies that $H'$ does not contain a critical subgraph, and therefore $H'$ has a desirable coloring.
Let $V(H') = V_1' \cup V_2'$ be such a coloring.

\textbf{Case 2.A:} $z \in V_1'$.
This subcase proceeds exactly like case 1, except with a stronger analysis of the potential function at the end.
Observe that the entire argument except the last line of case 1 still holds (including the conclusion that $\widehat{H} = G$), but  
$$ \rho(G) = \rho(\widehat{H}, W_1, W_2) \leq \rho(H) - \beta $$
is no longer a contradiction as we only get $\rho(G) \leq \alpha - 2\beta$.
The key difference is that during the construction of $G^*$, when we set $c_1(x) = 0$ during step (C), we only modified the potential by at most $1- \alpha+\beta$ by Fact \ref{basic arithmetic with potential}.4, when we previously bounded it by $1$.
As we have $\widehat{H} = G$, we know $x \in \widehat{G}$, and therefore we can strengthen our bound on the potential by $\alpha-\beta$, recreating the contradiction as before.

\textbf{Case 2.B:} $z \in V_2'$.
As $W_2'(z) = W_2(z) + 1$, we have that $W_2(z) + |N^m(z) \cap V_2'| \leq d_2 - 1$.
We now have a slightly different approach to constructing $G^*$.

First, observe that $N^m(x) \cap V(H) = \{z\}$, because if $x$ is in a second edge with an endpoint in $H$ then 
$$\rho(H + x) \leq \rho(H) + (2-\alpha) - 2 \leq -\beta.$$
This either contradicts the minimality of $\rho(H)$ (if $H + x$ is a proper subgraph of $G$) or the choice $\rho(G) > -\beta$.
Therefore step (C) in the construction of $G^*$ is only applied at most once to $x$, and it is because of the edge $zx$.
So we construct $G^*$ the same way as in case 1, with the following exception: when we apply step (C) to $x$, instead of setting $c_2^*(x) = 0$, we instead set $W^*_2(x) = W_2(x) + 1$.
  
We repeat the claim that if $G^*$ has a desirable coloring $V_1^* \cup V_2^*$, then unioning it with desirable coloring $V_1' \cup V_2'$ of $H$ would lead to a desirable coloring of $G$, which is a contradiction.
By construction, the only edge with exactly one endpoint in $H$ that could have both endpoints in the same color class is $zx$.
Since there is only one such edge, it can not be in a monochromatic cycle.
Because $z \in V_2'$, if $xz$ is monochromatic, then $x \in V_2^*$, and by construction $W_2(z) + |N^m(z) \cap V_2^*| \leq d_2 - 1$.
This proves the claim.

So $G^*$ has a critical subgraph $\widehat{G}$, and we repeat the argument from case 1 until the last line, where similar to case 2.A we use a stronger analysis of the potential function.
This time we bound by $\beta$ the affect of step (C) on $x$, and therefore we have $\rho(G) \leq (\alpha-\beta) - \beta - (1-\beta) < - \beta$, recreating the contradiction from before.
\end{proof}

\begin{cor}\label{at least -beta}
If $H$ is a weighted subgraph of $G$, then $\rho(H) > -\beta$.
\end{cor}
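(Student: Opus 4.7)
The plan is to split into two cases according to whether $H$ has strictly fewer edges than $G$ or not, and to leverage the Gap Lemma for the first case and the standing hypothesis $\rho(G) > -\beta$ for the second.

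First, suppose $H$ is nonempty and $e(H) < e(G)$. Then the Gap Lemma applies directly and yields $\rho(H, W_1', W_2') > \alpha - \beta$. Since $\alpha = \frac{d_2 + 2}{(d_1+2)(d_2+1)} > 0$, this immediately gives $\rho(H) > -\beta$, as desired. (If $H$ is empty then $\rho(H) = 0 > -\beta$ trivially.)

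Next, suppose $e(H) = e(G)$. Because $H$ is a subgraph of $G$, its edge set is contained in $E(G)$, so in this case $E(H) = E(G)$. Every edge of $G$ has both endpoints in $V(H)$, and since $G$ is connected with at least one edge, this forces $V(H) = V(G)$. Thus $H$ and $G$ coincide as (multi)graphs and differ only in their weight functions, with $W_i'(u) \leq W_i(u)$ for each $i$ and each $u$. Because decreasing weights only increases capacities $c_i$, and $\rho$ is monotone nondecreasing in each $c_i$, we obtain $\rho(H, W_1', W_2') \geq \rho(G, W_1, W_2)$. Combined with the standing assumption $\rho(G) > -\beta$ (which defines $G$ as a counterexample to Theorem \ref{central goal}), this gives $\rho(H) > -\beta$.

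There is essentially no obstacle here: the corollary is a clean packaging of the Gap Lemma together with the counterexample hypothesis, with the only subtlety being the observation that equality of edge counts forces $H = G$ as unweighted graphs (via connectivity of $G$), so that the weaker bound $\rho(G) > -\beta$ can be invoked in place of the stronger Gap Lemma bound.
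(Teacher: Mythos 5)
Your proof is correct and follows essentially the same route as the paper: handle the empty case trivially, invoke Lemma \ref{gap lemma} when $H$ is a proper-in-edges subgraph, and otherwise observe that $H$ coincides with $G$ up to (possibly smaller) weights so that monotonicity of $\rho$ gives $\rho(H) \geq \rho(G) > -\beta$. The only difference is cosmetic: the paper splits on whether $V(H) = V(G)$ while you split on whether $e(H) = e(G)$ (using connectivity of $G$ to force $V(H)=V(G)$ in that case), which amounts to the same argument.
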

\begin{proof}
If $H$ is empty, then $\rho(H) = 0$.
If $V(H) = V(G)$, then $\rho(H) \geq \rho(G) > -\beta$.
Otherwise $\rho(H) > \alpha - \beta$ by Lemma \ref{gap lemma}.
\end{proof}

\begin{cor}\label{capacity sums to at least 2}
For each vertex $u$ in $G$ we have $c_1(u) + c_2(u) \geq 2$.
\end{cor}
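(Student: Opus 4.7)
The plan is to apply Lemma \ref{gap lemma} to the singleton weighted subgraph consisting of $u$ alone. The opening statements in Section \ref{opening statements section} already guarantee $c_1(w) + c_2(w) \geq 1$ for every vertex $w$, so I would argue by contradiction and assume some $u$ has $c_1(u) + c_2(u) = 1$. Since $G$ contains at least two edges, the induced weighted subgraph $H$ on $\{u\}$ (with the original weights inherited from $G$) is nonempty and satisfies $e(H) = 0 < e(G)$, so Lemma \ref{gap lemma} applies and yields $\rho(H) > \alpha - \beta$.

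From here I would split on the two possibilities for the pair $(c_1(u), c_2(u))$. If $c_1(u) = 1$ and $c_2(u) = 0$, then Fact \ref{basic arithmetic with potential}.2(a) evaluates $\rho(H) = \alpha - \beta$, directly contradicting the strict inequality from the gap lemma. If $c_1(u) = 0$ and $c_2(u) = 1$, then Fact \ref{basic arithmetic with potential}.2(b) evaluates $\rho(H) = 0$; and since the hypothesis $d_2 \geq 2d_1 + 2$ forces $\alpha \geq 2\beta$ via Fact \ref{basic arithmetic with potential}.6, we have $\alpha - \beta \geq \beta > 0$, which again contradicts $\rho(H) > \alpha - \beta$.

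There is no real obstacle here: the gap lemma does the heavy lifting, and the case analysis is trivial because integrality of capacities means $c_1(u) + c_2(u) = 1$ is the only case that needs to be ruled out. The one detail worth flagging is that Lemma \ref{gap lemma} is phrased in terms of $e(H) < e(G)$ rather than properness, which is precisely why the standing assumption that $G$ has at least two edges is needed to legitimize $H = \{u\}$ as an admissible input.
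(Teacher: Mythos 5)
Your proof is correct and is exactly the argument the paper intends: the corollary is stated without proof precisely because applying Lemma \ref{gap lemma} to the singleton $\{u\}$ (legitimate since $e(G) \geq 2$ from Section \ref{opening statements section}) and comparing with Fact \ref{basic arithmetic with potential}.2(a)--(b), which give $\rho(\{u\}) \leq \alpha - \beta$ whenever $c_1(u)+c_2(u)=1$, immediately yields the contradiction. Nothing is missing.
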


The most common application of Lemma \ref{gap lemma} is to say that proper subgraphs of $G$ have desirable colorings, even if we increase the weights on a few vertices.
We collect some of these arguments below.

\begin{cor} \label{add a flag or two}
Let $(H, W_1', W_2')$ be a proper weighted subgraph of $(G, W_1, W_2)$ with strictly fewer edges.
There exists a desired coloring of $(H, W_1'', W_2'')$ in any of the following two cases:
\begin{itemize}
	\item[(I)] $W_1' = W_1''$, $W_2' = W_2''$ except for vertex $u$ and index $i$, where $W_i''(u) = W_i'(u) + 1$.
	\item[(II)] $W_1' = W_1''$, $W_2' = W_2''$ except there is a vertex $x \notin V(H)$ and vertex set $\{u_1, u_2, \ldots, u_k\} \subseteq N(x) \cap V(H)$ where $W_{i_j}''(u_j) = W_{i_j}'(u_j) + 1$ for some sequence $i_j$ and at least one of the following hold:
	\begin{itemize}
		\item[(A)] $e(G) \geq e(H) + k + 1$, 
		\item[(B)] $i_j = 2$ for all $j$, 
		\item[(C)] $\rho(\{x\}) \leq 2-2\alpha$, or
		\item[(D)] $k \geq 3$ and $d_1 \geq 1$.

	\end{itemize}
\end{itemize}
\end{cor}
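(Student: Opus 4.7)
My plan is to prove both parts of the corollary by contradiction, following a uniform setup that reduces everything to the gap lemma (Lemma \ref{gap lemma}). Suppose $(H, W_1'', W_2'')$ admits no desired coloring; then it contains a critical weighted subgraph $(\widehat{H}, \widehat{W}_1, \widehat{W}_2)$, which by minimality of $G$ satisfies $\rho(\widehat{H}, \widehat{W}_1, \widehat{W}_2) \leq -\beta$. Put $\overline{W}_j(v) = \min(\widehat{W}_j(v), W_j(v))$ on $V(\widehat{H})$, so that $(\widehat{H}, \overline{W}_1, \overline{W}_2)$ is a genuine weighted subgraph of $G$ with $e(\widehat{H}) \leq e(H) < e(G)$. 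By the gap lemma, $\rho(\widehat{H}, \overline{W}_1, \overline{W}_2) > \alpha - \beta$. The only coordinates where $\widehat{W}$ can exceed $\overline{W}$ are the modified $(u_j, i_j)$, and each excess is at most $1$; letting $s_1$ and $s_2$ count the color-$1$ and color-$2$ excesses that actually occur inside $\widehat{H}$ gives
$$\rho(\widehat{H}, \widehat{W}_1, \widehat{W}_2) = \rho(\widehat{H}, \overline{W}_1, \overline{W}_2) - s_1 \alpha - s_2 \beta.$$

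For case (I), only one coordinate is modified, so $s_1 + s_2 \leq 1$, and combining the two displays yields $\rho(\widehat{H}, \widehat{W}_1, \widehat{W}_2) > (\alpha - \beta) - \alpha = -\beta$, contradicting the criticality bound.

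For case (II) set $k' = s_1 + s_2$. If $k' = 0$ the case-(I) argument concludes. Otherwise I enlarge the weighted subgraph by adjoining $x$ (with its $G$-weights), the external vertices among $u_1, \ldots, u_k$ (those not in $V(\widehat{H})$, with $G$-weights), and the edges $xu_j$ of $G$, and then apply the gap lemma to this enlargement. After rearrangement it yields an inequality of the form
$$\rho(\{x\}) + \sum_{u_j \notin V(\widehat{H})} \rho(\{u_j\}) > \alpha + \ell - s_1 \alpha - s_2 \beta,$$
where $\ell \leq k$ is the number of $xu_j$ edges actually included. Bounding each single-vertex potential by $2 - \alpha$ (Fact \ref{basic arithmetic with potential}.2.e) and using $\alpha \geq 2\beta$ (Fact \ref{basic arithmetic with potential}.6) is meant to rule out this inequality in every sub-case.

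The main obstacle is satisfying the edge-count condition $e(\widehat{H}) + \ell < e(G)$ of the gap lemma while keeping $\ell$ large enough to make the rearrangement bite. Sub-case (A) provides the slack directly: $e(H) + k < e(G)$ permits $\ell = k$ even when $\widehat{H} = H$. For (B), since every $i_j = 2$, applying the gap lemma to $(\widehat{H}, \overline{W}_1, \overline{W}_2)$ by itself already forces $s_2 \beta > \alpha$, hence $s_2 \geq 3$ by $\alpha \geq 2\beta$, which in particular forces $\widehat{H}$ to be a proper subgraph of $H$ when $k \leq 2$ and otherwise opens enough room for the needed edges after a short arithmetic check. For (C) the hypothesis $\rho(\{x\}) \leq 2 - 2\alpha$ supplies an extra $\alpha$ on the left that closes the awkward $k' = 1$ corner. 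For (D) the combination $k \geq 3$ and $d_1 \geq 1$ (the latter forcing $\alpha \leq 2/5$) ensures that the contributions of the external $u_j$'s together with the small $\alpha$ overwhelm the gap-lemma right side. In every sub-case the contradiction takes the form $\rho(\widehat{H}, \widehat{W}_1, \widehat{W}_2) > -\beta$.
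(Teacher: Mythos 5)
Your treatment of part (I) is correct and coincides with the paper's. Part (II), however, has a genuine gap, and it originates in your choice to adjoin the \emph{external} vertices $u_j \notin V(\widehat{H})$ to the enlargement. Each such vertex contributes up to $2-\alpha$ to the left-hand side of your displayed inequality, while its edge $xu_j$ contributes only $1$ to the right-hand side through $\ell$; since $2-\alpha>1$, every external $u_j$ makes the inequality \emph{easier} to satisfy, so bounding single-vertex potentials by $2-\alpha$ cannot ``rule out this inequality in every sub-case.'' Concretely, if $k$ is large but only two of the $u_j$ lie in $\widehat{H}$, the two sides differ by roughly $(k-2)(1-\alpha)>0$ in the wrong direction, and no contradiction follows. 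Sub-case (D) is where this is fatal: the hypothesis $k\geq 3$ is worthless unless you know that at least three of the $u_j$ actually lie in $V(\widehat{H})$, and nothing in your sketch establishes that; moreover your remark that the external contributions ``overwhelm the gap-lemma right side'' has the sign backwards, since those contributions sit on the left. Sub-case (A) as written suffers from the same problem, and for (B) and (C) the phrases ``a short arithmetic check'' and ``closes the awkward $k'=1$ corner'' are not arguments (indeed $k'=1$ is already disposed of by your part-(I) computation; the case that needs (C) is $k'=2$ with mixed colors).

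The missing idea is the paper's two-step pivot. Adjoin \emph{only} $x$ (no external vertices): with $k'$ the number of $u_j$ in $V(\widehat{H})$, one gets $\rho(\widehat{H}+x)\leq \rho(\widehat{H},\widehat{W_1},\widehat{W_2})+k'\alpha+(2-\alpha)-k'\leq \alpha-\beta-(k'-2)(1-\alpha)\leq \alpha-\beta$ using only $k'\geq 2$. Lemma \ref{gap lemma} then forces $\widehat{H}+x$ to miss no edge of $G$, i.e.\ $\widehat{H}+x=G$; hence \emph{every} $u_j$ lies in $V(\widehat{H})$, so $k'=k$ and there are no external vertices at all. The final contradiction in each sub-case then comes from Corollary \ref{at least -beta}, which holds for every weighted subgraph of $G$ (including $G$ itself) and therefore needs no edge-count hypothesis---so the ``main obstacle'' you describe, satisfying $e(\widehat{H})+\ell<e(G)$, dissolves; your proposal never exploits this corollary and instead leans entirely on the gap lemma, which is why the edge-count issue looks insurmountable. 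With $k'=k$ in hand the refinements are exactly the ones you gesture at: (A) yields an uncounted edge and an extra $-1$; (B) replaces $k'\alpha$ by $k'\beta$ and uses $\alpha\geq 2\beta$; (C) replaces $2-\alpha$ by $2-2\alpha$; (D) uses $k'=k\geq 3$ together with $\alpha<1/2$ when $d_1\geq 1$; each pushes the bound to at most $-\beta$, contradicting Corollary \ref{at least -beta}.
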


\begin{proof}
By way of contradiction, suppose that $(H, W_1'', W_2'')$ has a critical weighted subgraph $(\widehat{H}, \widehat{W_1}, \widehat{W_2})$.
By the minimality of $G$ we have that $\rho(\widehat{H}) \leq -\beta$.
By Lemma \ref{gap lemma}, $\rho(V(\widehat{H}), W_1, W_2) > \alpha - \beta$.
This combines to say that 
$$ \rho(V(\widehat{H}), W_1, W_2) - \rho(\widehat{H}, \widehat{W_1}, \widehat{W_2}) > (\alpha - \beta) - (-\beta) = \alpha. $$
Increasing one of the weights of a single vertex only changes the potential by at most $\alpha$, and so this proves part (I).

Let $k' = |V(\widehat{H}) \cap \{x_1, \ldots, x_k\}|$.
By the above reasoning, we can assume $k' \geq 2$.

Observe that 
\begin{eqnarray*}
 \rho(\widehat{H} + x, W_1, W_2)  &\leq & \rho(V(\widehat{H}), W_1, W_2) + \rho(\{x\}) - k' \\
			& \leq & (\rho(V(\widehat{H}), \widehat{W_1}, \widehat{W_2}) + k' \alpha) + (2 - \alpha) - k' \\
			& \leq & \alpha-\beta - (k'-2)(1-\alpha).
\end{eqnarray*}
As $\alpha < 1$, so $\rho(\widehat{H} + x) \leq \alpha - \beta$.
By Lemma \ref{gap lemma}, $\widehat{H} + x = G$, which implies $k' = k$.
We strengthen this inequality based on additional assumptions.

By the formula for $\alpha$ we have that $\alpha \leq 2/3$ and if $d_1 \geq 1$, then $\alpha < 0.5$.
If we are in Case (II.A), then we missed an edge and the above bound becomes $\rho(\widehat{H} + x) \leq \alpha - \beta - 1 < -\beta$, which contradicts Corollary \ref{at least -beta}.
If we are in Case (II.B), then the second line in the above inequality $k' \alpha$ can be replaced by $k' \beta$, and we get $\rho(\widehat{H} + x) \leq \beta-\alpha$.
By Fact \ref{basic arithmetic with potential}.6 we have $\beta - \alpha \leq -\beta$, which contradicts Corollary \ref{at least -beta}.
If we are in Case (II.C), then the bound $\rho(\{u\}) \leq 2-\alpha$ can be replaced by $2-2\alpha$, and so $\rho(\widehat{H} + x) \leq -\beta$, which contradicts Corollary \ref{at least -beta}.
If we are in Case (II.D), then $\rho(\widehat{H} + x) \leq \alpha - \beta - (1-\alpha) < -\beta$, which contradicts Corollary \ref{at least -beta}.
\end{proof}

\section{Reducible Configurations} \label{reducible configurations section}

We will use the following notation about a vertex $u$:
\begin{itemize}
	\item $u$ is $i$-slack if $c_i(u) \geq d(u) + 1$ or if $c_i(u) = d(u)$ and $c_{3-i}(u) \geq 1$;
	\item $u$ is $i$-null if $c_i(u) = 0$; and 
	\item $u$ is $i$-constrained if it is not $i$-slack or $i$-null.
\end{itemize}
We say that a vertex is \emph{doubly-constrained} if it is $1$-constrained and $2$-constrained.
Observe that a vertex $u$ is doubly-constrained if and only if $1 \leq c_i(u) < d(u)$ for each $i$.
We say that a vertex is \emph{somehow-constrained} if it is $1$-constrained or $2$-constrained.
We will use the term \emph{three-two-two} to denote a vertex $u$ with degree $3$ and $c_i(u) \geq 2$ for each $i$.
We will use the term \emph{triple-three} to denote a vertex $u$ with degree $3$ and $c_i(u) \geq 3$ for each $i$.

We will give a quick outline of our reducible configurations that is more intuitive than rigorous (for the extent of this outline, ignore that a vertex can be both doubly-constrained and a three-two-two).
In Lemmas \ref{2 neighbors} and \ref{degree 2} we will show that $G$ has minimum degree $2$, and vertices with degree $2$ are doubly-constrained or $i$-null for some $i$.
This is sufficient to prove the theorem if $d_1 = 0$.
When $d_1 > 0$, any vertex with insufficient charge will be a three-two-two.
We balance the charge of such vertices as follows.
In Lemma \ref{triple-three next to a double constrained} we will show that each three-two-two vertex has a doubly-constrained neighbor when $d_1 \geq 1$.
In Lemma \ref{triple three next to two somehow-constrained} we will show that if the vertex satisfies the stronger property of being triple-three, then it is adjacent to another vertex that is somehow-constrained.

\begin{lemma}\label{2 neighbors}
For each vertex $u$, $|N(u)| \geq 2$.
\end{lemma}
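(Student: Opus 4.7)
The plan is to assume, for contradiction, that some vertex $u$ has $|N(u)| \le 1$ and then build a desired coloring of $G$, violating criticality. Connectedness of $G$ together with $e(G) \ge 2$ immediately forces $|N(u)| = 1$; let $v$ be the unique neighbor and let $\mu \in \{1,2\}$ denote the multiplicity of the edge $uv$. The argument splits first on $c_1(u), c_2(u)$ and then on $\mu$.

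When $c_1(u) \ge 1$ and $c_2(u) \ge 1$, I take any desired coloring of the proper weighted subgraph $G - u$ (which exists by criticality) and color $u$ with the class opposite $v$'s. For either $\mu$ this succeeds: $|N^m[u] \cap V_{3-j}| = 1 \le c_{3-j}(u)$, $v$'s constraint is unchanged, and $u$ is isolated in its class so the forest property holds. By Corollary \ref{capacity sums to at least 2} I may therefore reduce (WLOG) to $c_1(u) = 0$, hence $c_2(u) \ge 2$ and $u$ must lie in $V_2$. For $\mu = 1$ (when $u$ is a leaf), I plan to apply Corollary \ref{add a flag or two}(I) to $(G - u, W_1, W_2)$ with $W_2(v) \to W_2(v) + 1$: the resulting desired coloring leaves one unit of $V_2$-slack at $v$, so placing $u \in V_2$ is compatible whether $v \in V_1$ (trivial extension) or $v \in V_2$ (the slack absorbs the new $V_2$-edge $uv$, and $c_2(u) \ge 2$ bears $u$'s self-contribution; forestness is automatic since $u$ is a leaf).

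The main obstacle is $\mu = 2$: the double edge $uv$ additionally forces $v \in V_1$. If some desired coloring of $G - u$ already places $v \in V_1$, I extend by $u \in V_2$ and finish. Otherwise, the weighted graph $(G - u, W_1, W_2^+)$ with $W_2^+(v) := d_2 + 1$ (so $c_2^+(v) = 0$, pinning $v$ in $V_1$) has no desired coloring, and by minimality of $G$ it contains a critical subgraph $\widehat{G}$ with $\rho(\widehat{G}, W_1, W_2^+) \le -\beta$. The case $v \notin V(\widehat{G})$ is ruled out because then $(\widehat{G}, W_1, W_2) = (\widehat{G}, W_1, W_2^+)$ would be a proper weighted subgraph of $G$ violating Lemma \ref{gap lemma}; so $v \in V(\widehat{G})$.

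Adjoining $u$ and its two edges $uv$ produces $\widehat{G}^+ := \widehat{G} + u$, a weighted subgraph of $(G, W_1, W_2)$ satisfying
\[ \rho(\widehat{G}^+, W_1, W_2) \;=\; \rho(\widehat{G}, W_1, W_2^+) + \beta c_2(v) + \beta(c_2(u) - 1) - 2 \;\le\; \beta(c_2(u) + c_2(v)) - 2\beta - 2 \;\le\; -2\beta, \]
using $c_2(u), c_2(v) \le d_2 + 1 = 1/\beta$. Either $\widehat{G}^+ = G$, in which case $\rho(G) \le -2\beta < -\beta$ contradicts the choice of $G$ as a counterexample; or $\widehat{G}^+$ is a proper subgraph, in which case Lemma \ref{gap lemma} yields $\rho(\widehat{G}^+, W_1, W_2) > \alpha - \beta > -2\beta$, also a contradiction.
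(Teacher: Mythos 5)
Your proposal is correct and takes essentially the same route as the paper: delete $u$, dispose of the case $c_1(u),c_2(u)\geq 1$ directly, handle the simple-edge case with Corollary \ref{add a flag or two}(I), and in the parallel-edge case pin the neighbor's color by zeroing a capacity, extract a critical subgraph, add $u$ back, and contradict the potential lower bound. One caveat: the two colors are not symmetric, so ``WLOG $c_1(u)=0$'' is not a true symmetry and the case $c_2(u)=0$ needs its own (analogous) estimate --- it works because $\alpha(d_1+1)<1$ plays the role of $\beta(d_2+1)=1$ (the paper treats the two indices separately via Fact \ref{basic arithmetic with potential}.2--4) --- and your final case split ($\widehat{G}^+=G$ versus proper) is more cleanly replaced by a single appeal to Corollary \ref{at least -beta}, which also avoids having to verify the edge-count hypothesis of Lemma \ref{gap lemma}.
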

\begin{proof}
By way of contradiction, suppose there exists a vertex $u$ such that $N^m(u) = \{x\}$ or $N^m(u) = \{x, x\}$.
Let $H = G - u$.
By criticality of $G$, $H$ has a desirable coloring.
If $c_1(u), c_2(u) \geq 1$, then the coloring can be extended to a desirable coloring of $G$ by giving $u$ the opposite color of $x$.
This is a contradiction, so there exists an $i$ such that $u$ is $i$-null.

Suppose first that $ux$ is a simple edge.
By Corollary \ref{capacity sums to at least 2} we have $c_{3-i}(u) \geq 2$.
By Corollary \ref{add a flag or two}.I, there is a desirable coloring of weighted graph $(H, W_1', W_2')$ where $W_j' = W_j$ except $W_{3-i}'(x) = W_{3-i}(x) + 1$.
By construction, such a coloring extends to a desirable coloring of $G$, which is a contradiction.

So $ux$ is a parallel edge.
Let $(H, W_1'', W_2'')$ be the weighted graph where $W_j'' = W_j$, except $c_{3-i}''(x) = 0$.
If $i=1$, then by Fact \ref{basic arithmetic with potential}.2.c and Fact \ref{basic arithmetic with potential}.3 we have $\rho(\{u\}) \leq 1-\beta$ and $\rho(\{x\}, W_1, W_2) - \rho(\{x\}, W_1'', W_2'') \leq 1$.
If $i=2$, then by Fact \ref{basic arithmetic with potential}.2.d and Fact \ref{basic arithmetic with potential}.4 we have $\rho(\{u\}) \leq 1-\alpha$ and $\rho(\{x\}, W_1, W_2) - \rho(\{x\}, W_1'', W_2'') \leq 1-\alpha+\beta$.
So in either case we have $\rho(\{u\}) + \rho(H, W_1, W_2) - \rho(H, W_1'', W_2'') \leq 2-\beta$.

We claim that $(H, W_1'', W_2'')$ has a desirable coloring.
By way of contradiction, suppose $(H, W_1'', W_2'')$ contains a critical weighted subgraph $(\widehat{H}, \widehat{W_1}, \widehat{W_2})$.
As $G$ does not contain a proper weighted subgraph that is critical, we have $c \in V(\widehat{H})$.
By the minimality of $G$, we have $\rho(\widehat{H}, \widehat{W_1}, \widehat{W_2}) \leq -\beta$.
As it is a subgraph, we have $\rho(G[V(\widehat{H})], W_1'', W_2'') \leq \rho(\widehat{H}, \widehat{W_1}, \widehat{W_2})$.
So we have
\begin{eqnarray*}
 \rho(\widehat{H} + u, W_1, W_2) & \leq & \rho(V(\widehat{H}), W_1'', W_2'') + \rho(\{u\}) + \rho(V(\widehat{H}), W_1, W_2) - \rho(V(\widehat{H}), W_1'', W_2'') - 2 \\
				& \leq & -\beta + (2-\beta) -2 < -\beta.\\
\end{eqnarray*}
This contradicts Corollary \ref{at least -beta}, which proves the claim.
The desirable coloring of $(H, W_1'', W_2'')$ clearly extends to a desirable coloring of $(G, W_1, W_2)$, contradicting the choice of $G$.
\end{proof}

\begin{lemma}\label{degree 2}
If $d(u) = 2$, then $u$ is doubly-constrained or there exists an $i$ such that $u$ is $i$-null.
\end{lemma}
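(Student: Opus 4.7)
The plan is to argue by contradiction. Suppose there is a vertex $u$ with $d(u)=2$ that is neither doubly-constrained nor $i$-null for either $i$. Since $u$ is not $i$-null for any $i$, both $c_1(u)\geq 1$ and $c_2(u)\geq 1$. Using the characterization from Section~\ref{reducible configurations section} that $u$ is doubly-constrained iff $1\leq c_i(u)<d(u)$ for each $i$, the failure of this condition (combined with $c_i(u)\geq 1$) forces $c_i(u)\geq d(u)=2$ for some index. By the symmetry between the two color classes, I will assume $c_1(u)\geq 2$ and $c_2(u)\geq 1$.

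By Lemma~\ref{2 neighbors}, $u$ has two distinct neighbors, which I will call $x$ and $y$ (so both incident edges are simple). Let $H=G-u$; by criticality of $G$, the weighted graph $(H,W_1,W_2)$ admits a desired coloring $V(H)=V_1\cup V_2$. I claim this coloring always extends to a desired coloring of $G$, which contradicts the criticality of $G$.

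The extension is by case analysis on the colors of $x$ and $y$. If $x,y\in V_1$, place $u$ into $V_2$: the capacity requirement $|N^m(u)\cap V_2|\leq c_2(u)-1$ becomes $0\leq c_2(u)-1$, which holds by $c_2(u)\geq 1$. If $x,y\in V_2$, place $u$ into $V_1$ by the symmetric computation using $c_1(u)\geq 1$. If $x$ and $y$ lie in different classes, place $u$ into $V_1$: the requirement becomes $1\leq c_1(u)-1$, i.e.\ $c_1(u)\geq 2$, which is exactly the slackness secured above. In every case $u$ has at most one same-colored neighbor under the extension, so the new monochromatic edge (if any) is pendant in the monochromatic subgraph and no cycle is created; hence each $G[V_j]$ remains a forest. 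This yields a desired coloring of $G$, the required contradiction.

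The only step that is not purely mechanical is the split-class case, because there we cannot avoid adding a same-color neighbor to $u$. This is precisely where the hypothesis ``$u$ is not doubly-constrained'' is used: it produces the slackness $c_1(u)\geq 2$ (or $c_2(u)\geq 2$) needed to absorb that new neighbor. Lemma~\ref{2 neighbors} is used only to rule out the multi-edge scenario $N^m(u)=\{x,x\}$, which would otherwise require a more delicate argument parallel to that in the proof of Lemma~\ref{2 neighbors}.
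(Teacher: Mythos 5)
There is a genuine gap in the extension step. When you add $u$ to a color class $V_j$, you verify only $u$'s own capacity constraint, but a desired coloring also requires that every \emph{neighbor} of $u$ in $V_j$ still satisfies its constraint: if $x\in V_j$, then adding $u$ increases $|N^m(x)\cap V_j|$ by one, and the coloring of $H=G-u$ only guarantees $|N^m_H(x)\cap V_j|+W_j(x)\le d_j$, which may already be tight. Your split-class case is exactly where this fails: whichever class you place $u$ in, the neighbor sharing that class may be saturated, so the ``extension'' need not be a desired coloring of $G$. The slackness $c_1(u)\ge 2$ absorbs the new neighbor on $u$'s side only; nothing in your argument absorbs it on $x$'s or $y$'s side. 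This is not a minor omission --- it is the entire difficulty of the lemma, and it is why a bare appeal to criticality of $G$ cannot suffice.

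The paper handles this by reserving capacity on the neighbors in advance: instead of coloring $(H,W_1,W_2)$, it colors $(H,W_1',W_2')$ where the weight of \emph{both} $x$ and $y$ is incremented by one in the color that $u$ might join (color $2$ when $c_2(u)\ge 2$, color $1$ when $c_1(u)\ge 2$ and $c_2(u)=1$). Obtaining such a coloring is nontrivial and is exactly what Corollary \ref{add a flag or two}.II.B and II.C provide, via the gap lemma; part II.B is also where the hypothesis $d_2\ge 2d_1+2$ enters (through Fact \ref{basic arithmetic with potential}.6), and part II.C needs the bound $\rho(\{u\})\le 2-2\alpha$, which forces the case split you did not make (the two cases are not symmetric, since reserving a unit of color-1 weight costs $\alpha$ while color-2 weight costs $\beta$). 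With the reserved weight in place, placing $u$ opposite to a monochromatic $\{x,y\}$, and otherwise into the reserved color, keeps both $u$ and its neighbors within capacity, and your forest observation (the new monochromatic edge is pendant) then completes the argument. To repair your proof you would need to reproduce this reservation step; as written, the proposal does not establish the lemma.
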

\begin{proof}
Let $N(u) = \{x, y\}$.
By Lemma \ref{2 neighbors} we have $x \neq y$.
We have that $c_1(u) = c_2(u) = 1$ if and only if $u$ is doubly-constrained.
By way of contradiction, assume that $\max(c_1(u), c_2(u)) \geq 2$ and $\min(c_1(u), c_2(u)) \geq 1$.
Let $H = G - u$.

\textbf{Case 1:} $c_2(u) \geq 2$ and $c_1(u) \geq 1$.
By Corollary \ref{add a flag or two}.II.B there is a desirable coloring $V_1' \cup V_2'$ of the weighted graph $(H, W_1', W_2')$ where $W_j' = W_j$ except $W_2'(x) = W_2(x)+1$ and $W_2'(y) = W_2(y)+1$.
This desirable coloring can be extended to a desirable coloring of $G$ by adding $u$ to $V_1'$ if $\{x,y\} \subseteq V_2'$ and adding $u$ to $V_2'$ otherwise.
This contradicts the criticality of $G$.

\textbf{Case 2:} $c_1(u) \geq 2$ and $c_2(u) \geq 1$.
As we are not in case 1, we may assume that $c_2(u) = 1$, and therefore by Fact \ref{basic arithmetic with potential}.2.d $\rho(\{u\}) \leq 1-\alpha+\beta$.
As $\alpha + \beta \leq 1$ we have $\rho(\{u\}) \leq 2 - 2\alpha$.
By Corollary \ref{add a flag or two}.II.C there is a desirable coloring $V_1'' \cup V_2''$ of the weighted graph $(H, W_1'', W_2'')$ where $W_j'' = W_j$ except $W_1''(x) = W_1(x)+1$ and $W_1''(y) = W_1(y)+1$.
This desirable coloring can be extended to a desirable coloring of $G$ by adding $u$ to $V_2''$ if $\{x,y\} \subseteq V_1''$ and adding $u$ to $V_1''$ otherwise.
This contradicts the criticality of $G$.

%
\end{proof}

It is well-known that potential functions are submodular.
To see this, let $A,B \subseteq V(G)$.
Let $e(A\setminus B, B \setminus A)$ denote the number of edges with an endpoint in $A \setminus B$ and another endpoint in $B \setminus A$.
By counting vertices and edges we have that 
$$ \rho(A \cup B) + \rho(A \cap B) = \rho(A) + \rho(B) - e(A \setminus B, B \setminus A) \leq \rho(A) + \rho(B).$$
In Lemma \ref{degree 3 not incident with parallel edge} we will use that our potential function satisfies the stronger property of subadditivity on proper weighted subgraphs of $G$.
That is, if $A$ or $B$ is not all of $V(G)$, then $\rho(A \cap B) \geq 0$ (this is trivial if $A \cap B = \emptyset$, otherwise apply Lemma \ref{gap lemma}).
Therefore if $A$ or $B$ is not all of $V(G)$ we have $ \rho(A \cup B) \leq \rho(A) + \rho(B)$.

\begin{lemma}\label{degree 3 not incident with parallel edge}
If $u$ is a three-two-two, then it is not in a parallel edge.
\end{lemma}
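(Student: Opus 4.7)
Plan:

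By the multiplicity bound from Section~\ref{opening statements section} and Lemma~\ref{2 neighbors}, a three-two-two $u$ incident to a parallel edge must satisfy $N^m(u) = \{x, x, y\}$ for distinct vertices $x, y$, so $d(u) = 3$ and $e(G) - e(G - u) = 3$. Set $H := G - u$.

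I would apply Corollary~\ref{add a flag or two}.II.B with outside vertex $u$, neighbor list $(u_1, u_2) = (x, y)$, and $i_1 = i_2 = 2$; condition (II.B) is immediate, and condition (II.A) also holds since $e(G) = e(H) + k + 1$. This yields a desirable coloring $V_1 \cup V_2$ of $(H, W_1, W_2'')$ with $W_2''(x) = W_2(x) + 1$ and $W_2''(y) = W_2(y) + 1$. I would then extend to $G$ by placing $u$ in the color class opposite to that of $x$, as forced by the parallel edge $ux$. The extension satisfies $u$'s own constraints since $c_1(u), c_2(u) \geq 2$ accommodate $u$'s at-most-one monochromatic neighbor, and $u$ enters as a pendant so no monochromatic cycle forms.

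Three of the four $(x,y)$ color configurations extend immediately: if $y$ shares a color with $x$, then $u$ has no monochromatic neighbor in its class; if $x \in V_1$ and $y \in V_2$ (so $u \in V_2$), then the $+1$ on $W_2(y)$ absorbs the new monochromatic edge $uy$. The delicate case is $x \in V_2,\, y \in V_1$, where placing $u \in V_1$ adds a color-$1$ neighbor to $y$ with no reserved slack. In this case, however, the $+1$ on $W_2(x)$ guarantees strict slack, namely $|N^m(x) \cap V_2|_H + W_2(x) \leq d_2 - 1$; I would exploit this by swapping $x$ from $V_2$ to $V_1$, thereby placing $u$ in $V_2$ instead and reducing to an already-handled configuration. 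Verifying that the swap preserves a desirable coloring of $H''$ --- i.e.\ that $x$'s color-$1$ capacity admits the move and no new monochromatic cycle is created in $V_1$ --- is the heart of the argument.

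The main obstacle is precisely this swap-and-recurse step. To justify it, I would exploit the subadditivity of $\rho$ on proper weighted subgraphs (noted just before the lemma statement) together with Lemma~\ref{gap lemma}: the alternative, in which the swap fails, must produce a proper weighted subgraph on $\{x, y\}$ together with the color-$1$ neighborhood of $x$ whose potential, bounded below by Lemma~\ref{gap lemma} and combined submodularly with $\rho(G - u)$, forces $\rho(G) \leq -\beta$, contradicting the minimality of $G$.
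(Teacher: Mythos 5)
Your reduction to Corollary~\ref{add a flag or two} is fine as far as it goes: with $N^m(u)=\{x,x,y\}$ both (II.A) and (II.B) apply, so $(H,W_1,W_2'')$ with $W_2$ raised by one at $x$ and at $y$ does have a desirable coloring, and your extension works in the three configurations you list. The genuine gap is exactly the fourth configuration, $x\in V_2$, $y\in V_1$, and your proposed fix does not close it. The parallel edge forces $u\in V_1$, and the only slack you reserved is in color~$2$; your remedy is to recolor $x$ from $V_2$ to $V_1$, but nothing controls $x$'s color-$1$ side: $x$ may be $1$-null ($c_1(x)=0$), may already have $c_1(x)$ or more neighbors in $V_1$, or the swap may close a monochromatic cycle in $G[V_1]$ --- and since Corollary~\ref{add a flag or two} only guarantees \emph{some} coloring of $(H,W_1,W_2'')$, you cannot steer it away from this case. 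The final appeal to subadditivity and Lemma~\ref{gap lemma} is not an argument: you never exhibit a concrete weighted subgraph whose potential is small enough to contradict Corollary~\ref{at least -beta} (for instance, $c_1(x)=0$ only gives $\rho(\{x\})\le 1-\beta$, which is far from what is needed), so ``the swap fails, hence $\rho(G)\le-\beta$'' does not follow.

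The paper resolves precisely this obstruction by never trying to repair a single coloring. It forms two different auxiliary weighted graphs on $H=G-u$: one raising \emph{both} $W_1$ and $W_2$ at the simple neighbor $v$ (so $u$ can always take the color opposite $x$), and one setting $c_1(x)=0$ while raising $W_1(v)$ (so $u$ can safely take color~$1$). Since a desirable coloring of either would extend to $G$, each contains a critical subgraph; their vertex sets $S_v\ni v$ and $S_x\ni x$ satisfy $\rho(S_v)\le\alpha$ and $\rho(S_x)\le 1-\alpha$ (after showing $v\notin S_x$), and only then does subadditivity, applied to $S_v\cup S_x$ together with $u$ and its three edges, give $\rho<-\beta$, contradicting Corollary~\ref{at least -beta}. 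If you want to salvage your route, you would need an analogous second auxiliary reweighting that forces $x$ away from color~$1$'s conflict (e.g.\ zeroing $c_1(x)$) rather than a post hoc swap.
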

\begin{proof}
Let $u$ be as in the statement of the lemma.
Recall that means that $d(u) = 3$ and $c_1(u), c_2(u) \geq 2$.
By way of contradiction, suppose $N^m(u) = \{v, x, x\}$.
Let $H = G - u$.

Let $(H, W_1^v, W_2^v)$ be the weighted graph constructed by setting $W_j^v = W_j$ except $W_1^v(v) = W_1(v) + 1$ and $W_2^v(v) = W_2(v) + 1$.
There is no desirable coloring of $(H, W_1^v, W_2^v)$, as such a coloring could be extended to a desirable coloring of $G$ by giving $u$ the opposite color of $x$, which contradicts the choice of $G$.
So $(H, W_1^v, W_2^v)$ contains critical subgraph $(G_v, \widehat{W_1^v}, \widehat{W_2^v})$.
Let $S_v = V(G_v)$.
As $G$ does not contain a proper weighted subgraph that is critical, $v \in S_v$.
By the minimality of $G$, we have $\rho(G_v, \widehat{W_1^v}, \widehat{W_2^v}) \leq -\beta$.
As it is a subgraph, $\rho(S_v, W_1^v, W_2^v) \leq \rho(G_v, \widehat{W_1^v}, \widehat{W_2^v})$.
So $S_v$ is a vertex subset that contains $v$ and has potential bounded by $\rho(S_v, W_1, W_2) \leq \rho(S_v, W_1^v, W_2^v) + \alpha + \beta \leq \alpha$.

Let $(H, W_1^x, W_2^x)$ be the weighted graph constructed by setting $W_j^x = W_j$ except $c_1^x(x) = 0$ and $W_1^x(v) = W_1(v) + 1$.
There is no desirable coloring of $(H, W_1^x, W_2^x)$, as such a coloring could be extended to a desirable coloring of $G$ by giving $u$ color $1$, which contradicts the choice of $G$.
So $(H, W_1^x, W_2^x)$ contains critical subgraph $(G_x, \widehat{W_1^x}, \widehat{W_2^x})$.
Let $S_x = V(G_x)$.
By the minimality of $G$, we have $\rho(G_x, \widehat{W_1^x}, \widehat{W_2^x}) \leq -\beta$.
As it is a subgraph, $\rho(S_x, W_1^x, W_2^x) \leq \rho(G_x, \widehat{W_1^x}, \widehat{W_2^x})$.
By Fact \ref{basic arithmetic with potential}.4, $\rho(\{x\}, W_1, W_2) - \rho(\{x\}, W_1^x, W_2^x) \leq 1-\alpha + \beta$.

As $G$ does not contain a critical proper weighted subgraph, $S_x$ contains $x$ or $v$.
Moreover, by Corollary \ref{add a flag or two}.I, $S_x$ contains $x$.
If $S_x$ also contains $v$, then 
\begin{eqnarray*}
\rho(G[S_x] + u, W_1, W_2) &\leq & \rho(G_x, W_1^x, W_2^x) + \rho(\{x,v\}, W_1, W_2) - \rho(\{x,v\}, W_1^x, W_2^x) + \rho(\{u\}) - 3\\
		& \leq & -\beta + (1-\alpha + \beta) + (\alpha) + (2-\alpha) - 3 \\
		& \leq & -\alpha < -\beta,
\end{eqnarray*}
which contradicts Corollary \ref{at least -beta}.
So $v$ is not in $S_x$ and we have 
$$\rho(S_x, W_1, W_2) \leq \rho(V(G_x), W_1^x, W_2^x) + 1-\alpha+\beta \leq 1-\alpha. $$

Since $u$ is not in $S_x$ or $S_v$, we apply subadditivity to say that $\rho(S_x \cup S_v) \leq \alpha + (1-\alpha) = 1$.
Therefore we have 
$$\rho(G[S_x \cup S_v] + u, W_1, W_2) \leq 1 + (2-\alpha) - 3 < -\beta,$$ 
which contradicts Corollary \ref{at least -beta}.
\end{proof}

\begin{lemma}\label{triple-three next to a double constrained}
If $d_1 \geq 1$ and $u$ is a three-two-two, then there exists $v \in N(u)$ such that $v$ is doubly-constrained.
\end{lemma}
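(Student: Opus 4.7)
The plan is a proof by contradiction. Assume $u$ is a three-two-two vertex with no doubly-constrained neighbor. By Lemma~\ref{degree 3 not incident with parallel edge}, $N(u) = \{x_1, x_2, x_3\}$ consists of three distinct vertices, and for each $j$ there is an $i_j \in \{1, 2\}$ such that $x_j$ is $i_j$-slack or $i_j$-null.

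Set $H = G - u$, and let $(H, W_1'', W_2'')$ be the weighted graph obtained by incrementing $W_{i_j}(x_j)$ by one for each $j$. Since $k = 3$ and $d_1 \geq 1$, Corollary~\ref{add a flag or two}.II.D applies and yields a desirable coloring $V_1'' \cup V_2''$ of this weighted graph. I would then extend this coloring to $G$ by placing $u$ in a color class $V_c$ with $|N(u) \cap V_c''| \leq 1$; such a $c$ exists by pigeonhole since $|N(u)|=3$. The capacity bound at $u$ follows from $c_c(u) \geq 2$, and the forest condition is automatic since $u$ enters $G[V_c]$ with degree at most one. What remains is to verify the capacity bound $|N^m_G[x_j] \cap V_c| \leq c_c(x_j)$ at the at most one neighbor $x_j \in V_c \cap N(u)$, and this is precisely what the bump at $x_j$ is designed to supply.

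The main obstacle is choosing the sequence $i_j$ in advance so that this final inequality is satisfied for whichever coloring the corollary returns and whichever color $c$ the pigeonhole selects. The guiding principle is to place the bump at $x_j$ exactly where $x_j$'s capacity is tight: if $x_j$ is $i$-null then $x_j \in V_{3-i}$ is forced and setting $i_j = 3-i$ supplies the needed slack; if $x_j$ is $i$-slack with $c_i(x_j) \geq d(x_j)+1$, then intrinsic slack in $V_i$ handles that color regardless of the bump; and if $x_j$ is $i$-slack only marginally (i.e.\ $c_i(x_j) = d(x_j)$), the bump must be set to $i_j = i$ to provide the single unit of slack needed to absorb the new edge $ux_j$. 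The trickiest sub-case, which I expect to consume most of the argument, is when some $x_j$ is marginally $i$-slack for both $i=1$ and $i=2$ simultaneously (so $c_1(x_j) = c_2(x_j) = d(x_j)$), since one bump at $x_j$ cannot provide slack in both classes at once. Resolving this likely requires either arguing that the bumped constraint forces $x_j$ into the bumped class in any desirable coloring of $(H, W_1'', W_2'')$, or invoking the corollary a second time with a different bump sequence and combining the two outcomes to obtain a desirable coloring of $G$, contradicting criticality.
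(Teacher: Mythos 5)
Your skeleton matches the paper's proof---delete $u$, bump weights at the three neighbors, invoke Corollary \ref{add a flag or two}.II.D (with $k=3$, $d_1\geq 1$), extend by pigeonhole, and repair at most one bad edge by flipping---but you leave the decisive step unresolved, and the bump-placement rule you sketch fails in one case. When $x_j$ is marginally $i$-slack (so $c_i(x_j)=d(x_j)$ and $c_{3-i}(x_j)\geq 1$) but possibly $(3-i)$-constrained, you propose bumping color $i$. Then nothing protects $x_j$ in color $3-i$: the coloring returned by the corollary may place $x_j\in V_{3-i}$ with $|N^m_H[x_j]\cap V_{3-i}|=c_{3-i}(x_j)$ exactly, and if $u$ also lands in $V_{3-i}$ the edge $ux_j$ overloads $x_j$. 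A flip of $x_j$ into $V_i$ is not safe there, since $x_j$ may already have neighbors in $V_i$, so moving it can push those neighbors over capacity or create a monochromatic cycle. (The sub-case you single out as trickiest---marginal slack in both colors---is in fact harmless; the dangerous configuration is marginal slack in one color combined with being constrained in the other.)

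The paper resolves this by making the choice uniform and in the opposite direction: for each $x_i$ that is $j_i$-null or $j_i$-slack, it bumps $W_{3-j_i}(x_i)$, i.e.\ the \emph{other} color. Then whenever $x_i$ lies in the possibly constrained color $3-j_i$, the bump absorbs the new edge $ux_i$; the only possible violation is $x_i\in V_{j_i}$ with the slack tight, and that forces $N^m[x_i]\subseteq V_{j_i}$ and $c_{3-j_i}(x_i)\geq 1$, so flipping $x_i$ to $V_{3-j_i}$ leaves it in no monochromatic edge and harms no other vertex; since $u$ is in at most one monochromatic edge, this violating pair is unique and one flip suffices. Neither of your proposed rescues---arguing the bump forces $x_j$ into the bumped class (false in general), or running the corollary twice and combining the two colorings (there is no sound way to merge them)---closes this hole, so as written your argument has a genuine gap at exactly the point you flagged.
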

\begin{proof}
Let $u$ be as in the statement of the lemma.
Recall that means that $d(u) = 3$ and $c_1(u), c_2(u) \geq 2$.
By Lemma \ref{degree 3 not incident with parallel edge} the neighbors of $u$ are distinct.
By way of contradiction, suppose that $N(u) = \{x_1, x_2, x_3\}$ and for each $i$ there exists a color $j_i$ such that $x_i$ is $j_i$-null or $j_i$-slack.
Let $H = G - u$.
By Corollary \ref{add a flag or two}.II.D, there is a desirable coloring $V_1' \cup V_2'$ of weighted graph $(H, W_1', W_2')$ where $W_j' = W_j$ except $W_{3-j_i}'(x_i) = W_{3-j_i}(x_i) + 1$ for $i \in \{1,2,3\}$.

Consider extending the desirable coloring of $(H, W_1', W_2')$ to a coloring of $G$ by giving $u$ the color that appears the least on its neighbors, which we will call the ``first attempt coloring.''
We will show that if the first attempt coloring is not a desirable coloring, then $(H, W_1', W_2')$ satisfies certain properties, and there exists a desirable coloring of $G$ that we will call the ``flipped coloring.''

As $d(u) = 3$, in the first attempt coloring $u$ is in at most one monochromatic edge and therefore not in a monochromatic cycle.
Thus, the only way that the first attempt coloring is not a desirable coloring is if for some $i,j$ we have $x_i \in V_j$ and $|N^m[x_i] \cap V_j| > c_j(x_i)$.
Because $(H, W_1', W_2')$ is a desirable coloring, by construction we have $j = j_i$ and $ux_i$ is monochromatic.
Moreover, the desirable coloring $(H, W_1', W_2')$ with $x_i \in V_j$ implies that $x_i$ is not $j$-null.
By the assumptions of the case, this implies that $x_i$ is $j$-slack.

As $u$ is in at most one monochromatic edge, this implies that if such an $(x_i,j)$ exist, then it is unique.
By definition of $j$-slack and $|N^m[x_i] \cap V_j| > c_j(x_i)$, we have that $c_{3-j}(x_i) \geq 1$ and $N^m[x_i] \subseteq V_j$.
We construct the flipped coloring from the first attempt coloring by removing $x_i$ from $V_j$ and adding it to $V_{3-j}$.
By the above, $x_i$ is no longer in any monochromatic edges.
As $i,j$ is the unique obstruction to the first attempt coloring being a desirable coloring, we conclude that the flipped coloring is a desirable coloring of $G$.
This contradicts the choice of $G$.
\end{proof}

\begin{lemma}\label{triple three next to two somehow-constrained}
If $u$ is a triple-three with $N(u) = \{x_1, x_2, x_3\}$, then there exists $\{s,t\} \subset \{1,2,3\}$ such that $x_s$ and $x_t$ are somehow-constrained.
\end{lemma}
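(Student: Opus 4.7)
I would argue by contradiction: assume at most one of $u$'s three neighbors is somehow-constrained, so at least two are not, and relabel so that $x_1$ and $x_2$ are both not somehow-constrained. Since $u$ being triple-three forces $c_1(u), c_2(u) \geq 3$ and hence $d_1 \geq 2 \geq 1$, and since every triple-three is a three-two-two, Lemma~\ref{triple-three next to a double constrained} supplies a doubly-constrained neighbor of $u$; as $x_1, x_2$ are not even somehow-constrained, this forces $x_3$ to be doubly-constrained. Unpacking the hypothesis: for each $i \in \{1,2\}$ and each color $j \in \{1,2\}$, $x_i$ is $j$-slack or $j$-null, and by Corollary~\ref{capacity sums to at least 2} not both null at once.

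Let $H := G - u$. By Lemma~\ref{degree 3 not incident with parallel edge}, $u$ is not in a parallel edge, so $e(G) - e(H) = d(u) = 3$. First I would pick any $j_i \in \{1,2\}$ for $i \in \{1,2\}$ and apply Corollary~\ref{add a flag or two}(II.A) with $k = 2$ (the condition $e(G) \geq e(H) + k + 1$ holds with equality) and weight bumps $W_{3-j_i}^*(x_i) = W_{3-j_i}(x_i) + 1$, producing a desirable coloring $V_1 \cup V_2$ of $(H, W_1^*, W_2^*)$. I would then extend to $G$ by placing $u$ in the color $c$ opposite to that of $x_3$. The capacity at $u$ is automatic since $c_c(u) \geq 3 = d(u)$ and $u$ has at most two neighbors in $V_c$ (as $x_3 \notin V_c$); the edge $ux_3$ is bichromatic; and any capacity violation at $x_i \in V_c$ with $i \in \{1,2\}$ is fixed by the flip argument of Lemma~\ref{triple-three next to a double constrained}: $x_i$ cannot be $c$-null (else $x_i \notin V_c$), so $x_i$ is $c$-slack, the violation forces $N^m[x_i] \subseteq V_c$, and $c_{3-c}(x_i) \geq 1$ lets us recolor $x_i$ to $V_{3-c}$ safely.

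The only leftover obstruction is a monochromatic cycle through $u$ in $V_c$, which requires both $x_1, x_2 \in V_c$ joined by a path $P \subseteq H[V_c]$. To close this case I would switch to the alternative extension that places $u$ in $V_{3-c}$ alongside $x_3$: no cycle can appear since $u$ has only one same-color neighbor ($x_3$); the capacity at $u$ is again free; and the only possible residual obstruction is a capacity violation at $x_3$. Because $x_3$ is merely doubly-constrained rather than slack, the flip is not available, and the violation instead yields the structural equality $|N^m[x_3] \cap V_{3-c}|_H = c_{3-c}(x_3)$, i.e., $x_3$ is saturated in $V_{3-c}$. To convert the simultaneous ``cycle-plus-saturation'' configuration into a contradiction I would mirror the subadditivity argument of Lemma~\ref{degree 3 not incident with parallel edge}: apply Corollary~\ref{add a flag or two} once more with an additional weight bump at $x_3$ (in color $3-c$) to force $x_3$ into $V_c$, extract a critical subgraph on $V(P) \cup \{x_3\}$, and then combine the two critical subgraphs (from the two extension attempts) with $u$ using $\rho(\{u\}) \leq 2-\alpha$ and the three edges incident to $u$ to produce a weighted subgraph of $G$ whose potential drops to at most $-\beta$, contradicting Corollary~\ref{at least -beta}.

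The hard part will be this last step. The slackness of $x_1, x_2$ in both colors is easy to convert into potential savings, and the existence of the $x_1$--$x_2$ path together with the saturation at $x_3$ supplies the structural content; the delicate aspect is selecting the weight bumps so that the critical subgraphs obtained from the two failing extensions overlap in the right way, and then using $c_1(u), c_2(u) \geq 3$ fully (as opposed to the weaker $\geq 2$ available in the three-two-two setting of Lemma~\ref{triple-three next to a double constrained}) to absorb the three incident edges at $u$ plus the saturation deficit at the doubly-constrained $x_3$ into a single bound below $-\beta$.
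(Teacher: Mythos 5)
Your setup is sound (deducing $d_1\geq 2$ from $c_1(u)\geq 3$, invoking Lemma~\ref{triple-three next to a double constrained} to make the third neighbor doubly-constrained, and using Corollary~\ref{add a flag or two}(II.A) with $k=2$ to color $H=G-u$ with weight bumps at the two unconstrained neighbors), but the proof is not complete: the step you yourself flag as ``the hard part'' is exactly where the paper's key idea lives, and your sketch of it does not work as stated. Your plan is to extract critical subgraphs ``from the two failing extensions,'' but your two extension attempts start from colorings that \emph{do exist} (supplied by Corollary~\ref{add a flag or two}); what fails are structural obstructions (a monochromatic $x_1$--$x_2$ path plus saturation at $x_3$), and a failed extension of an existing coloring does not produce a critical weighted subgraph. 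Critical subgraphs only arise from auxiliary weighted graphs that provably admit \emph{no} desirable coloring, and nothing in your outline produces such graphs: bumping $W_{3-c}(x_3)$ by one does not ``force $x_3$ into $V_c$'' (its capacity need not drop to zero), there is no mechanism forcing a critical subgraph to contain the path $P$ or to contain $x_1$ and $x_2$ simultaneously, and without that containment the subadditivity-plus-edges-at-$u$ accounting cannot be carried out. (A secondary issue: in your first extension both $x_1$ and $x_2$ may need flipping at once, and if they are adjacent the two flips can collide; the paper avoids this by ensuring the problematic monochromatic edge at the relevant vertex is unique.)

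The paper closes the argument with two different auxiliary constructions on $H$. First, it bumps \emph{both} weights at the doubly-constrained neighbor; a flipped-coloring argument shows this weighted graph has no desirable coloring, so it contains a critical subgraph $S_1$ through that vertex with $\rho(S_1)\leq -\beta+\alpha+\beta=\alpha$. Second---and this is the missing trick---it forms $G_{2,3}$ by \emph{adding the edge} $x_2x_3$ between the two non-somehow-constrained neighbors: any monochromatic cycle through $u$ in an attempted extension would use both $x_2$ and $x_3$ and hence correspond to a monochromatic cycle through the new edge, so the cycle obstruction vanishes, and a flip handles the unique possible capacity violation; thus $G_{2,3}$ has no desirable coloring, and its critical subgraph must use the added edge, giving a set $S_{2,3}\supseteq\{x_2,x_3\}$ with $\rho(S_{2,3})\leq 1-\beta$. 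Subadditivity then yields $\rho(G[S_1\cup S_{2,3}]+u)\leq \alpha+(1-\beta)+(2-\alpha)-3=-\beta$, contradicting Corollary~\ref{at least -beta}. Without an analogue of the edge-addition construction (or some other device that certifies non-colorability and pins $x_1,x_2$ inside one low-potential set), your final step remains a gap.
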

\begin{proof}
The outline of this argument follows similarly to the proof of Lemma \ref{degree 3 not incident with parallel edge}.
However, the details are different and sometimes more complicated, including repeated use of flipped colorings.

Let $u$ be as in the statement of the lemma.
Recall that means that $d(u) = 3$ and $c_1(u), c_2(u) \geq 3$.
By Lemma \ref{degree 3 not incident with parallel edge} the neighbors of $u$ are distinct.
Without loss of generality, by Lemma \ref{triple-three next to a double constrained} we may assume that $x_1$ is doubly-constrained.
By way of contradiction, let us assume that $x_2$ and $x_3$ are not somehow-constrained.
Let $H = G - u$.

Let $(G_1, W_1^{(1)}, W_2^{(1)})$ be the weighted graph formed from $H$ by setting $W_j^{(1)} = W_j$, except $W_j^{(1)}(x_1) = W_j(x_1) + 1$ for each $j$.
We claim that $G_1$ contains a critical subgraph.
By way of contradiction, suppose $G_1$ has a desirable coloring $V_1^{(1)} \cup V_2^{(1)}$.
We extend the desirable coloring of $G_1$ to a ``first attempt'' coloring of $G$ by giving $u$ the color that appears the least on its neighbors.
If the first attempt coloring is not a desirable coloring, then there exists an $x_i \in V_j^{(1)}$ such that $|N^m[x_i] \cap V_j^{(1)}| > c_j(x_i)$.
Because it is a desirable coloring of $G_1$, we must have that $ux_i$ is the unique monochromatic edge containing $u$ and $i \in \{2,3\}$.
So $x_i$ is not $j$-null and by assumption $x_i$ is not $j$-constrained, so $N^m[x_i] \subseteq V_j^{(1)}$, $c_j(x_i) = d(x_i)$, and therefore $c_{3-j}(x_i) > 0$.
We construct the ``flipped coloring'' from the first attempt coloring by removing $x_i$ from $V_j^{(1)}$ and adding it to $V_{3-j}^{(1)}$.
The first attempt coloring or the flipped coloring is a desirable coloring of $G$, which contradicts the choice of $G$.
This proves the claim.

Let $(\widehat{G_1}, \widehat{W_1^{(1)}}, \widehat{W_2^{(1)}})$ be a critical weighted subgraph of $G_1$.
Let $S_1 = V(\widehat{G_1})$.
By the minimality of $G$ we have that $\rho(\widehat{G_1}) \leq -\beta$ and $x_1 \in S_1$.
Therefore there exists a vertex set $S_1$ that contains $x_1$ and $\rho(S_1, W_1, W_2) \leq -\beta + \alpha + \beta = \alpha$.

Let $(G_{2,3}, W_1, W_2)$ be the weighted graph formed from $H$ by adding edge $x_2x_3$.
We claim that $G_{2,3}$ contains a critical subgraph.
By way of contradiction, suppose $G_{2,3}$ has a desirable coloring $V_1^{(2,3)} \cup V_2^{(2,3)}$.
We extend the desirable coloring of $G_{2,3}$ to a ``first attempt'' coloring of $G$ by giving $u$ the opposite color of the one on $x_1$.
The first attempt coloring does not contain a monochromatic cycle, as any such cycle containing $u$ would be a monochromatic cycle in $G_{2,3}$ using edge $x_2x_3$.
So, if the first attempt coloring is not a desirable coloring, then there exists an $x_i \in V_j^{(2,3)}$ such that $|N^m[x_i] \cap V_j^{(2,3)}| > c_j(x_i)$.
Because it is a desirable coloring of $G_{2,3}$, $x_i$ must be in strictly more monochromatic edges in $G$: so $ux_i$ is a monochromatic edge and $x_2x_3$ is not a monochromatic edge.
This implies that $ux_{5-i}$ is not a monochromatic edge, and therefore $i$ is unique.
We create the ``flipped coloring'' from the first attempt coloring by removing $x_i$ from $V_j^{(2,3)}$ and adding it to $V_{3-j}^{(2,3)}$.
The first attempt coloring or the flipped coloring is a desirable coloring of $G$, which contradicts the choice of $G$.
This proves the claim.

\begin{sloppypar}
Let $(\widehat{G_{2,3}}, \widehat{W_1^{(2,3)}}, \widehat{W_2^{(2,3)}})$ be a critical weighted subgraph of $G_{2,3}$.
Let $S_{2,3} = V(\widehat{G_{2,3}})$.
By the minimality of $G$ we have that $\rho(\widehat{G_{2,3}}) \leq -\beta$ and $\{x_2,x_3\} \subseteq S_{2,3}$.
Therefore there exists a vertex set $S_{2,3}$ that contains $\{x_2, x_3\}$ and $\rho(S_{2,3}, W_1, W_2) \leq -\beta + 1$.
\end{sloppypar}

Since $u$ is not in $S_1$ or $S_{2,3}$, we apply subadditivity to say that $\rho(S_1 \cup S_{2,3}) \leq \alpha + (1-\beta)$.
Therefore we have 
$$\rho(G[S_1 \cup S_{2,3}] + u, W_1, W_2) \leq (\alpha + 1 - \beta) + (2-\alpha) - 3 = -\beta,$$ 
which contradicts Corollary \ref{at least -beta}.
\end{proof}

\section{Discharging} \label{discharging section}

To each vertex $u$ we assign a \emph{charge}, $ch(u) = \rho(\{u\}) - d(u)/2$.
By construction we have $\rho(G) = \sum_u ch(u)$.
Our goal is to show $\sum_u ch(u) \leq -\beta$, which will prove the theorem by contradicting the choice of $G$.

Recall $\alpha = \frac{d_2 + 2}{(d_1 + 2)(d_2 + 1)}$, $\beta = \frac{1}{d_2 + 1}$, and $d_2 \geq 2d_1 + 2$.
Therefore $\frac{1}{d_1 + 2} < \alpha \leq \frac{2}{2d_1+3}$ and $\beta \leq \frac{1}{2d_1+3}$.
So we have 
\begin{itemize}
	\item For $d_1 = 0$ we have $\alpha = 0.5 + \beta/2$ and $\beta \leq 1/3$. 
	\item For $d_1 = 1$ we have $\alpha \leq \frac{2}{5}$ and $\beta \leq \frac{1}{5}$.
	\item For $d_1 = 2$ we have $\alpha \leq \frac{2}{7}$ and $\beta \leq \frac{1}{7}$.
	\item For $d_1 = 3$ we have $\alpha \leq \frac{2}{9}$ and $\beta \leq \frac{1}{9}$.
	\item For $d_1 \geq 4$ we have $\alpha \leq \frac{2}{11}$  and $\beta \leq \frac{1}{11}$.
\end{itemize}
We define functions $D, S$ over the whole numbers as follows:
\begin{itemize}
	\item $D(0) = S(0) = 0$.
	\item $D(1) = D(2) = 0.5-\alpha$ and $S(1) = S(2) = 0$.
	\item $D(3) = 0.5 - \alpha - \beta/5$ and $S(3) = \beta/5$.
	\item $D(d) = 0.5 - \alpha - \beta$ and $S(d) = \beta$ for $d \geq 4$.
\end{itemize}
By construction, for each $k$ we have $D(k) + S(k) \geq 0.5 - \alpha$ and $D(k) \geq S(k)$.
Moreover, $D(1) \geq S(1) + \beta/2$ and for $k \geq 2$ we have $D(k) \geq S(k) + \beta$.

We create a ``final charge'' function $ch^*$. 
The function $ch^*$ is the outcome of modifying $ch$ according to two discharging rules.
In a discharging rule, the act of ``vertex $u$ gives charge $x$ to neighbor $w$'' describes increasing $ch(u)$ by $x$ and decreasing $ch(w)$ by $x$.
In this way we have the property that $\sum_{u \in V(G)}ch(u) = \sum_{w \in V(G)} ch^*(w)$.
The discharging rules are:
\begin{itemize}
	\item[R1] Each vertex that is doubly-constrained gives $D(d_1)$ charge to each neighbor that is three-two-two and not doubly-constrained.
	\item[R2] Each vertex that is somehow-constrained but not doubly-constrained gives $S(d_1)$ charge to each neighbor that is triple-three.
\end{itemize}
As $0 = D(0) = S(0) = S(1) = S(2)$, we effectively only apply R1 when $d_1 \geq 1$ and apply R2 when $d_1 \geq 3$.

Our goal is to show that for every vertex $u$ we have $ch^*(u) \leq 0$.
Moreover, there exists two vertices $y_1, y_2$ such that $ch^*(y_j) \leq -\beta/2$ for each $j$, or there exists a vertex $y$ such that $ch^*(y) \leq -\beta$
This will prove $\rho(G) = \sum_{w \in V(G)} ch^*(w) \leq -\beta$, which contradicts the choice of $G$ and proves the theorem.

\begin{lemma}\label{get d1=0 out of the way first}
If $d_1 = 0$, then for any vertex $u$ we have $ch(u) \leq - \beta/2$.
\end{lemma}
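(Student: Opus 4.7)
The plan is to observe that, since $d_1 = 0$, the capacity $c_1(u) = d_1 + 1 - W_1(u)$ lies in $\{0,1\}$, and to case on its value. Throughout I will use $d(u) \geq 2$ from Lemma \ref{2 neighbors}, the standing assumption that capacities are nonnegative, and the identity $\alpha = 1/2 + \beta/2$ that drops out of $d_1 = 0$.

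If $c_1(u) = 0$, then Fact \ref{basic arithmetic with potential}.2.c gives $\rho(\{u\}) \leq 1-\beta$; combined with $d(u) \geq 2$ this yields
$$ch(u) = \rho(\{u\}) - d(u)/2 \leq 1 - \beta - 1 = -\beta \leq -\beta/2.$$

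If $c_1(u) = 1$ and $d(u) \geq 3$, Fact \ref{basic arithmetic with potential}.2.e gives $\rho(\{u\}) \leq 2 - \alpha = 3/2 - \beta/2$, so
$$ch(u) \leq 3/2 - \beta/2 - 3/2 = -\beta/2,$$
tight precisely when $d(u) = 3$ (and strictly smaller when $d(u) \geq 4$).

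The remaining subcase is $c_1(u) = 1$ and $d(u) = 2$. Lemma \ref{degree 2} forces $u$ to be doubly-constrained or $i$-null for some $i$. Since $c_1(u) = 1$, $u$ is not $1$-null; and if $u$ were $2$-null, then $c_1(u) + c_2(u) = 1$, contradicting Corollary \ref{capacity sums to at least 2}. Hence $u$ is doubly-constrained with $c_1(u) = c_2(u) = 1$, so $\rho(\{u\}) = \alpha$ and $ch(u) = \alpha - 1 = -1/2 + \beta/2 \leq -\beta/2$ because $\beta \leq 1/3$. The only mildly delicate step is this last subcase, where Lemma \ref{degree 2} together with Corollary \ref{capacity sums to at least 2} is what prevents $c_2(u)$ from being inflated; elsewhere the bound is immediate arithmetic from the facts already catalogued.
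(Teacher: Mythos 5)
Your proof is correct and follows essentially the same route as the paper: $d(u)\geq 2$ from Lemma \ref{2 neighbors}, the universal bound $\rho(\{u\})\leq 2-\alpha$ for $d(u)\geq 3$, and Lemma \ref{degree 2} for degree-$2$ vertices. The only cosmetic difference is that you rule out the $c_1(u)=1$, $c_2(u)=0$ possibility via Corollary \ref{capacity sums to at least 2}, whereas the paper simply bounds that case directly with Fact \ref{basic arithmetic with potential}.2.d; both are valid.
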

\begin{proof}
Let $u$ be a vertex.
By Lemma \ref{2 neighbors}, $d(u) \geq 2$.
If $d(u) \geq 3$, then $ch(u) \leq 2 - \alpha - 3/2 = - \beta/2$.
By Lemma \ref{degree 2}, if $d(u) = 2$, then one of the following three cases applies.
\begin{itemize}
	\item $c_1(u) = c_2(u) = 1$, which implies $ch(u) = \alpha-1 = (\beta-1)/2$. Because $\beta \leq 1/3$, this implies $ch(u) \leq -\beta$.
	\item $c_1(u) = 0$, which by Fact \ref{basic arithmetic with potential}.2.c implies $ch(u) \leq -\beta$.
	\item $c_2(u) = 0$, which by Fact \ref{basic arithmetic with potential}.2.d implies $ch(u) \leq -\beta$.
\end{itemize}
\end{proof}

\begin{lemma}
If $u$ is doubly constrained, then 
\begin{itemize}
	\item if $d_1 = 1$, then $ch(u) \leq -d(u)(0.5 - \alpha) - \beta$,
	\item if $d_1 = 2$, then $ch(u) \leq -d(u)(0.5 - \alpha)$,
	\item if $d_1 = 3$ and $d(u) \neq 5$, then $ch(u) \leq -d(u)(0.5 - \alpha)$,
	\item if $d_1 = 3$ and $d(u) = 5$, then $ch(u) \leq -d(u)(0.5 - \alpha) + \beta$, and
	\item if $d_1 \geq 4$, then $ch(u) \leq -d(u)(0.5 - \alpha - \beta) - \beta$.
\end{itemize}
\end{lemma}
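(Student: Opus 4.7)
The plan is to derive an upper bound on $\rho(\{u\})$ directly from the doubly-constrained hypothesis and then verify each of the five numerical claims by elementary arithmetic. Since $u$ is doubly-constrained, by definition $1 \le c_i(u) \le d(u) - 1$ for each $i \in \{1,2\}$; in addition $c_1(u) = d_1 + 1 - W_1(u) \le d_1 + 1$ by construction of capacity. Combining these yields
\[
\rho(\{u\}) \;=\; \alpha\, c_1(u) + \beta(c_2(u) - 1) \;\le\; \alpha\,\min(d(u)-1,\, d_1+1) + \beta(d(u)-2),
\]
so the master bound is
\[
ch(u) \;\le\; \alpha\,\min(d(u)-1,\, d_1+1) + \beta(d(u)-2) - d(u)/2.
\]

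Next I would split on which side of $d(u) = d_1 + 2$ one is on. When $d(u) \le d_1 + 2$, the minimum is $d(u)-1$ and the bound simplifies to $d(u)(\alpha+\beta-\tfrac12) - \alpha - 2\beta$; when $d(u) \ge d_1 + 3$, the minimum is $d_1 + 1$ and the bound becomes $d(u)(\beta - \tfrac12) + (d_1+1)\alpha - 2\beta$. Each of the five claimed inequalities is then a linear inequality in $d(u)$, and the plan is to verify them by rearranging and applying the inequality $\alpha \ge 2\beta$ from Fact \ref{basic arithmetic with potential}.6, supplemented in the small-$d_1$ cases by the explicit estimates on $\alpha$ and $\beta$ listed at the start of Section \ref{discharging section}.

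The only genuinely delicate case is the $d_1 = 3$, $d(u) = 5$ exception. There both caps $c_1(u) \le d_1+1 = 4$ and $c_2(u) \le d(u)-1 = 4$ are simultaneously active, yielding $ch(u) \le 4\alpha + 3\beta - \tfrac{5}{2}$. The naive target $-d(u)(\tfrac12 - \alpha) = -\tfrac{5}{2} + 5\alpha$ would require $3\beta \le \alpha$, which is strictly stronger than Fact \ref{basic arithmetic with potential}.6 provides; this is exactly why the weaker bound $-d(u)(\tfrac12 - \alpha) + \beta$ appears in the statement, and with that $+\beta$ correction the required inequality reduces to $2\beta \le \alpha$, which does hold. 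In every other branch, including $d_1 = 3$ with $d(u) = 4$ or $d(u) = 6$ and every $d(u) \ge d_1 + 3$ for any $d_1$, the same rearrangement closes with margin via $\alpha \ge 2\beta$, so the remaining work is just routine bookkeeping across the five bullets.
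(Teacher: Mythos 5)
Your proposal is correct and follows essentially the same route as the paper: the same master bound $\rho(\{u\}) \leq \alpha\min\{d_1+1,\,d(u)-1\} + \beta(d(u)-2)$ from the doubly-constrained hypothesis, closed with $\alpha \geq 2\beta$, including the same diagnosis of the exceptional case $d_1=3$, $d(u)=5$. The only difference is cosmetic: the paper compresses your two-regime case split into the single identity $2\min\{d_1+1, d(u)-1\}-d(u)-2 = d_1 - 2 - |d_1 + 2 - d(u)|$, whereas you verify the resulting linear inequalities in $d(u)$ branch by branch.
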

\begin{proof}
Because $u$ is doubly constrained, we have that $c_1(u),c_2(u) \leq d(u) -  1$.
Therefore 
$$\rho(\{u\}) = \alpha c_1(x)  +  \beta (c_2(x)-1) \leq (d(u)-1)(\alpha + \beta) - \beta, $$
and so 
$$ ch(u) \leq -d(u)(0.5 - \alpha - \beta) - 2\beta - \alpha. $$
This proves the lemma when $d_1 \geq 4$.

Recall that $c_1(u) \leq d_1 + 1$.
So we can get a refined estimate as 
$$ \rho(\{u\}) \leq \min\{d_1+1, d(u)-1\}\alpha + (d(u)-2)\beta. $$
This implies that
$$ ch(u) \leq -d(u)(0.5 - \alpha) + (\min\{d_1+1, d(u)-1\}-d(u))\alpha + (d(u)-2)\beta. $$
As $\alpha \geq 2\beta$ we can simplify this inequality to 
$$ ch(u) \leq -d(u)(0.5 - \alpha) + (2\min\{d_1+1, d(u)-1\}-d(u)-2)\beta. $$

Observe that $2\min\{d_1+1, d(u)-1\}-d(u)-2 = d_1 - 2 - |d_1 + 2 - d(u)|$. 
Plugging that value in gives us 
$$ ch(u) \leq -d(u)(0.5 - \alpha) + (d_1 - 2)\beta, $$
with an additional $\beta$ subtracted from the right hand side when $d_1 = 3$ and $d(u) \neq 5$.
This proves the lemma.
\end{proof}

\begin{cor}\label{doubly constrained charge}
If $u$ is doubly constrained, then $ch^*(u) \leq 0$.
Moreover, if $d_1 = 1$, then $ch^*(u) \leq -\beta$.
\end{cor}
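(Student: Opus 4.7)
The plan is to first observe that the only discharging action that can change $ch(u)$ when $u$ is doubly-constrained is rule R1, and then only with $u$ acting as the sender. Rule R2 requires the sender to be somehow-constrained but \emph{not} doubly-constrained, so $u$ cannot send under R2; its receivers must be triple-three, which (being degree $3$ with $c_i \geq 3$) have $c_i \geq d$ for both colors and so cannot be doubly-constrained, ruling out $u$ as an R2 receiver. Rule R1 explicitly restricts its receivers to three-two-twos that are \emph{not} doubly-constrained, so $u$ cannot receive under R1 either. Consequently, each neighbor of $u$ that is a three-two-two and not doubly-constrained contributes $D(d_1)$ to $ch^*(u)-ch(u)$ (per the stated sign convention), yielding
\[
ch^*(u) \;\leq\; ch(u) + d(u)\,D(d_1).
\]

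Next I would plug in the preceding lemma's bounds on $ch(u)$ together with the definitions of $D(d_1)$, case by case on $d_1$. For $d_1=0$, rule R1 is vacuous since $D(0)=0$, and Lemma \ref{get d1=0 out of the way first} already gives $ch^*(u)=ch(u)\leq -\beta/2 \leq 0$. For $d_1\in\{1,2\}$ the value $D(d_1)=0.5-\alpha$ exactly cancels the $-d(u)(0.5-\alpha)$ term from the lemma, yielding $ch^*(u)\leq -\beta$ when $d_1=1$ and $ch^*(u)\leq 0$ when $d_1=2$. For $d_1=3$ the slightly smaller $D(3)=0.5-\alpha-\beta/5$ gives a net surplus of $-d(u)\beta/5 \leq 0$ whenever $d(u)\neq 5$, and exactly absorbs the additional $+\beta$ appearing in the lemma's bound for $d(u)=5$. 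For $d_1\geq 4$, $D(d_1)=0.5-\alpha-\beta$ cancels the $-d(u)(0.5-\alpha-\beta)$ term, leaving $ch^*(u)\leq -\beta$.

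The main obstacle is really just bookkeeping: carefully verifying that no discharging action other than R1 with $u$ as sender can affect $ch(u)$, so that the inequality $ch^*(u)\leq ch(u)+d(u)D(d_1)$ is a genuine upper bound. Once that observation is in place, the values $D(d_1)$ are evidently calibrated precisely so that each case of the previous lemma collapses to one of the required bounds, with no delicate combinatorics required. The strengthening to $-\beta$ when $d_1=1$ is not an extra argument but simply a byproduct of the extra $-\beta$ slack present in the $d_1=1$ branch of that lemma.
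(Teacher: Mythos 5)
Your proof is correct and follows exactly the route the paper intends (the corollary is left as an immediate consequence of the discharging rules and the preceding lemma): a doubly-constrained vertex can only act as an R1 sender, so $ch^*(u)\leq ch(u)+d(u)D(d_1)$, and the values of $D(d_1)$ are calibrated to cancel the lemma's bounds case by case, including the $d(u)=5$ exception at $d_1=3$. The sign convention and the observation that triple-three and R1-receiver vertices cannot be doubly-constrained are handled correctly, so no gap remains.
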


\begin{lemma}\label{potential of somehow constrained}
If $u$ is somehow constrained and not three-two-two, then:
\begin{itemize}
	\item If $d_1 \in \{1,2\}$, then $ch(u) \leq -\beta$.
	\item If $d_1 = 3$, then $ch(u) \leq -d(u)\beta/5 - \beta$.
	\item If $d_1 \geq 4$, then $ch(u) \leq -(d(u)+1)\beta$.
\end{itemize}
\end{lemma}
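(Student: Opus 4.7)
The plan is to reduce to cases based on whether $u$ is doubly-constrained.

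If $u$ is doubly-constrained (and not three-two-two), the immediately preceding lemma already gives a bound on $ch(u)$ that suffices. For instance, when $d_1 \geq 4$ the bound $ch(u) \leq -d(u)(0.5 - \alpha - \beta) - \beta$ combined with $\alpha + 2\beta \leq 4/(2d_1+3) \leq 1/2$ yields $ch(u) \leq -d(u)\beta - \beta = -(d(u)+1)\beta$. The regimes $d_1 \in \{1,2,3\}$ are verified similarly via the explicit numerical bounds on $\alpha$ and $\beta$ listed at the start of this section.

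Otherwise $u$ is $i$-constrained for exactly one $i \in \{1,2\}$, so the other color at $u$ is either null or slack; three sub-cases arise. If $c_i(u) = 0$ for some $i$, then Fact \ref{basic arithmetic with potential}.2.c or 2.d together with Lemma \ref{2 neighbors} gives $ch(u) \leq d(u)(\max(\alpha,\beta) - 1/2) - \beta$, which is well below every target bound since $\max(\alpha,\beta) < 1/2$ for $d_1 \geq 1$.

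If $u$ is $1$-slack and $2$-constrained with both capacities positive, then $d(u) \leq c_1(u) \leq d_1 + 1$ and $c_2(u) \leq d(u) - 1$, so $\alpha c_1(u) + \beta c_2(u) \leq \alpha(d_1+1) + \beta(d(u)-1)$. If $u$ is $1$-constrained and $2$-slack, then $c_1(u) \leq d(u) - 1$ and $c_2(u) \leq d_2+1$, so using the identity $\beta(d_2+1) = 1$ we get $\alpha c_1(u) + \beta c_2(u) \leq \alpha \min(d(u)-1, d_1+1) + 1$. In both sub-cases Lemma \ref{degree 2} forces $d(u) \geq 3$, and when $d(u) = 3$ the \emph{not three-two-two} hypothesis forces $c_2(u) = 1$ or $c_1(u) = 1$ respectively. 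Substituting these bounds into $ch(u) = \alpha c_1(u) + \beta c_2(u) - \beta - d(u)/2$ and checking each regime of $d_1$ finishes the proof.

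The main obstacle is the $2$-slack sub-case, in which $\beta c_2(u)$ may be as large as $1$; the identity $\beta(d_2+1) = 1$, combined with $d(u) \geq 3$ and the forced equality $c_1(u) = 1$ when $d(u) = 3$, leaves exactly enough slack to absorb $\alpha c_1(u) - \beta - d(u)/2$. After simplification the verifications for $d_1 \geq 4$ reduce to inequalities of the form $d(u)(2d_1 - 1) \geq 4d_1 + 2$ and analogous variants, which hold because $d(u) \geq 3$.
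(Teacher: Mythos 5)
Your argument is correct, and all of the case checks do go through (I verified the delicate regimes, e.g.\ $d_1=3$ with $d(u)=5$ in the doubly-constrained case, and $d(u)=4$ with $d_1\geq 4$ in your slack sub-cases), but you organize the proof differently from the paper. The paper splits on $d(u)$: the crude bound $\rho(\{u\})\leq 2-\alpha$ kills every degree at least $5$ in one line, and degrees $2$, $3$, $4$ are then handled with Lemma \ref{degree 2}, the not-three-two-two hypothesis, and only the trivial capacity bounds $c_1(u)\leq d_1+1$, $c_2(u)\leq d_2+1$; doubly-constrained vertices are never separated out. You instead split on the constraint structure: doubly-constrained vertices are discharged by quoting the preceding lemma, and a singly-constrained vertex is analyzed according to whether the other color is null or slack, exploiting the sharper information that slackness gives ($c_i(u)\geq d(u)$, which in your sub-case with color $1$ slack even forces $d(u)\leq d_1+1$), with Lemma \ref{degree 2} pushing the degree up to $3$ and the not-three-two-two hypothesis pinning the constrained capacity to $1$ at degree $3$. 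Your route buys reuse of the previous lemma and tighter structural bounds; the paper's buys brevity at large degree and avoids having to reason carefully about the null/slack/constrained trichotomy.

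Two small imprecisions are worth repairing, though neither is a genuine gap. First, in your null sub-case the stated bound $ch(u)\leq d(u)(\max(\alpha,\beta)-1/2)-\beta$ does not follow from Fact \ref{basic arithmetic with potential}.2.c--d together with Lemma \ref{2 neighbors} (those only give $ch(u)\leq 1-\min(\alpha,\beta)-d(u)/2$, which fails the targets at $d(u)=2$ when $d_1\geq 3$); what you actually need is that the non-null color is constrained, hence not slack, so its capacity is at most $d(u)$ --- note that an $i$-constrained vertex with $c_{3-i}(u)=0$ may have $c_i(u)=d(u)$ rather than $d(u)-1$, and with the bound $c_i(u)\leq d(u)$ your displayed inequality is exactly right. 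Second, the closing claim that the reduced inequalities ``hold because $d(u)\geq 3$'' is loose: at $d(u)=3$ it is the forced equalities $c_1(u)=1$ or $c_2(u)=1$ that carry the estimate, and the generic inequalities (of the shape $d(u)(2d_1-3)\geq 4d_1+2$ rather than $d(u)(2d_1-1)\geq 4d_1+2$ in the worst sub-case) are only needed, and do hold, for $d(u)\geq 4$.
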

\begin{proof}
Recall that every vertex has potential at most $2-\alpha$, so
$$ch(u) \leq (2-\alpha)-d(u)/2 \leq -(d(u)-4)/2 - \alpha.$$
Observe that at $d(u) = 5$ with $\alpha \geq 2\beta$ this becomes $-1/2-2\beta$, which is low enough to prove the lemma.
Since the slope---in respect to the degree of $u$---of the right hand side is $-0.5$, which is far smaller than $-\beta$, this proves the lemma for all vertices with degree at least $5$.

If $d(u) = 2$, then by Lemma \ref{degree 2} one of the following three cases applies.
\begin{itemize}
	\item $c_1(u) = c_2(u) = 1$, which implies $ch(u) = \alpha-1$.
	\item $c_1(u) = 0$ and $c_2(u) = 2$, which implies $ch(u) = \beta - 1$.
	\item $c_2(u) = 0$ and $c_1(u) = 2$, which implies $ch(u) = 2\alpha - \beta - 1$.
\end{itemize}
Therefore $ch(u) \leq 2\alpha-1 - \beta$.
Observe that 
\begin{itemize}
	\item For $d_1 \in \{1,2\}$ we have $\alpha \leq \frac{2}{5}$, so $2\alpha-1 < 0$.
	\item For $d_1 = 3$ we have $\alpha \leq \frac{2}{9}$ and $\beta \leq \frac{1}{9}$, so $2\alpha-1 \leq -2\beta/5$.
	\item For $d_1 \geq 4$ we have $\alpha \leq \frac{2}{11}$  and $\beta \leq \frac{1}{11}$, so $2\alpha-1 \leq -2\beta$.
\end{itemize}
Thus the lemma is proven when $d(u) = 2$.

If $d(u) = 3$ and $u$ is not three-two-two, then $c_i(u) \leq 1$ for some $i$.
If $i=1$, then $\rho(\{u\}) \leq 1 + \alpha - \beta$, and if $i=2$, then $\rho(\{u\}) \leq 1 - \alpha + \beta$.
Therefore $ch(u) \leq -0.5 + \alpha - \beta.$
\begin{itemize}
	\item If $d_1 \in \{1, 2\}$, then the lemma immediately follows from $\alpha \leq 0.5$.
	\item If $d_1 = 3$, then $ch(u) \leq  -8\beta/5$.
	\item If $d_1 \geq 4$, then $ch(u) \leq \frac{-7}{22} - \beta < -4\beta$.
\end{itemize}
Thus the lemma is proven when $d(u) = 3$.

We now consider the final case where $d(u) = 4$.
As $ch(u) \leq -(d(u)-4)/2 - \alpha = -\alpha \leq -2\beta$, the lemma is proven for $d_1 \leq 3$.
So suppose $d_1 \geq 4$.
As $u$ is somehow constrained, we have $c_i(u) \leq 3$ for some $i$.
If $i=1$, then $\rho(\{u\}) \leq 1 + 3\alpha - \beta$, and if $i=2$, then $\rho(\{u\}) \leq 1 - \alpha + 3\beta$.
Therefore $ch(u) \leq 3\alpha - \beta - 1$.
As $\alpha \leq \frac{2}{11}$ and $\beta \leq \frac{1}{11}$, we have that $ch(u) < -5\beta$.
\end{proof}

\begin{cor}\label{singly constrained charge}
If $d_1 \geq 1$ and $u$ is somehow-constrained, not three-two-two, and not doubly constrained, then $ch^*(u) \leq -\beta$.
\end{cor}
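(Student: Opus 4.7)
The plan is to observe that the hypotheses on $u$ make it participate in essentially only one discharging rule. Since $u$ is not doubly-constrained and not three-two-two, rule R1 does not involve $u$ at all, either as source or recipient. For rule R2, $u$ is somehow-constrained but not doubly-constrained, so $u$ qualifies as a source and transfers $S(d_1)$ to each triple-three neighbor (which, under the convention stated in the paper, \emph{increases} $ch(u)$). The key subsidiary observation is that $u$ cannot itself be a triple-three vertex: a triple-three has $d(u)=3$ with $c_1(u),c_2(u)\geq 3$, hence $c_i(u)=d(u)$ and $c_{3-i}(u)\geq 1$ for each $i$, making it both $1$-slack and $2$-slack and thus not somehow-constrained. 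So R2 only touches $u$ as a source.

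Writing $k$ for the number of triple-three neighbors of $u$, and noting $k \leq d(u)$, the previous paragraph yields $ch^*(u) = ch(u) + k\cdot S(d_1)$. I would finish by case analysis on $d_1$, in each case combining this with Lemma \ref{potential of somehow constrained}. When $d_1 \in \{1,2\}$ we have $S(d_1)=0$, and the lemma already gives $ch(u) \leq -\beta$. When $d_1=3$, $S(3)=\beta/5$ and the lemma provides a $d(u)\beta/5$ slack that exactly absorbs the possible increase of $k\cdot \beta/5 \leq d(u)\beta/5$. When $d_1 \geq 4$, $S(d_1)=\beta$ and the lemma's bound $ch(u) \leq -(d(u)+1)\beta$ absorbs the $k\beta \leq d(u)\beta$ increase in the same way.

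I do not expect any genuine obstacle: the definitions of $D$ and $S$, and the inequalities in Lemma \ref{potential of somehow constrained}, are calibrated precisely so that this accounting closes with exactly $-\beta$ of surplus. The only point requiring care is the bookkeeping that identifies every rule touching $u$; once the ruling out of $u$ as a triple-three and of R1 acting on $u$ is made explicit, the three-case arithmetic is routine.
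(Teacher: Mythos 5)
Your proof is correct and is exactly the argument the paper intends: the corollary is stated without proof as an immediate consequence of Lemma \ref{potential of somehow constrained} together with the discharging rules, and your bookkeeping (no interaction with R1, $u$ acts only as a source in R2 since it cannot be triple-three, and the bounds $-\beta$, $-d(u)\beta/5-\beta$, $-(d(u)+1)\beta$ absorb the at most $d(u)\,S(d_1)$ increase) is precisely that calculation. The only cosmetic remark is that ruling out $u$ being triple-three follows even more directly from triple-three implying three-two-two, though your slackness argument is also valid.
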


\begin{lemma}\label{charge for everything else}
If $u$ is not three-two-two or somehow-constrained, then $ch^*(u) \leq -\beta$.
\end{lemma}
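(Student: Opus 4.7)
The plan is to show first that $u$ receives no charge under either discharging rule, so that $ch^*(u) = ch(u)$, and then to bound $ch(u) \leq -\beta$ by a case analysis on $d(u)$. The non-receipt claim is immediate: R1 transfers charge only to three-two-two vertices, which $u$ is not by hypothesis, and R2 transfers only to triple-three vertices; but any triple-three vertex satisfies $c_1,c_2 \geq 3 \geq 2$, hence is three-two-two, and so cannot be $u$.

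Next I would unpack the assumption that $u$ is not somehow-constrained as: for each color $i$, the vertex $u$ is $i$-null or $i$-slack. By Corollary \ref{capacity sums to at least 2} not both colors are null. The case $d(u) \geq 4$ falls quickly to the universal bound $\rho(\{u\}) \leq 2 - \alpha$ of Fact \ref{basic arithmetic with potential}.2.e together with $\alpha \geq 2\beta$ from Fact \ref{basic arithmetic with potential}.6, giving $ch(u) \leq -\alpha \leq -2\beta$. By Lemma \ref{2 neighbors} only $d(u) \in \{2,3\}$ remains.

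In the low-degree cases I would use the reducible-configuration results to show that some $c_i(u) = 0$. If $d(u) = 2$, Lemma \ref{degree 2} allows only doubly-constrained (excluded, since that is somehow-constrained) or some $c_i(u) = 0$. If $d(u) = 3$, the failure of three-two-two gives some $c_i(u) \leq 1$, and the value $1$ is neither null nor slack since slackness at $d(u)=3$ requires $c_i \geq 3$. Once $c_i(u) = 0$, the slack condition on $c_{3-i}(u)$ forces $c_{3-i}(u) \geq d(u) + 1$, because the alternative branch of the slack definition demands $c_i \geq 1$. A direct computation then gives $\rho(\{u\}) \leq 1 - \beta$ when $c_1(u) = 0$ (using $c_2(u) - 1 \leq d_2$, so $\beta(c_2(u)-1) \leq 1 - \beta$) and $\rho(\{u\}) \leq 1 - \alpha$ when $c_2(u) = 0$ (using $\alpha(d_1+2) = 1 + \beta$). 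In either sub-case $ch(u) \leq -\beta$ for both $d(u) = 2$ and $d(u) = 3$.

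The only real obstacle is the bookkeeping that turns ``not somehow-constrained'' together with ``not three-two-two'' into the clean structural statement ``some capacity is zero and the other is at least $d(u)+1$'' for the low-degree cases; once that is in hand, the arithmetic is routine and uses only facts already assembled in Section \ref{preliminaries section}.
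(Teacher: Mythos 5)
Your proof is correct and takes essentially the same route as the paper: show $u$ is untouched by the discharging rules, deduce that $u$ is $i$-null when $d(u)\in\{2,3\}$ (via Lemma \ref{degree 2} and the definitions of three-two-two and constrained), bound $\rho(\{u\})$ by $1-\beta$ or $1-\alpha$ as in Fact \ref{basic arithmetic with potential}.2, and use $\rho(\{u\})\leq 2-\alpha$ for $d(u)\geq 4$. One small point: to conclude $ch^*(u)=ch(u)$ you must also note that $u$ \emph{gives} no charge --- under the paper's stated convention giving charge increases the giver's final charge, so non-receipt alone does not suffice --- but this is immediate since $u$ is not somehow-constrained and hence is not a giver in R1 or R2; likewise your deduction that $c_{3-i}(u)\geq d(u)+1$ is harmless but unnecessary, as the capacity bounds $c_1\leq d_1+1$, $c_2\leq d_2+1$ already give the stated potential estimates.
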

\begin{proof}
If $u$ is not three-two-two or somehow-constrained, then $u$ is not involved in either discharging rule and so $ch(u) = ch^*(u)$.

If $u$ is not three-two-two, not somehow-constrained, and $d(u) = 3$, then $u$ is $i$-null for some $i$.
If $d(u) = 2$ and not somehow-constrained, then by Lemma \ref{degree 2} $u$ is $i$-null for some $i$.
If $u$ is $i$-null and $d(u) \geq 2$, then by Fact \ref{basic arithmetic with potential}.2 we have $ch(u) \leq -\beta$.
So assume $d(u) \geq 4$, which implies $ch(u) \leq (2-\alpha)-2 \leq -\beta$.
\end{proof}

\begin{lemma}\label{discharge three-two-two}
If $d_1 \geq 1$, $u$ is three-two-two, not triple-three, and not doubly-constrained, then $ch^*(u) \leq 0$.
Moreover, if $d_1 \geq 2$, then $ch^*(u) \leq -\beta$.
\end{lemma}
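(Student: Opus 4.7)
My plan is to deduce the structural form of $u$ from the hypotheses, bound $ch(u)$ directly in two cases, and then account for the charge exchange dictated by R1 and R2. Since $u$ is three-two-two but not triple-three and not doubly-constrained, exactly one of $c_1(u), c_2(u)$ equals $2$ while the other is at least $3 = d(u)$. This splits the argument into two subcases (up to symmetry): (A) $c_1(u) = 2$ with $c_2(u) \geq 3$, or (B) $c_2(u) = 2$ with $c_1(u) \geq 3$, where only the second case requires $d_1 \geq 2$. In each subcase I apply the trivial bounds $c_j(u) \leq d_j + 1$ together with the identity $\beta d_2 = 1 - \beta$ to obtain a closed-form upper bound on $ch(u) = \rho(\{u\}) - 3/2$ purely in terms of $\alpha, \beta, d_1$: roughly $2\alpha - \beta - \tfrac12$ in Case A and $(d_1+1)\alpha + \beta - \tfrac32$ in Case B.

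For the discharging accounting I observe that $u$ is somehow-constrained but not doubly-constrained, so by R2 the vertex $u$ donates $S(d_1)$ to each triple-three neighbor. By Lemma \ref{triple-three next to a double constrained} (this is where $d_1 \geq 1$ is used), at least one neighbor of $u$ is doubly-constrained, and R1 sends $D(d_1)$ from each such neighbor to $u$. Crucially, no vertex is simultaneously doubly-constrained (requiring $c_i < d(v)$ for both $i$) and triple-three (requiring $c_i \geq d(v) = 3$ for both $i$), so if $t$ counts $u$'s doubly-constrained neighbors and $k$ counts its triple-three neighbors, then $t \geq 1$ and $t + k \leq d(u) = 3$. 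The extremal case $t = 1$, $k = 2$ yields
$$ ch^*(u) \leq ch(u) - D(d_1) + 2\, S(d_1). $$

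The proof is then completed by routine arithmetic in the four regimes $d_1 = 1, 2, 3, \geq 4$, substituting the explicit values of $D, S$ and exploiting the identity $(d_1+2)\alpha = 1 + \beta$. The main obstacle I foresee is purely bookkeeping: the inequality is tight in the $d_1 = 1$ regime, where only Case A arises and the bound collapses to exactly $ch^*(u) \leq 0$ via the identity $3\alpha - \beta = 1$, consistent with the weaker conclusion claimed there; whereas for $d_1 \geq 2$ enough slack remains in both cases to reach $-\beta$, and the $d_1 \geq 4$ regime must be verified separately because $D$ and $S$ change form.
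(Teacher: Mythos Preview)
Your proposal is correct and follows essentially the same structure as the paper's proof: both establish that $u$ is somehow-constrained, invoke Lemma~\ref{triple-three next to a double constrained} to obtain a doubly-constrained neighbor (which cannot be triple-three), derive the worst-case bound $ch^*(u) \leq ch(u) - D(d_1) + 2S(d_1)$, and finish with arithmetic in the regimes $d_1 = 1, 2, 3, \geq 4$. The only difference is bookkeeping in bounding $\rho(\{u\})$: the paper merges your Cases~A and~B into the single estimate $\rho(\{u\}) \leq (2-\alpha) - 2(d_1-1)\beta$ via $d_2 \geq 2d_1+2$, whereas you keep the two cases separate and simplify through the identity $(d_1+2)\alpha = 1+\beta$; both routes yield the required inequalities.
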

\begin{proof}
If $u$ is three-two-two but not triple-three, then $d(u) = 3$ and there exists an $i$ such that $c_i(u) = 2$.
This implies that $u$ is somehow-constrained.
By Lemma \ref{triple-three next to a double constrained}, $u$ has a neighbor that is doubly-constrained (which implies that it is not triple-three).
So while applying the discharging rules, at least one neighbor will give $D(d_1)$ charge to $u$ while $u$ will give at most $2S(d_1)$ charge to its neighbors, and therefore
$$ ch^*(u) \leq \rho(\{u\}) - 3/2 - D(d_1) + 2S(d_1) . $$

If $d_1 \in \{1,2\}$, then $D(d_1) = 0.5-\alpha$ and $S(d_1) = 0$, so $ch^*(u) \leq (2-\alpha)-3/2-(0.5-\alpha)=0$.
This proves the lemma if $d_1 = 1$, so assume $d_1 \geq 2$.

Because $c_i(u) = 2$ we have that $W_i(u) = d_i - 1$, which is positive.
So we have that $\rho(\{u\}) \leq (2-\alpha) - \min\{\alpha(d_1-1), \beta(d_2 - 1)\}$.
As $\alpha \geq 2\beta$ and $d_2 \geq 2d_1 + 2$, this bound becomes $\rho(\{u\}) \leq (2-\alpha) - 2(d_1-1)\beta$.
Plugging this into the above inequality creates 
$$ ch^*(u) \leq 1/2 -\alpha - 2(d_1-1)\beta - D(d_1) + 2S(d_1) . $$
When $d_1 = 2$ this simplifies to $ch^*(u) \leq -2\beta$, so assume $d_1 \geq 3$. 

Recall that $D(3) = 0.5 - \alpha - \beta/5$ and $S(3) = \beta/5$.
So when $d_1 = 3$ we have that 
\begin{eqnarray*}
	ch^*(u)	
		& 	\leq	& 0.5 - \alpha - 4\beta - (0.5 - \alpha - \beta/5) + 2\beta/5 \\ 
		& 	= 	& -3.4\beta .
\end{eqnarray*}

Recall that $D(d_1) = 0.5 - \alpha - \beta$ and $S(d_1) = \beta$ for $d_1 \geq 4$.
So when $d_1 \geq 4$ we have that 
\begin{eqnarray*}
	ch^*(u)	
		&	\leq	& 0.5 - \alpha - 6\beta - (0.5 - \alpha - \beta) + 2\beta \\ 
		& 	= 	& -3\beta .
\end{eqnarray*}
\end{proof}

\begin{lemma}\label{discharge triple three}
If $d_1 > 0$ and $u$ is triple-three, then $ch^*(u) \leq 0$.
Moreover, if $u$ is adjacent to two doubly-constrained vertices and $d_1 \geq 2$, then $ch^*(u) \leq -\beta$.
\end{lemma}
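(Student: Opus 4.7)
The plan is to mimic the bookkeeping template used in Lemma \ref{discharge three-two-two}: bound the initial charge of $u$, identify what $u$ loses under R1 and R2, and close with a short case analysis in $\alpha,\beta$. Two observations set the frame. Because a triple-three vertex requires $c_1(u)\ge 3$ while $c_1(u)\le d_1+1$, such a $u$ can only exist when $d_1\ge 2$, so the case $d_1=1$ of the statement is vacuous. Because $c_i(u)\ge 3 = d(u)$ for each $i$, a triple-three $u$ is $i$-slack for both colors, so it is neither doubly-constrained nor somehow-constrained; hence $u$ is never a giver under R1 or R2, only a recipient.

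Let $k_1$ count the doubly-constrained neighbors of $u$ (each costing $u$ charge $D(d_1)$ under R1) and let $k_2$ count the neighbors that are somehow-constrained but not doubly-constrained (each costing $S(d_1)$ under R2). Lemma \ref{triple-three next to a double constrained} supplies $k_1\ge 1$, and Lemma \ref{triple three next to two somehow-constrained} supplies $k_1+k_2\ge 2$; these are the only reducible-configuration inputs required. Starting from $ch(u)\le (2-\alpha)-3/2 = 1/2-\alpha$ via Fact \ref{basic arithmetic with potential}.2.e, the accounting becomes
\[
ch^*(u)\ \le\ \tfrac{1}{2}-\alpha\ -\ k_1\,D(d_1)\ -\ k_2\,S(d_1),
\]
and the remaining proof is a case split on $d_1\in\{2,3,\ge 4\}$ using the explicit values of $D(d_1),S(d_1)$ listed at the start of Section \ref{discharging section}.

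The cases $d_1=2$ and $d_1\ge 4$ are routine. For $d_1=2$ one has $D(2)=1/2-\alpha$, $S(2)=0$, and the bound collapses to $(1/2-\alpha)(1-k_1)$, which is at most $0$ using $k_1\ge 1$ and at most $-\beta$ when $k_1\ge 2$ via $\alpha\le 2/7$, $\beta\le 1/7$. For $d_1\ge 4$ the values $D=1/2-\alpha-\beta$ and $S=\beta$ are tuned so that the worst case $k_1=k_2=1$ already yields $ch^*(u)\le 0$, and $k_1\ge 2$ upgrades this to $-\beta$ via $\alpha\le 2/11$, $\beta\le 1/11$. The main obstacle is the tight regime $d_1=3$: here $D(3)=1/2-\alpha-\beta/5$ leaves only $\beta/5$ of slack per doubly-constrained neighbor compared with the $d_1=2$ template, and $S(3)=\beta/5$ matches it exactly. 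In this regime both reducibility lemmas must be used simultaneously. When $k_1=1$, the somehow-constrained-but-not-doubly-constrained neighbor guaranteed by Lemma \ref{triple three next to two somehow-constrained} contributes precisely the missing $\beta/5$ to land at $ch^*(u)\le 0$; when $k_1\ge 2$ one separately verifies $\alpha-1/2+2\beta/5\le -\beta$ using $\alpha\le 2/9$, $\beta\le 1/9$ to obtain the moreover clause.
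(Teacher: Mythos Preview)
Your proof is correct and follows essentially the same approach as the paper: bound $ch(u)\le (2-\alpha)-3/2$, observe that a triple-three vertex is never a giver under R1 or R2, and then use Lemmas \ref{triple-three next to a double constrained} and \ref{triple three next to two somehow-constrained} to lower $ch^*(u)$ by at least $D(d_1)+S(d_1)$ (respectively $2D(d_1)$ for the moreover clause). The only real difference is presentation: the paper invokes the pre-established relations $D(d_1)+S(d_1)\ge 0.5-\alpha$ and, for $d_1\ge 2$, $D(d_1)\ge S(d_1)+\beta$ to finish in two lines, whereas you unpack these into an explicit case split on $d_1\in\{2,3,\ge4\}$. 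Your observation that the case $d_1=1$ is vacuous (since $c_1(u)\ge 3$ forces $d_1\ge 2$) is a nice addition the paper does not mention.
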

\begin{proof}
A triple-three vertex is not somehow-constrained.
By Lemmas \ref{triple-three next to a double constrained} and \ref{triple three next to two somehow-constrained}, there exists $\{w_1, w_2\} \subset N(u)$ such that $w_1$ is doubly-constrained and $w_2$ is somehow-constrained.

Suppose first that $w_2$ is not doubly-constrained.
While applying the discharging rules, $w_1$ gives $D(d_1)$ charge to $u$ and $w_2$ gives $S(d_1)$ charge to $u$.
Recall that by construction, $S(d_1) + D(d_1) \geq 0.5-\alpha$.
Therefore $ch^*(u) \leq (2-\alpha) - 1.5 - (0.5-\alpha) = 0$.

Now suppose that $w_2$ is doubly-constrained; this will prove the ``moreover'' part of the lemma.
Recall that for $k \geq 2$ we have $D(k) \geq S(k) + \beta$.
During discharging, $u$ is given charge $2D(d_1)$ instead of $D(d_1) + S(d_1)$ as considered before, which proves the lemma.
\end{proof}

We are now ready to finish the proof to Theorem \ref{central goal}.
The proof is immediate by Lemma \ref{get d1=0 out of the way first} if $d_1 = 0$ as $G$ contains at least two vertices, so assume $d_1 \geq 1$ for the rest of the section.
Recall that triple-three is stronger than three-two-two and doubly-constrained is stronger than somehow-constrained.
Every vertex $u$ of $G$ falls into one of these categories:
\begin{enumerate}
	\item $u$ is triple-three, 
	\item $u$ is doubly-constrained,
	\item $u$ is three-two-two, but not triple-three or doubly-constrained, 
	\item $u$ is somehow-constrained, but not three-two-two or doubly-constrained, or
	\item $u$ is not somehow-constrained or three-two-two.
\end{enumerate}
By Corollaries \ref{doubly constrained charge} and \ref{singly constrained charge} and Lemmas \ref{charge for everything else}, \ref{discharge three-two-two}, and \ref{discharge triple three}, $ch^*(u) \leq 0$ for any vertex $u$.
Moreover, by Corollary \ref{singly constrained charge} and Lemma \ref{charge for everything else}, $ch^*(u) \leq -\beta$ if $u$ is not doubly-constrained or three-two-two.
As $-\beta < \rho(G) = \sum_u ch^*(u)$ and $G$ contains at least two vertices, this implies that each vertex $u$ of $G$ is category (1), (2), or (3).

If every vertex is category (1), (2), or (3), then by Lemma \ref{triple-three next to a double constrained} there exists $w \in V(G)$ such that $w$ is doubly-constrained.
By Corollary \ref{doubly constrained charge} $ch^*(w) \leq -\beta$ if $d_1 = 1$, which is a contradiction, and so $d_1 \geq 2$.

By Lemma \ref{discharge three-two-two} and $d_1 \geq 2$, if $u$ is category (3), then $ch^*(u) \leq -\beta$.
So all vertices in $u$ are doubly-constrained or triple-three.
Vertices that are triple-three are not somehow-constrained.
So if $u$ is triple-three, then the two somehow-constrained neighbors of $u$ found in Lemma \ref{triple three next to two somehow-constrained} are doubly-constrained, and so by Lemma \ref{discharge triple three} $ch^*(u) \leq -\beta$.
This contradicts the choice of $G$ as $-\beta < \rho(G) = \sum_u ch^*(u) \leq -\beta$, and it proves Theorem \ref{central goal}.

\section{Algorithm} \label{algorithm section}

Our proof to Theorem \ref{central goal} is constructive, although it is not immediately clear that it leads to a polynomial-time algorithm.
Central to the algorithm is a routine that will find a nontrivial proper subgraph that minimizes potential (we restrict to nonempty subgraphs because the gap lemma requires finding subgraphs $H$ with $0 < \rho(H) < \alpha - \beta$, and the empty subgraph has potential $0$).
For this routine we apply Corollary 2.4 of \cite{CranstonYancey2020} with $m_1 = m_2 = 1$.
In some situations we merely want to confirm whether the assumptions of Theorem \ref{central goal} hold (in other words, whether $\min_H \rho(H) > -\beta$), which can be done using $m_1 = m_2 = 0$.

\begin{theorem}[\cite{CranstonYancey2020}]\label{gap lemma algorithm}
Let $G$ be a weighted graph as described in Section \ref{notation subsection}.
We can find a weighted subgraph that minimizes potential among those that are  nonempty and non-spanning in $O(n^4 \log(n))$ time.
We can find a weighted subgraph of $G$ that minimizes potential in $O(n^2 \log(n))$ time.
\end{theorem}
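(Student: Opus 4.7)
The plan is to reduce directly to Corollary 2.4 of \cite{CranstonYancey2020}, which provides an algorithmic routine for exactly this form of vertex-weight-minus-edge-count minimization. First I would observe that decreasing weights or removing edges from a weighted subgraph only raises the potential, so it suffices to minimize $\rho(G[S])$ over vertex subsets $S \subseteq V(G)$ using the original $W_1, W_2$. Writing $w(v) := \alpha c_1(v) + \beta(c_2(v) - 1)$, the problem becomes that of minimizing $\sum_{v \in S} w(v) - e(G[S])$ over $S \subseteq V(G)$, which is the canonical input format handled by the cited corollary. The weights $w(v)$ are rationals of bounded bit-length, since $\alpha$ and $\beta$ depend only on the fixed parameters $d_1, d_2$ and the $W_i(v)$ are bounded nonnegative integers.

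For the unrestricted $O(n^2 \log n)$ bound, I would apply Corollary 2.4 with $m_1 = m_2 = 0$: no vertex is forced to lie in or out of $S$. Under the hood, the routine reduces the problem to a minimum $s$--$t$ cut in an auxiliary network of size $O(n^2)$ whose min cut encodes the optimal $S$, and a single parametric max-flow computation suffices. For the nonempty, non-spanning variant I would use $m_1 = m_2 = 1$: at least one vertex is forced into $S$ and at least one into $V(G) \setminus S$. The standard way to impose such constraints in a min-cut setup is to contract the forced-in vertex onto the source and the forced-out vertex onto the sink; iterating over the $O(n^2)$ choices of such a pair and taking the overall minimum produces the claimed $O(n^4 \log n)$ bound.

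The only real task, and the one place where care is needed, is to verify that our setup matches the input format of Corollary 2.4 and that the parameter substitutions $m_1 = m_2 = 0$ and $m_1 = m_2 = 1$ correspond to the unrestricted and the nonempty-non-spanning regimes, respectively. Once that translation is made, there is no further algorithmic content to produce; the theorem follows immediately from the cited routine. The reason we need both variants is that applications of the gap lemma (Lemma \ref{gap lemma}) require finding nonempty proper subgraphs $H$ whose potential lies in the window $(-\beta, \alpha - \beta]$ or $(-\beta, 0]$, while other steps in the algorithmic version of Section \ref{reducible configurations section} only require checking that $\min_H \rho(H) > -\beta$ over all weighted subgraphs, which is exactly what the unrestricted query returns.
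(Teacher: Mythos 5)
Your proposal matches the paper's treatment: this theorem is an imported result, justified only by invoking Corollary 2.4 of \cite{CranstonYancey2020} with $m_1=m_2=1$ for the nonempty, non-spanning minimization and $m_1=m_2=0$ for the unrestricted one, together with the observation that it suffices to minimize over induced vertex subsets with the original weights --- exactly your reduction. Your ``under the hood'' sketch of the min-cut machinery is speculation beyond what the paper supplies, but it is harmless since the cited corollary carries the algorithmic content.
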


We will now prove the following theorem.

\begin{theorem}\label{algorithm theorem}
Let $G$ be a weighted graph as described in Section \ref{notation subsection}.
If $\rho(H) > -\beta$ for every weighted subgraph $H$ of $G$, then a desirable coloring of $G$ can be found in $O(n^5 \log(n))$ time.
\end{theorem}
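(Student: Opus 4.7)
The plan is to turn the contradiction-based proof of Theorem \ref{central goal} into a recursive algorithm on $e(G)$, treating each reducible configuration as a rewrite rule and each appeal to minimality/criticality as an inductive call.

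\textbf{Base case.} If $e(G)=0$, every vertex is isolated; by Corollary \ref{capacity sums to at least 2} applied vacuously, one can just place each vertex in its larger-capacity color class. If $G$ has very few edges, handle directly.

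\textbf{Recursive step.} First call the subroutine of Theorem \ref{gap lemma algorithm} with $m_1=m_2=0$ to verify in $O(n^2\log n)$ time that $\rho(H)>-\beta$ for every weighted subgraph $H$ of $G$; the hypothesis ensures this succeeds on the top call and is maintained as an invariant on every recursive call. Then scan $V(G)$ to locate a reducible vertex $u$, following the priority order of Section \ref{reducible configurations section}: degree-1 vertices (Lemma \ref{2 neighbors}), degree-2 vertices that are neither doubly-constrained nor null (Lemma \ref{degree 2}), three-two-two vertices incident to a parallel edge (Lemma \ref{degree 3 not incident with parallel edge}), and three-two-two / triple-three vertices with the wrong neighborhood type (Lemmas \ref{triple-three next to a double constrained}, \ref{triple three next to two somehow-constrained}). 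The discharging count of Section \ref{discharging section}, re-read contrapositively, guarantees that at least one such vertex exists, for otherwise the charge computation would force $\rho(G)\le -\beta$, contradicting the hypothesis.

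\textbf{Reduction and extension.} Having found $u$, build the smaller weighted graph $(G',W_1',W_2')$ prescribed by the corresponding lemma: delete $u$ and, when needed, increment the weights of one or more of its neighbors as in Corollary \ref{add a flag or two}. Check via Theorem \ref{gap lemma algorithm} that $(G',W_1',W_2')$ still satisfies $\rho(H')>-\beta$ for all subgraphs $H'$; the potential arithmetic inside the proof of Corollary \ref{add a flag or two} shows this must hold whenever $\rho>-\beta$ held on $G$, so no backtracking is needed. Recurse to color $(G',W_1',W_2')$ and then extend: most lemmas prescribe a direct color for $u$, while Lemmas \ref{triple-three next to a double constrained} and \ref{triple three next to two somehow-constrained} require first the ``first attempt'' placement (least-used color on $N(u)$) and then an explicit \emph{flipping} of a single neighbor whose slack was consumed. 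Each extension step is $O(n)$.

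\textbf{Complexity and main obstacle.} The hypothesis $\rho(G)>-\beta$ implies $e(G) = O(n)$, so recursion depth is $O(n)$. Each level costs $O(n^4\log n)$ for the gap-lemma subroutine plus $O(n^2)$ for the local scan, giving total time $O(n)\cdot O(n^4\log n)=O(n^5\log n)$, as claimed. The principal obstacle is the maintenance of the sparsity invariant across weight increments: the existential proof dismissed the ``critical subgraph after weight bump'' branch via minimality of $G$, and one must check, case by case through Lemmas \ref{2 neighbors}--\ref{triple three next to two somehow-constrained}, that the identical potential inequalities now certify directly that $\rho(G',W_1',W_2')>-\beta$ continues to hold, so that the recursive call is legitimate rather than merely non-existent by contradiction. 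Once this bookkeeping is in place the algorithm is a straightforward transcription of the structural proof.
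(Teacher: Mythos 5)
There is a genuine gap: your invariant-maintenance claim is not justified, and it hides the one genuinely new ingredient of the paper's algorithm. The reducible-configuration arguments (Corollary \ref{add a flag or two} and Lemmas \ref{2 neighbors}--\ref{triple three next to two somehow-constrained}) do not run on the hypothesis $\rho(H) > -\beta$ alone; they run on the conclusion of the gap lemma, $\rho(H) > \alpha - \beta$ for every nonempty proper weighted subgraph with fewer edges. A graph satisfying only $\rho > -\beta$ may contain a proper subgraph with potential in $(-\beta, \alpha-\beta]$, and then bumping a weight by one (which can drop the potential by up to $\alpha$) can create a subgraph of potential $\le -\beta$ in your reduced graph $(G',W_1',W_2')$, so the recursive call is not legitimate and your ``no backtracking is needed'' assertion fails. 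The paper handles exactly this situation with a separate case that you omit: if some nonempty proper (necessarily non-spanning) subgraph $H$ has $\rho(H) \le \alpha - \beta$, one finds the minimizer with Theorem \ref{gap lemma algorithm} (the $O(n^4\log n)$ routine with $m_1=m_2=1$), colors $H$ first (after raising $W_2$ on a boundary vertex of $H$ when $\rho(H)>0$, exactly as in Case 2 of the proof of Lemma \ref{gap lemma}), builds $G^*$ by adjusting capacities/weights across the boundary, recurses on $G^*$, and glues the two colorings. Only after this case is excluded does the gap property hold, and only then does the potential arithmetic in Section \ref{reducible configurations section} certify that the reduced graphs of the vertex-configuration cases satisfy the hypothesis. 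Note also that this omitted case is where the $O(n^4\log n)$ per-level cost in your complexity count actually comes from; the $m_1=m_2=0$ check you invoke only costs $O(n^2\log n)$ and cannot produce the subgraph $H$ you would need.

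A second, smaller inaccuracy: for the configurations of Lemmas \ref{degree 3 not incident with parallel edge} and \ref{triple three next to two somehow-constrained}, the reduction is not a single ``delete $u$ and bump weights'' graph. Each of these lemmas produces two candidate weighted graphs (for the latter, one of them is obtained by adding the edge $x_2x_3$, not by weight increments), and the proofs only guarantee that at least one of the two satisfies the hypothesis; the algorithm must test both with Theorem \ref{gap lemma algorithm} and recurse on one that passes. Your single-candidate description, combined with the claim that the invariant automatically persists, would leave the algorithm stuck precisely in these cases. With the missing decomposition case restored and the two-candidate selection made explicit, the rest of your outline (priority scan justified by the contrapositive of the discharging count, first-attempt/flip extensions, convexity-based recursion bound) does match the paper's argument.
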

\begin{proof}
Our algorithm breaks into a series of cases.
The cases directly follow the arguments of Section \ref{reducible configurations section}, which we refer to frequently.
We can simplify some of the details for the algorithm, such as a smaller set of assumptions for using the ``flipped coloring'' in cases 6 and 7 below.

The argument presented in Section \ref{discharging section} implies that if $G$ satisfies the assumptions of Theorem \ref{gap lemma algorithm}, then at least one of the cases will apply.
The later cases assume that the earlier cases did not apply.
Our calculation of computational cost includes determining if a given case applies, and the cost to perform the ensuing operations if it does apply.

Let $n = |V(G)|$.
As $\rho(G) > -\beta$ we have that $|E(G)| \leq O(n)$.
We will construct $T: \mathbb{N} \rightarrow \mathbb{N}$ such that a desirable coloring of $G$ can be found in $T(n)$ computations.
We will show that $T(n_*) \leq C n_*^4 \log(n_*) + \max_{n' < n_*}T(n')$ for some fixed constant $C$ and arbitrary $n_*$, which implies the theorem.

\textbf{Case 1:} $G$ is disconnected, an edge has multiplicity at least $3$, $c_i(u) < 0$ for some vertex $u$, or $n \leq 3$.
(This case parallels Section \ref{opening statements section}.)
Removing edges until each has multiplicity at most $2$ can be done in linear time.
Decreasing the weight of each vertex until the capacities are nonnegative can be done in linear time.
If $n \leq O(1)$, then the desirable coloring can be done in $O(1)$ time by trying all $2^n$ possibilities.

Finding the connected components can be done in linear time, and recursing on each connected component can be done in time $O(n) + \max_{k_1 + k_2 + \cdots k_\ell = n} \sum_i T(k_i)$.
Because $T$ is a convex function, this is maximized when $\ell = 1$, which is when $G$ is connected.

\textbf{Case 2:} $G$ contains a nonempty proper weighted subgraph $H$ such that $\rho(H) \leq \alpha - \beta$.
(This case parallels Lemma \ref{gap lemma}.)
As in the proof to Lemma \ref{gap lemma}, such an $H$ will be non-spanning.
So we use Theorem \ref{gap lemma algorithm} to find the $H$ that minimizes potential.
This is done in $O(n^4 \log(n))$ time.
Let $n_H = |V(H)|$.

If $\rho(H) > 0$, then we increase $W_2$ on a vertex in $H$ with a neighbor outside of $H$.
By the proof to Lemma \ref{gap lemma}, $H$ satisfies the assumptions of Theorem \ref{algorithm theorem}.
Therefore we recurse on $H$ to find a desirable coloring of $H$.
This is done in $T(n_H)$ time.

We construct $(G^*, W_1^*, W_2^*)$ as in the proof to Lemma \ref{gap lemma}.
By the proof to Lemma \ref{gap lemma}, we have that $G^*$ satisfies assumptions of Theorem \ref{algorithm theorem}.
Therefore we recurse on $G^*$ to find a desirable coloring of $G^*$.
The coloring of $H$ and the coloring of $G^*$ combine to form a desirable coloring of $G$, finishing the algorithm.
This is done in $O(n) + T(n-n_H)$ time.

The overall running time of this case is then $O(n^4 \log(n)) + \max_{n_H} T(n_H) + T(n-n_H)$.
As $T$ is convex, the maximization occurs when $n_H \leq O(1)$ or $n-n_H \leq O(1)$.
As $H$ is a non-spanning and nonempty, we have that $n_H < n$ and $n-n_H < n$.
Therefore the overall running time of the algorithm in this case is $O(n^4 \log(n)) + T(n-1)$.

\textbf{Case 3:} there exists a vertex $u$ with $|N(u)| = 1$.
(This case parallels Lemma \ref{2 neighbors}.)
Such a vertex can be found in $O(n)$ time.
We recurse on $G$ after removing $u$ and modifying the weights on the neighbor of $u$ if $u$ is in a parallel edge or $i$-null for some $i$.
By the proof to Lemma \ref{2 neighbors}, the smaller graph satisfies the assumptions of Theorem \ref{algorithm theorem}, and we can find a desirable coloring of it by recursing.
The coloring that is returned from the recursive call is easily extended to $G$.
The running time of the algorithm in this case is $O(n) + T(n-1)$.

\textbf{Case 4:} there exists a $u$ with $d(u) = 2$, $\min(c_1(u), c_2(u)) \geq 1$, and $\max(c_1(u), c_2(u)) \geq 2$.
(This case parallels Lemma \ref{degree 2}.)
Such a vertex can be found in $O(n)$ time.
We recurse on $G$ after removing $u$ and modifying the weights on the neighbors of $u$ as described in the proof to Lemma \ref{degree 2}.
By the proof to Lemma \ref{degree 2}, the smaller graph satisfies the assumptions of Theorem \ref{algorithm theorem}, and we can find a desirable coloring of it by recursing.
The coloring that is returned from the recursive call is easily extended to $G$.
The running time of the algorithm in this case is $O(n) + T(n-1)$.

\textbf{Case 5:} there exists a three-two-two $u$ that is in a parallel edge.
(This case parallels Lemma \ref{degree 3 not incident with parallel edge}.)
Let $N^m(u) =  \{v, x, x\}$.
Such a vertex can be found in $O(n)$ time.
As in the proof to Lemma \ref{degree 3 not incident with parallel edge}, we construct weighted graphs $(H, W_1^v, W_2^v)$ and $(H, W_1^x, W_2^x)$ in $O(n)$ time.
By the proof to Lemma \ref{degree 3 not incident with parallel edge}, at least one of $(H, W_1^v, W_2^v)$ or $(H, W_1^x, W_2^x)$ satisfies the assumptions of Theorem \ref{algorithm theorem}.
We use Theorem \ref{gap lemma algorithm} to determine in $O(n^2 \log(n))$ time which of the weighted graphs satisfies the assumptions of Theorem \ref{algorithm theorem}.
We recurse on one of the weighted graphs that satisfies the assumptions of Theorem \ref{algorithm theorem} (and \emph{only} one, even if they both satisfy the assumptions), which will return a coloring that is easily extended to a desirable coloring of $G$.
The running time of the algorithm in this case is $O(n^2 \log(n)) + T(n-1)$.

\textbf{Case 6:} there exists a three-two-two $u$ that has no doubly-constrained neighbor.
(This case parallels Lemma \ref{triple-three next to a double constrained}.)
Such a vertex can be found in $O(n)$ time.
We construct and recurse to find a desirable coloring $V_1' \cup V_2'$ of $(H, W_1', W_2')$ as in the proof to Lemma \ref{triple-three next to a double constrained}.
Let $N(u) = \{x_1, x_2, x_3\}$, and let $j$ be such that $|V_j' \cap N(u)| \geq 2$.
If the following conditions hold for some $i \in \{1, 2, 3\}$
\begin{itemize}
	\item $x_i \notin V_j'$,
	\item $c_j(x_i) > 0$, 
	\item $N(x_i) \cap V_j' = \emptyset$, 
\end{itemize}
then we remove $x_i$ from $V_{3-j}'$ and add it to $V_{j}'$.
This adjustment can be done in $O(n)$ time (verifying whether the conditions hold takes $d(x_i)$ time, and $d(x_i) \leq O(n)$).
By the proof to Lemma \ref{triple-three next to a double constrained}, the coloring then extends to a desirable coloring of $G$ by adding $u$ to $V_{3-j}'$.
The running time of the algorithm in this case is $O(n) + T(n-1)$.

\textbf{Case 7:} there exists a triple-three $u$ with two neighbors that are not somehow-constrained.
(This case parallels Lemma \ref{triple three next to two somehow-constrained}.)
Such a vertex can be found in $O(n)$ time.
As in the proof to Lemma \ref{triple three next to two somehow-constrained}, we construct weighted graphs $(G_1, W_1^{(1)}, W_2^{(1)})$ and $(G_{2,3}, W_1, W_2)$ in $O(n)$ time.
By the proof to Lemma \ref{triple three next to two somehow-constrained}, at least one of $(G_1, W_1^{(1)}, W_2^{(1)})$ and $(G_{2,3}, W_1, W_2)$ satisfies the assumptions of Theorem \ref{algorithm theorem}.
We use Theorem \ref{gap lemma algorithm} to determine in $O(n^2 \log(n))$ time which of the weighted graphs satisfies the assumptions of Theorem \ref{algorithm theorem}.
We recurse on one of the weighted graphs that satisfies the assumptions of Theorem \ref{algorithm theorem} to produce a desirable coloring $V_1' \cup V_2'$.
Let $N(u) = \{x_1, x_2, x_3\}$, and let $j$ be such that $|V_j' \cap N(u)| \geq 2$.
If the following conditions hold for some $i \in \{1, 2, 3\}$
\begin{itemize}
	\item $x_i \notin V_j'$,
	\item $c_j(x_i) > 0$, 
	\item $N(x_i) \cap V_j' = \emptyset$, 
\end{itemize}
then we remove $x_i$ from $V_{3-j}'$ and add it to $V_{j}'$.
This adjustment can be done in $O(n)$ time (verifying whether the conditions hold takes $d(x_i)$ time, and $d(x_i) \leq O(n)$).
By the proof to Lemma \ref{triple three next to two somehow-constrained}, the coloring then extends to a desirable coloring of $G$ by adding $u$ to $V_{3-j}'$.
The running time of the algorithm in this case is $O(n^2 \log(n)) + T(n-1)$.

\end{proof}

\section{An Open Question} \label{conjecture section}

As a graduate student, our first experience with the potential method was an attempt at improving the $d_2 \geq 2d_1 + 2$ assumption in Theorem \ref{Borodin Kostochka theorem}, which is a question we continue to think about many years later.
We use the assumption $d_2 \geq 2d_1 + 2$ in several places to prove Theorem \ref{main theorem}---but not all of those arguments are needed to prove Theorem \ref{Borodin Kostochka theorem}.
The arguments in this paper can be used to create a proof of Theorem \ref{Borodin Kostochka theorem} where the only time $d_2 \geq 2d_1 + 2$ is used is in Fact \ref{basic arithmetic with potential}.6, exclusively to to prove Fact \ref{basic arithmetic with potential}.7, exclusively to prove Corollary \ref{add a flag or two}.II.B, exclusively to prove Lemma \ref{degree 2}.

The constructions of sparse critical graphs in Section \ref{arithmetic and constructions section} do not depend on $d_2 \geq 2d_1 + 2$ (only $d_2 > d_1$), so an improvement to Theorem \ref{Borodin Kostochka theorem} would be sharp.
However, we can show that such constructions are not sharp when $d_2 = d_1 + 1$, and therefore any such improvement to Theorem \ref{Borodin Kostochka theorem} would still require at least $d_2 \geq d_1 + 2$ as an assumption.
We consider a new gadget that we name the \emph{double-pennon}.
It might be viewed as a generalization of the butterfly graph used by Borodin and Kostochka \cite{BorodinKostochka2011} for $(\Delta_0, \Delta_1)$-coloring, although it satisfies weaker properties.

A double-pennon is attached to $u$ by
\begin{enumerate}
	\item adding unweighted vertices $x_*, x_1, \ldots x_{d_2}$ and edges $x_*x_i, ux_i$ for each $i$ and $x_*u$, and then 
	\item adding unweighted vertices $y_*, y_1, \ldots y_{d_2}$ and edges $y_*y_i, x_*y_i$ for each $i$ and $y_*x_*$. 
\end{enumerate}
Adding a double-pennon adds $2(d_2 + 1)$ unweighted vertices and $4d_2 + 2$ edges.
In Theorem \ref{double-pennon} we will show that attaching a double-pennon to $u$ has the same effect as decreasing each of $c_1(u)$ and $c_2(u)$ by one.

When $d_2 = d_1 + 1$, attaching a double-pennon to $u$ can be interpreted as attaching a flag (as mentioned in Section \ref{arithmetic and constructions section}) to $u$ and then attaching a second flag to the center of the first flag.
If $d_2 = d_1 + 1$, then adding a double-pennon changes the potential by of an entire graph by $2(d_1+2)(2 - \alpha) - (4d_1 + 6) = -2\beta$.
As increasing each capacity of $u$ by one increases the potential by $\alpha + \beta > 2\beta$, this implies that the double-pennon can be used to create sparser critical graphs when $d_2 = d_1 + 1$ than those constructed in Section \ref{arithmetic and constructions section}.

\begin{theorem}\label{double-pennon}
Let $d_2 > d_1$ and $x_*, x_1, \ldots x_{d_2}, y_*, y_1, \ldots y_{d_2}$ be the vertex set of a double-pennon attached at $u$ as described above in graph $H$.
In any desirable coloring $V_1 \cup V_2$ of $H$, we have $\{x_*, x_1, \ldots, x_{d_2}\} \cap V_i \neq \emptyset$ for each $i$.
\end{theorem}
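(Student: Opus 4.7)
The plan is to prove the contrapositive: assume that all of $x_*, x_1, \ldots, x_{d_2}$ lie in a single color class $V_j$, and derive a violation of the capacity bound in each of the two cases $j \in \{1,2\}$. All vertices in the double-pennon are unweighted, so $c_i(v) = d_i + 1$ for every vertex $v$ in the pennon and each color $i$. The only condition I expect to use is the degree inequality $|N^m(v) \cap V_j| \leq d_j$ for $v \in V_j$; the forest condition is not needed for the contradiction.

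First I would handle the case $j = 1$. Then $x_*, x_1, \ldots, x_{d_2} \in V_1$, and since $x_*$ is adjacent to each of the $d_2$ vertices $x_1, \ldots, x_{d_2}$, we get $|N^m(x_*) \cap V_1| \geq d_2$. But the desirable-coloring condition demands $|N^m(x_*) \cap V_1| \leq d_1$, which contradicts $d_2 > d_1$.

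Next I would handle the case $j = 2$. By the same adjacency count, $|N^m(x_*) \cap V_2| \geq d_2$, and the capacity constraint $|N^m[x_*] \cap V_2| \leq d_2 + 1$ is already tight using only $\{x_*, x_1, \ldots, x_{d_2}\}$. Hence every remaining neighbor of $x_*$ -- namely $u$, $y_*$, and each $y_i$ -- must lie in $V_1$. Now $y_*$ plays the role $x_*$ played in the first case one level deeper: $y_* \in V_1$ is adjacent to each $y_i \in V_1$, so $|N^m(y_*) \cap V_1| \geq d_2 > d_1$, again contradicting the capacity bound.

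The whole argument is a direct counting step with no serious obstacle; the double-pennon is engineered precisely so that forcing $\{x_*, x_1, \ldots, x_{d_2}\}$ monochromatic either overloads $x_*$ immediately (if color $1$) or propagates color $1$ through the entire $y$-layer and overloads $y_*$ instead (if color $2$). The hypothesis $d_2 > d_1$ is used exactly once in each case, to provide the strict inequality $d_2 \geq d_1 + 1$ that defeats the capacity bound on the center.
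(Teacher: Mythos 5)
Your proof is correct and uses essentially the same elementary degree-counting argument on the gadget as the paper, relying only on the maximum-degree constraints and not the forest condition. The only (cosmetic) difference is in the all-$V_2$ case: the paper first finds a vertex of $V_2$ among $\{y_*, y_1, \ldots, y_{d_2}\}$ and overloads $x_*$ in color $2$ via a $K_{1,d_2+1}$ centered at $x_*$, whereas you saturate $x_*$ in color $2$, force $u$, $y_*$, and all $y_i$ into $V_1$, and overload $y_*$ in color $1$ --- both are equivalent one-line counts.
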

\begin{proof}
Let $V_1 \cup V_2$ be a desirable coloring of $H$.
As $H[V_1]$ has maximum degree at most $d_1 < d_2$ and $H[\{x_*, x_1, \ldots, x_{d_2}\}] \cong K_{1,d_2}$, it follows that $V_2 \cap \{x_*, x_1, \ldots, x_{d_2}\} \neq \emptyset$.
Symmetrically, there exists $\widehat{y} \in V_2 \cap \{y_*, y_1, \ldots, y_{d_2}\}$.
Because $H[\{\widehat{y}, x_*, x_1, \ldots, x_{d_2}\}] \cong K_{1,d_2+1}$ and $H[V_2]$ has maximum degree at most $d_2$, it follows that $V_1 \cap \{x_*, x_1, \ldots, x_{d_2}\} \neq \emptyset$.
\end{proof}

It is unknown if double-pennons are part of an optimal construction of sparse $(\Delta_{d_1}, \Delta_{d_1 + 1})$-critical graph; even the optimal sparsity condition for $(\Delta_1, \Delta_2)$-coloring remains open.
Kostochka, Xu, and Zhu \cite{KostochkaXuZhu2023} recently made progress on $(\Delta_1, \Delta_3)$-coloring, but this is still open as well.
We would like to know if the constructions of Section \ref{arithmetic and constructions section} are optimal when $d_1 + 2 \leq d_2 \leq 2d_1 + 1$.

\begin{ques}
Does there exist a $(\Delta_{d_1}, \Delta_{d_2})$-critical graph $G$ with $d_1 + 2 \leq d_2$ and $\rho(G) > -\beta$?
\end{ques}

\bibliographystyle{alpha}
\bibliography{sparseForests}

\newcommand{\etalchar}[1]{$^{#1}$}
\begin{thebibliography}{AEGW89}

\bibitem[AEGW89]{AkiyamaEraGervacioWatanabe1989}
Jin Akiyama, Hiroshi Era, Severino~V. Gervacio, and Mamoru Watanabe.
\newblock Path chromatic numbers of graphs.
\newblock {\em Journal of Graph Theory}, 13(5):571--573, 1989.

\bibitem[AH77]{AppelHaken1977}
Kenneth Appel and Wolfgang Haken.
\newblock Every planar map is four colorable. part i: Discharging.
\newblock {\em Illinois Journal of Mathematics}, 21:429--490, 1977.

\bibitem[AHK77]{AppelHaken1977_2}
K.~Appel, W.~Haken, and J.~Koch.
\newblock Every planar map is four colorable part ii: Reducibility1.
\newblock {\em Illinois Journal of Mathematics}, 21(3):491 – 567, 1977.

\bibitem[BK11]{BorodinKostochka2011}
O.~Borodin and A.~Kostochka.
\newblock Vertex partitions of sparse graphs into an independent vertex set and
  subgraph of maximum degree at most one.
\newblock {\em Siberian Mathematical Journal}, 52:796--801, 09 2011.

\bibitem[BK14]{BorodinKostochka2014}
O.V. Borodin and A.V. Kostochka.
\newblock Defective 2-colorings of sparse graphs.
\newblock {\em Journal of Combinatorial Theory, Series B}, 104:72--80, 2014.

\bibitem[BKY13]{BorodinKostochkaYancey}
O.V. Borodin, A.~Kostochka, and M.~Yancey.
\newblock On 1-improper 2-coloring of sparse graphs.
\newblock {\em Discrete Mathematics}, 313(22):2638--2649, 2013.

\bibitem[BM79]{BollobasManvel1979}
Béla Bollobás and Bennet Manvel.
\newblock Optimal vertex partitions.
\newblock {\em Bulletin of the London Mathematical Society}, 11(2):113--116,
  1979.

\bibitem[BM85]{BroereMynhardt1984}
I.~Broere and C.~M. Mynhardt.
\newblock Generalized colorings of outerplanar and planar graphs.
\newblock {\em Graph theory with applications to algorithms and computer
  science}, pages 151--161, 1985.

\bibitem[Bor76]{Borodin1976}
Oleg~V. Borodin.
\newblock On decomposition of graphs into degenerate subgraphs.
\newblock {\em Diskretn. Anal. Novosibirsk}, 28:3--12, 1976.
\newblock (in Russian).

\bibitem[CCP21]{ChoChoiPark2021}
Eun-Kyung Cho, Ilkyoo Choi, and Boram Park.
\newblock Partitioning planar graphs without 4-cycles and 5-cycles into bounded
  degree forests.
\newblock {\em Discrete Mathematics}, 344(1):112172, 2021.

\bibitem[CCW86]{CowenCowenWoodall1986}
L.~J. Cowen, R.~H. Cowen, and D.~R. Woodall.
\newblock Defective colorings of graphs in surfaces: Partitions into subgraphs
  of bounded valency.
\newblock {\em Journal of Graph Theory}, 10(2):187--195, 1986.

\bibitem[CDF{\etalchar{+}}23]{CorsiniDeschampsFeghaliGoncalvesLangloisTalon2023}
Timothée Corsini, Quentin Deschamps, Carl Feghali, Daniel Gonçalves, Hélène
  Langlois, and Alexandre Talon.
\newblock Partitioning into degenerate graphs in linear time.
\newblock {\em European Journal of Combinatorics}, 114:103771, 2023.

\bibitem[CR15]{ChoiRaspaud2015}
Ilkyoo Choi and André Raspaud.
\newblock Planar graphs with girth at least 5 are (3,5)-colorable.
\newblock {\em Discrete Mathematics}, 338(4):661--667, 2015.

\bibitem[CRY22]{ChenRaspaudYu2022}
Min Chen, André Raspaud, and Weiqiang Yu.
\newblock An ( f 1 , f 4 )-partition of graphs with low genus and girth at
  least 6.
\newblock {\em Journal of Graph Theory}, 99(2):186--206, 2022.

\bibitem[CY20]{CranstonYancey2020}
Daniel~W. Cranston and Matthew~P. Yancey.
\newblock Sparse graphs are near-bipartite.
\newblock {\em SIAM Journal on Discrete Mathematics}, 34(3):1725--1768, 2020.

\bibitem[CY21]{CranstonYancey2021}
Daniel~W. Cranston and Matthew~P. Yancey.
\newblock Vertex partitions into an independent set and a forest with each
  component small.
\newblock {\em SIAM Journal on Discrete Mathematics}, 35(3):1769--1791, 2021.

\bibitem[CYW18]{ChenYuWang2018}
Min Chen, Weiqiang Yu, and Weifan Wang.
\newblock On the vertex partitions of sparse graphs into an independent vertex
  set and a forest with bounded maximum degree.
\newblock {\em Applied Mathematics and Computation}, 326:117--123, 2018.

\bibitem[DMP17]{DrossMontassierPinlou2015}
François Dross, Mickael Montassier, and Alexandre Pinlou.
\newblock Partitioning a triangle-free planar graph into a forest and a forest
  of bounded degree.
\newblock {\em European Journal of Combinatorics}, 66:81--94, 2017.
\newblock Selected papers of EuroComb15.

\bibitem[DMP18]{DrossMontassierPinlou2018}
François Dross, Mickael Montassier, and Alexandre Pinlou.
\newblock Partitioning sparse graphs into an independent set and a forest of
  bounded degree.
\newblock {\em Electronic Journal of Combinatorics}, 25(1):P1.45, 2018.

\bibitem[Fv24]{FeghaliSamal2024}
Carl Feghali and Robert \v{S}\'amal.
\newblock Decomposing a triangle-free planar graph into a forest and a subcubic
  forest.
\newblock {\em European Journal of Combinatorics}, 116, 2024.

\bibitem[God91]{Goddard1991}
Wayne Goddard.
\newblock Acyclic colorings of planar graphs.
\newblock {\em Discrete Mathematics}, 91(1):91--94, 1991.

\bibitem[HHL23]{HuangHuangLv2023}
Xiaojie Huang, Ziwen Huang, and Jian-Bo Lv.
\newblock Partitioning planar graphs without 4-cycles and 6-cycles into a
  linear forest and a forest.
\newblock {\em Graphs and Combinatorics}, 39(10), 2023.

\bibitem[HPAR23]{digraphs}
Fr{\'e}d{\'e}ric Havet, Lucas Picasarri-Arrieta, and Cl{\'e}ment Rambaud.
\newblock On the minimum number of arcs in 4-dicritical oriented graphs.
\newblock In Dani{\"e}l Paulusma and Bernard Ries, editors, {\em
  Graph-Theoretic Concepts in Computer Science}, pages 376--387, Cham, 2023.
  Springer Nature Switzerland.

\bibitem[JKM{\etalchar{+}}21]{DPcoloring0}
Yifan Jing, Alexandr Kostochka, Fuhong Ma, Pongpat Sittitrai, and Jingwei Xu.
\newblock Defective dp-colorings of sparse multigraphs.
\newblock {\em European Journal of Combinatorics}, 93:103267, 2021.

\bibitem[JKMX22]{DPcoloring}
Yifan Jing, Alexandr Kostochka, Fuhong Ma, and Jingwei Xu.
\newblock Defective dp-colorings of sparse simple graphs.
\newblock {\em Discrete Mathematics}, 345(1):112637, 2022.

\bibitem[KXZ23]{KostochkaXuZhu2023}
Alexandr Kostochka, Jingwei Xu, and Xuding Zhu.
\newblock Sparse critical graphs for defective $(1,3)$-coloring, 2023.

\bibitem[Lov66]{Lovasz1966}
L.~Lov\'{a}sz.
\newblock On decomposition of graphs.
\newblock {\em Studia Sci. Math. Hungar.}, 1:237--238, 1966.

\bibitem[LRSS18]{cocoa}
Carlos V. G.~C. Lima, Dieter Rautenbach, U{\'e}verton~S. Souza, and Jayme~L.
  Szwarcfiter.
\newblock Bipartizing with a matching.
\newblock In Donghyun Kim, R.~N. Uma, and Alexander Zelikovsky, editors, {\em
  Combinatorial Optimization and Applications}, pages 198--213, Cham, 2018.
  Springer International Publishing.

\bibitem[LW22]{LiuWang2022}
Runrun Liu and Weifan Wang.
\newblock A sufficient condition for a planar graph to be (f,f2)-partitionable.
\newblock {\em Discrete Applied Mathematics}, 318:61--68, 2022.

\bibitem[MO13]{MontassierOchem2013}
Micka{\"e}l Montassier and Pascal Ochem.
\newblock Near-colorings: Non-colorable graphs and np-completeness.
\newblock {\em Electron. J. Comb.}, 22:1, 2013.

\bibitem[Poh90]{Poh1990}
K.~S. Poh.
\newblock On the linear vertex-arboricity of a planar graph.
\newblock {\em Journal of Graph Theory}, 14(1):73--75, 1990.

\bibitem[WHF20]{WangHuangFinbow2020}
Yang Wang, Danjun Huang, and Stephen Finbow.
\newblock On the vertex partition of planar graphs into forests with bounded
  degree.
\newblock {\em Applied Mathematics and Computation}, 374:125032, 2020.

\bibitem[ZCW17]{ZhangChenWang2017}
Miao Zhang, Min Chen, and Yiqiao Wang.
\newblock A sufficient condition for planar graphs with girth 5 to be (1,
  7)-colorable.
\newblock {\em Journal of Combinatorial Optimization}, 33, 04 2017.

\end{thebibliography}

\end{document}